\newtheorem{theorem}{Theorem}
\newtheorem{corollary}[theorem]{Corollary}
\newtheorem{lemma}[theorem]{Lemma}
\newtheorem{example}[theorem]{Example}
\theoremstyle{definition}
\newtheorem{definition}[theorem]{Definition}
\theoremstyle{remark}
\DeclareMathOperator{\conv}{conv}
\DeclareMathOperator*{\OPT}{OPT}
\title{Interdicting Structured Combinatorial\\
Optimization Problems with $\{0,1\}$-Objectives}
\author{Stephen R.\ Chestnut\thanks{Department of Mathematics, ETH Z\"urich. E-mail:\texttt{stephenc@ethz.ch}.}
\and Rico Zenklusen\thanks{Department of Mathematics, ETH Z\"urich, and
Department of Applied Mathematics and Statistics,
Johns Hopkins University. E-mail:\texttt{ricoz@math.ethz.ch}.} }
\begin{document}

\maketitle

\begin{abstract}
Interdiction problems ask about the worst-case impact
of a limited change to an underlying optimization problem.
They are a natural way to measure the robustness of
a system, or to identify its weakest spots.
Interdiction problems have been studied for a wide
variety of classical combinatorial
optimization problems, including
maximum $s$-$t$ flows, shortest $s$-$t$ paths,
maximum weight matchings, minimum spanning trees,
maximum stable sets, and graph connectivity.
Most interdiction problems are NP-hard, and furthermore,
even designing efficient approximation algorithms that
allow for estimating the order of magnitude of a
worst-case impact, has turned out to be very difficult.
Not very surprisingly, the few known approximation
algorithms are heavily tailored for specific problems.

Inspired by an approach of Burch
et al.~\cite{burch2003decomposition}, we suggest
a general method to obtain pseudoapproximations
for many interdiction problems.
More precisely, for any $\alpha>0$, our algorithm will
return either a $(1+\alpha)$-approximation, or a solution
that may overrun the interdiction budget by a factor
of at most $1+\alpha^{-1}$ but is also at least
as good as the optimal solution that respects the budget.
Furthermore, our approach can handle submodular interdiction
costs when the underlying problem is to find a maximum
weight independent set in a matroid, as for example the
maximum weight forest problem.
Additionally, our approach can sometimes be refined by
exploiting additional structural properties of the underlying 
optimization problem to obtain stronger results.
We demonstrate this by presenting a PTAS for interdicting
$b$-stable sets in bipartite graphs.

\end{abstract}

\section{Introduction}
\label{sec:intro}

One way to understand the robustness of a system is to evaluate
attack strategies.  
This naturally leads to \emph{interdiction} problems; broadly,
one is given a set of feasible solutions, along with some rules
and a budget for modifying the set, with the goal of inhibiting
the solution to an underlying nominal optimization problem. 
A prominent example that nicely
highlights the nature of interdiction problems is maximum
flow interdiction. Here, the nominal problem is a maximum
$s$-$t$ flow problem. Given is a directed graph
$G=(V,A)$ with arc capacities $u:A\rightarrow \mathbb{Z}_{> 0}$,
a source $s\in V$ and sink $t\in V\setminus \{s\}$.
Furthermore, each arc has an \emph{interdiction cost}
$c:A \rightarrow \mathbb{Z}_{>0}$, and there is a global
\emph{interdiction budget} $B\in \mathbb{Z}_{>0}$.
The goal is to find
a subset of arcs $R\subseteq A$ whose
cost does not exceed the interdiction budget, i.e.,
$c(R):=\sum_{a\in R} c(a) \leq B$, such that the
value of a maximum $s$-$t$ flow in the graph $(V,A\setminus R)$
obtained from $G$ by removing $R$ is as small as possible.
In particular, if the value of a maximum $s$-$t$ flow
in $G=(V,E)$ is denoted by $\nu((V,E))$, then we can 
formalize the problem as follows
\begin{equation*}
\min_{R\subseteq A: c(R)\leq B} \nu((V,E\setminus R)).
\end{equation*}
A set $R\subseteq A$ with $c(R)\leq B$ is often called
an \emph{interdiction or removal set}.
Similarly, one can define interdiction problems for
almost any underlying nominal optimization problem.

Interdiction is of practical interest for evaluating robustness
and developing attack strategies. Indeed, even the discovery
of the famous Max-Flow/Min-Cut Theorem was motivated by a Cold
War plan to interdict the Soviet rail network in Eastern
Europe~\cite{schrijver2002history}.
Interdiction has also been studied to find cost-effective
strategies to prevent the spread of infection in a
hospital~\cite{assimakopoulos1987network}, to determine how
to inhibit the distribution of illegal drugs~\cite{wood1993deterministic},
to prevent nuclear arms smuggling~\cite{pan2003stochastic},
and for infrastructure
protection~\cite{salmeron2009worst,church2004identifying},
just to name a few applications.

A significant effort has been dedicated to understanding
interdiction problems.
The list of optimization problems for which interdiction
variants have been studied includes
maximum flow~\cite{wollmer1964removing,wood1993deterministic,%
phillips1993network,zenklusen2010network},
minimum spanning
tree~\cite{frederickson1996increasing,zenklusen2015approximation},
shortest path~\cite{ball1989finding,khachiyan2008short},
connectivity of a graph~\cite{zenklusen_2014_connectivity},
matching~\cite{zenklusen2010matching,pan2013interdiction},
matroid rank~\cite{joret2012reducing,juttner2006budgeted},
stable set~\cite{bazgan_2011_most},
several variants of facility
location~\cite{church2004identifying,bazgan2013complexity}, and more.

Although one can generate new interdiction problems mechanically
from existing optimization problems, there are few general
techniques for their solution.
The lack of strong exact algorithms for interdiction problems in
not surprising in light of the fact that almost all known
interdiction problems are NP-hard.
However, it is intriguing how little is known about
the approximability of interdiction problems.
In the context of interdiction problems, the design
of approximation algorithms is of particular interest
since it often allows accurate estimation of at least
the order of magnitude of a potential worst-case impact,
which turns out to be a nontrivial task in this context.
Polynomial-time approximation schemes (PTASs) are primarily
known only when assuming particular graph structures or
other special cases. In particular, for planar graphs
PTASs have been found for network flow
interdiction~\cite{phillips1993network,zenklusen2010network}
and matching interdiction~\cite{pan2013interdiction}.
Furthermore, PTASs based on pseudopolynomial algorithms
have been obtained for some interdiction problems on
graphs with bounded
treewidth~\cite{zenklusen2010matching,bazgan_2011_most}.
Connectivity interdiction is a rare exception where a
PTAS is known without any further restrictions on the
graph structure~\cite{zenklusen_2014_connectivity}.
Furthermore, $O(1)$-approximations are known for
minimum spanning
tree interdiction~\cite{zenklusen2015approximation},
and for interdicting a class of packing interdiction
problems which implies an $O(1)$-approximation
for matching interdiction~\cite{dinitz2013packing}.
However, for most classical polynomial-time solvable
combinatorial optimization problems, like shortest paths,
maximum flows and maximum matchings, there
is a considerable gap between the approximation quality of
the best known interdiction algorithm and the currently
strongest hardness result. In particular, among the above-mentioned
problems, only the interdiction of shortest $s$-$t$ paths is
known to be APX-hard, and matching interdiction is the only
one among these problems for which
an $O(1)$-approximation is known.
For network
flow interdiction, no approximation results are known, even though
only strong NP-hardness is known from a complexity point of view.

Burch et al.~\cite{burch2003decomposition} decided to go
for a different approach to attack the network flow interdiction
problem, leading to the currently best known solution guarantee
obtainable in polynomial time.
Their algorithm solves a linear programming (LP)
relaxation to find a fractional
interdiction set that lies on an edge of an integral polytope.  
It is guaranteed that, for any $\alpha >0$, one of the vertices
on that edge is either a budget feasible $(1+\alpha)$-approximate
solution or a super-optimal solution that overruns the budget by
at most a factor of $1+1/\alpha$.
However, one cannot predetermine
which objective is approximated and the choice
of $\alpha$ biases the outcome.
For simplicity we call such an algorithm a $2$-pseudoapproximation
since, in particular, by choosing $\alpha=1$ one either gets a
$2$-approximation or a super-optimal solution using at most
twice the budget.
In this context, it is also common to use the notion
of a $(\sigma, \tau)$-approximate solution, for $\sigma, \tau \geq 1$.
This is a solution that violates the budged constraint
by a factor of at most $\tau$, and has a value that is
at most a factor of $\sigma$ larger than the value of an optimal
solution, which is not allowed to violate the budget.
Hence, a $2$-pseudoapproximation is an algorithm that,
for any $\alpha >0$, either returns a
$(1+\alpha, 1)$-approximate solution or a
$(1,1+1/\alpha)$-approximate solution.

\medskip

The main result of this paper is a general technique
to get $2$-pseudoapproximations for a wide set of interdiction
problems.
To apply our technique we need three conditions on the nomial
problem we want to interdict. First, we need to have an 
LP description of the nomial problem that has
a well-structured dual.
In particular, box-total dual integrality (box-TDI)
is sufficient. The precise conditions are described
in Section~\ref{sec:settingRes}.
Second, the LP description of the nomial problem is a maximization
problem whose objective vector only has $\{0,1\}$-coefficients.
Third, the LP description of the nomial problem fulfills
a down-closedness property, which we call \emph{$w$-down-closedness}.
This third condition is fulfilled by all independence systems,
i.e., problems where a subset of a feasible solution is also
feasible, like forests, and further problems like maximum
$s$-$t$ flows. Again, a precise description is given
in Section~\ref{sec:settingRes}.
In particular, our framework leads to $2$-pseudoapproximations
for the interdiction of any problem that asks
to find a maximum cardinality set in an independence system
for which a box-TDI description exists.
This includes maximum cardinality independent set in a
matroid, maximum cardinality common independent set in
two matroids, $b$-stable sets in bipartite graphs,
and more. Furthermore, our conditions also include
the maximum $s$-$t$ flow problem, thus implying the
result of Burch et al.~\cite{burch2003decomposition},
even though $s$-$t$ flows do not form an
independence system.
Apart from its generality, our approach has further
advantages.
When interdicting independent sets of a matroid, we can
even handle general nonnegative objective functions,
instead of only $\{0,1\}$-objectives. This is obtained
by a reformulation of the weighted problem to a 
$\{0,1\}$-objective problem over a polymatroid.
Also, we can get a $2$-pseudoapproximation for
interdicting maximum weight independent sets in a
matroid with \emph{submodular} interdiction costs.
Submodular interdiction costs allow for modeling
economies of scale when interdicting. More precisely,
the cost of interdicting an additional element is the
smaller the more elements will be interdicted. 
Additionally, our approach can sometimes be refined by
exploiting additional structural properties of the underlying 
optimization problem to obtain stronger results.
We demonstrate this by presenting a PTAS for interdicting
$b$-stable sets in bipartite graphs, which is
an NP-hard problem.
We complete the discussion of $b$-stable set interdiction
in bipartite graphs by showing that interdicting classical
stable sets in bipartite graphs, which
are $1$-stable sets, can be done efficiently by a
reduction to matroid intersection.
This generalizes a result by Bazgan, Toubaline and
Tuza~\cite{bazgan_2011_most} who showed that interdiction
of stable sets in a bipartite graph is polynomial-time solvable 
if all interdiction costs are one.

\subsubsection*{Organization of the paper}

In Section~\ref{sec:settingRes},
we formally describe the class of interdiction problems
we consider, together with the technical assumptions
required by our approach, to obtain a
$2$-pseudoapproximation. Furthermore,
Section~\ref{sec:settingRes} also contains a 
formal description of our results.
Our general approach to obtain $2$-pseudoapproximations
for a large set of interdiction problems is described
in Section~\ref{sec:2pseudoapprox}.
In Section~\ref{sec:matroidInt} we show how, in the
context of interdicting independent sets in a matroid,
our approach allows for getting
a $2$-approximation for general nonnegative weights
and submodular interdiction costs.
Section~\ref{sec:bIndep} shows how our approach can
be refined for the interdiction of $b$-stable set
interdiction in bipartite graphs to obtain a PTAS.
Furthermore, we also present an efficient algorithm
for stable set interdiction in bipartite graphs
in Section~\ref{sec:bIndep}.

\section{Problem setting and results}
\label{sec:settingRes}

We assume that feasible solutions to the nominal
problem, like matchings or $s$-$t$ flows,
can be described as follows. There is a finite
set $N$, and the feasible solutions can be described
by a bounded and nonempty
set $\mathcal{X}\subseteq \mathbb{R}^N_{\geq 0}$
such that $\conv(\mathcal{X})$
is an integral polytope\footnote{The discussion that follows
also works for feasible sets $\mathcal{X}$ such that 
$\conv(\mathcal{X})$ is not integral. However, integrality of
$\mathcal{X}$ simplifies parts of our discussion and
is used to show that our $2$-pseudoapproximation is efficient.
Furthermore, all problems we consider naturally have the
property that $\conv(\mathcal{X})$ is integral.
}%
.
For example, for matchings we can choose $N$ to
be the edges of the given graph $G=(V,E)$, and
$\mathcal{X} \subseteq \{0,1\}^E$ are all
characteristic vectors of matchings $M\subseteq E$
in~$G$.
Similarly, consider the maximum $s$-$t$ flow
problem on a directed graph $G=(V,A)$, with
edge capacities $u:A\rightarrow \mathbb{Z}_{> 0}$.
Here, we can choose $N=A$ and
$\mathcal{X}\subseteq \mathbb{R}_{\geq 0}^N$ contains
all vectors $f\in \mathbb{R}_{\geq 0}^N$ that
correspond to $s$-$t$ flows.

Furthermore, the nominal problem should be solvable by maximizing a linear function $w$
over $\mathcal{X}$.
For the case of maximum cardinality matchings one can maximize
the linear function with all coefficients being equal to $1$.
Finally, we assume that we interdict elements of the ground
set $N$, and the interdiction problem can be described by
the following min-max mathematical optimization problem:
\begin{equation}
\begin{array}{>{\displaystyle}rr@{\;\;}c@{\;\;}ll}
\min_{\substack{R\subseteq N:\\ c(R)\leq B}} \max &w^T x &    &  &\\[-1.2em]
                         &    x &\in & \mathcal{X} & \\
                         &  x(e)&  = & 0 & \forall e\in R,
\end{array}
\label{eq:initProb}
\end{equation}
where $c:N\rightarrow \mathbb{Z}_{> 0}$
are interdiction costs on $N$, and $B\in \mathbb{Z}_{> 0}$
is the interdiction budget.
It is instructive to consider matching interdiction
where one can choose $N$ to be all edges and
$\mathcal{X}\subseteq \{0,1\}^N$ the characteristic
vectors of matchings. Imposing $x(e)=0$ then enforces
that one has to choose a matching that does not contain
the edge $e$ which, as desired, corresponds to interdicting $e$.
 
Notice that the above way of describing interdiction problems
is very general. In particular, it contains a large set
of classical combinatorial interdiction problems,
like interdicting maximum $s$-$t$ flows, maximum matchings,
maximum cardinality stable sets of a graph,
maximum weight forest, and more generally, maximum
weight independent set in a matroid or the intersection
of two matroids.

Our framework for designing $2$-pseudoapproximations
for interdiction problems of type~\eqref{eq:initProb}
requires the following three properties, on which we
will expand in the following:

\begin{enumerate}[(i)]
\item\label{item:01vec}
The objective vector $w$ is a $\{0,1\}$-vector,
i.e., $w\in \{0,1\}^N$,

\item\label{item:wDownClosed}
the feasible set $\mathcal{X}$ is
\emph{$w$-down-closed}, which is a weaker
form of down-closedness that we introduce below, and

\item\label{item:wDIsolvable}
there is a linear description of the
convex hull $\conv(\mathcal{X})$ of $\mathcal{X}$
which is \emph{box-$w$-DI solvable}.
This is a weaker form of being box-TDI equipped
with an oracle that returns an integral dual
solution to box-constrained linear programs over the
description of $\conv(\mathcal{X})$.
\end{enumerate}

In the following we formally define the second
and third condition, by giving precise definitions
of $w$-down-closedness and box-$w$-DI solvability.
In particular, condition~\eqref{item:wDIsolvable},
i.e., box-$w$-DI solvability, describes how we can
access the nominal problem.

\subsection{$w$-down-closedness}

The notion of $w$-down-closedness is a weaker
form of down-closedness. We recall that a
set $\mathcal{X}\subseteq \mathbb{R}^N_{\geq 0}$
is down-closed if for any $x\in \mathcal{X}$
and $y\in \mathbb{R}^N_{\geq 0}$ with 
$y\leq x$ (componentwise), we have $y\in \mathcal{X}$.
Contrary to the usual notion of down-closedness,
$w$-down-closedness depends on the $\{0,1\}$-objective
vector $w$.

\begin{definition}[$w$-down-closedness]
\label{def:wDownClosed}
Let $w\in \{0,1\}^N$. 
$\mathcal{X}\subseteq \mathbb{R}^N_{\geq 0}$ is
\emph{$w$-down-closed} if for every $x\in \mathcal{X}$ and
$e\in N$ with $x(e)>0$, there exists $x' \leq x$ such that
the following conditions hold:
\begin{enumerate}[(i)]
\item $x'\in \mathcal{X}$;
\item $x'(e) = 0$;
\item\label{defitem:wDownRed} $w^T x' \geq w^T x - x(e)$.
\end{enumerate}
\end{definition}

Notice that if $\mathcal{X}\subseteq \mathbb{R}_{\geq 0}^N$
is down-closed, then it is
$w$-down-closed for any $w\in \{0,1\}^N$, since
one can define $x'\in \mathcal{X}$ in the above
definition by
$x'(f)=x(f)$ for $f\in N\setminus \{e\}$
and $x'(e)=0$.
Similarly, $w$-down-closedness also includes all
independence systems. We recall that an
\emph{independence system} 
over a ground set $N$ is a family $\mathcal{F}\subseteq 2^N$
of subsets of $N$ such that for any $I\in \mathcal{F}$
and $J\subseteq I$, we have $J\in \mathcal{F}$. In
other words, it is closed under taking subsets.
Typical examples of independence systems include
matchings, forests and stable sets.
Naturally, an independence
system $\mathcal{F}\subseteq 2^N$ can be represented
in $\mathbb{R}^N_{\geq 0}$ by its characteristic vectors,
i.e., $\mathcal{X}=\{\chi^I \mid I\in \mathcal{F}\}$,
where $\chi^I\in \{0,1\}^N$ denotes the characteristic
vector of $I$.
Clearly, for the same reasons as for down-closed sets,
the set $\mathcal{X}$ of characteristic vectors of any
independence system is $w$-down-closed for any
$w\in \{0,1\}^N$.

Hence, many natural combinatorial optimization problems
are $w$-down-closed for any $w\in \{0,1\}^N$,
including matchings, stable sets,
independent sets in a matroid or the intersection
of two matroids. 
Furthermore, $w$-down-closedness also captures the
maximum $s$-$t$ flow problem, and a generalization
of it,
known as \emph{polymatroidal network flows},
that was introduced independently by
Hassin~\cite{hassin_1978_network}
and Lawler and Martel~\cite{lawler_1982_computing}.
Loosely speaking, polymatroidal network flows correspond
to classic flows with, for every vertex, the addition of
submodular packing constraints on the incoming arcs as well as the outgoing ones.
See~\cite{lawler_1982_computing} for a formal definition.

\begin{example}[$w$-down-closedness of $s$-$t$
flow polytope]
Let $G=(V,A)$ be a directed graph with two
distinct vertices $s,t\in V$ and arc
capacities $u:A \rightarrow \mathbb{Z}_{>0}$.
Furthermore, we assume that there are
no arcs entering the source $s$, since
such arcs can be deleted when seeking
maximum $s$-$t$ flows.
The $s$-$t$ flow polytope
$\mathcal{X}\subseteq \mathbb{R}^A_{\geq 0}$
can then be described as follows (see,
e.g.,~\cite{korte_2012_combinatorial}):
\begin{equation*}
\mathcal{X} = \left\{x\in \mathbb{R}^A_{\geq 0} \mid
x(\delta^+(v))-x(\delta^-(v)) = 0 \;\forall v\in V\setminus \{s,t\}
\right\},
\end{equation*}
where $\delta^+(v),\delta^-(v)$ denote the
set of arcs going out of $v$ and entering
$v$, respectively; furthermore,
$x(U):=\sum_{a\in U} x(a)$ for $U\subseteq A$.
A maximum $s$-$t$ flow can be found by
maximizing the linear function
$w^T x$ over $\mathcal{X}$, where $w=\chi^{\delta^+(s)}$,
i.e., $w\in \{0,1\}^A$ has a $1$-entry for each
arc $a\in \delta^+(s)$, and $0$-entries for all
other arcs. This maximizes the total outflow
of $s$.
Notice that the value of a flow $x\in \mathcal{X}$ is
equal to $x(\delta^+(s))-x(\delta^-(s))=x(\delta^+(s))$,
since there are no arcs entering $s$; this is indeed
the total outflow of $s$.

To see that $\mathcal{X}$ is $w$-down-closed,
let $x\in \mathcal{X}$ and $e\in A$, and we construct
$x'\in \mathcal{X}$ satisfying the conditions
of Definition~\ref{def:wDownClosed} as follows.
We compute a path-decomposition of $x$
with few terms.
This is a family of $s$-$t$ paths
$P_1,\dots, P_k \subseteq A$ with $k \leq |A|$
together with positive coefficients
$\lambda_1,\dots, \lambda_k >0$ such that
$x=\sum_{i=1}^k \lambda_i \chi^{P_i}$
(see~\cite{ahuja_1993_network} for more details).
Let $I=\{i\in [k] \mid e \in P_i\}$, where
$[k]:=\{1,\dots, k\}$,
and we set $x'=\sum_{i\in [k]\setminus I}\lambda_i \chi^{P_i}$.
The flow $x'\in \mathcal{X}$ indeed satisfies
the conditions of Definition~\ref{def:wDownClosed}.
This follows from the fact that 
$x(e)=\sum_{i\in I} \lambda_i$, and
each path $P_i$ contains precisely
one arc of $\delta^+(s)$, hence,
$x'(\delta^+(s))=x(\delta^+(s)) - \sum_{i\in I} \lambda_i$.
\end{example}

The polytope that corresponds to polymatroidal network
flows (see~\cite{lawler_1982_computing}),
is an $s$-$t$ flow polytope
with additional packing constraints. Its $w$-down-closedness
follows therefore from the $w$-down-closedness of the
$s$-$t$ flow polytope.

\medskip

Furthermore, notice that a non-empty $w$-down-closed system $\mathcal{X}$
always contains the zero vector, independent of $w\in \{0,1\}^N$.
By $w$-down-closedness we can go through all elements $e\in N$
one-by-one, and replace $x$ by a vector $x'\in \mathcal{X}$
with $x'(e)=0$, thus proving that the zero vector is in $\mathcal{X}$.

\subsection{box-$w$-DI solvability}

To obtain $2$-pseudoapproximations for interdiction
problems of type~\eqref{eq:initProb}, we additionally need
to have a good description of the convex hull $\conv(\mathcal{X})$
of $\mathcal{X}$. The type of description we need is 
a weaker form of box-TDI-ness together with an efficient
optimization oracle for the dual that returns integral solutions,
which we call box-$w$-DI solvability, where ``DI'' stands for
`dual integral''.

\begin{definition}[box-$w$-DI solvability]
A description
$\{x\in \mathbb{R}^N \mid Ax\leq b, x\geq 0\}$
of a nonempty polytope $P$ is \emph{box-$w$-DI solvable}
for some vector $w\in \{0,1\}^N$ if the following
conditions hold:
\begin{enumerate}[(i)]
\item\label{item:dualInt}
For any vector $u\in \mathbb{R}^N_{\geq 0}$,
the following linear program has an integral dual
solution if it is feasible:
\begin{equation}
\begin{array}{>{\displaystyle}rr@{\;\;}c@{\;\;}l}
\max &w^T x    &     & \\
     &A x      &\leq &b \\
     &x        &\leq &u \\
     &x        &\geq &0 \\
\end{array}
\label{eq:box-LP}
\end{equation}
Notice that the dual of the above LP is the following
LP:
\begin{equation}
\begin{array}{>{\displaystyle}rr@{\;}r@{\;}r@{\;\;}c@{\;\;}l}
\min          &b^T y  &+& u^T r &     &  \\
              &A^T y  &+&     r &\geq &w \\
              &y      & &       &\geq &0 \\
              &       & &     r &\geq &0 \\
\end{array}
\label{eq:box-DP}
\end{equation}

\item\label{item:dualOracle}
For any $u\in \mathbb{R}^N_{\geq 0}$,
one can decide in polynomial time whether~\eqref{eq:box-LP}
is feasible. Furthermore, if~\eqref{eq:box-LP} is feasible,
one can efficiently compute its objective value
and an integral vector
$r\in \mathbb{Z}^N_{\geq 0}$ that corresponds
to an optimal integral solution to~\eqref{eq:box-DP}, i.e.,
there exists an integral vector $y$ such that
$y,r$ is an integral optimal solution to~\eqref{eq:box-DP}.
\end{enumerate}

\end{definition}

We emphasize that box-$w$-DI solvability does
not assume that the full system
$Ax \leq b , x\geq 0$ is given as input. In particular, this is
useful when dealing with combinatorial problems
whose feasible set
$\mathcal{X}\subseteq \mathbb{R}^N_{\geq 0}$ is such that
the polytope $\conv(\mathcal{X})$ has an exponential number
of facets, and a description of $\conv(\mathcal{X})$
therefore needs an exponential number of
constraints\footnote{In some cases one can get around this
problem by using an \emph{extended formulation}. This is
a lifting of a polytope in a higher dimension with the
goal to obtain a lifted polytope with an inequality
description of only polynomial size
(see~\cite{kaibel_2011_extended,conforti_2013_extended}).}.
Since the only access to $\mathcal{X}$ that we need is
an oracle returning an optimal integral dual solution
to~\eqref{eq:box-DP}, we can typically deal
with such cases if we have an implicit description of
the system $Ax \leq b, x\geq 0$ over which we can
separate with a separation oracle.

Furthermore, notice that condition~\eqref{item:dualInt}
of box-$w$-DI solvability is a weaker form of
box-TDIness due to two reasons. First, our objective vector
$w\in\{0,1\}^N$ is fixed, whereas in box-TDIness, dual
integrality has to hold for any integral objective vector.
Second, when dealing with box-TDIness, one can additionally
add lower bounds $x\geq \ell$ on $x$ in \eqref{eq:box-LP}, still
getting a linear program with an optimal integral
dual solution.

We even have box-TDI descriptions for all problems we discuss here.
The only additional property needed for a box-TDI system
to be box-$w$-DI solvable, is that one can efficiently
find an optimal integral dual solution. However, such
procedures are known for essentially all classical
box-TDI systems.
In particular, this applies to the classical polyhedral
descriptions of the independent sets of a matroid or the
intersection of two matroids, stable sets in bipartite
graphs, $s$-$t$ flows, and any problem whose constraint
matrix can be chosen to be totally unimodular (TU)
and of polynomial size.

Since our only access to the feasible set is via
the oracle guaranteed by box-$w$-DI solvability,
we have to be clear about what we consider to be
the input size when talking about polynomial time
algorithms.
In addition to the binary encodings
of $B$, $c$, we also
assume that the binary encodings of the
optimal value of~\eqref{eq:box-DP} and
the integral optimal vector $r\in\mathbb{Z}^N_{\geq 0}$
returned by the box-$w$-DI oracle are part of
the input size.
This implies that in particular, the
binary encoding 
of $\nu^* = \max\{w^T x \mid Ax \leq b, x\geq 0\}$
is part of the input size.

\subsection{Our results}

The following theorem summarizes our main result for
obtaining $2$-pseudoapproximations.

\begin{theorem}\label{thm:2pseudoapprox}
There is an efficient $2$-pseudoapproximation for
any interdiction problem of type~\eqref{eq:initProb}
if the following conditions are satisfied:
\begin{enumerate}[(i)]
\item The objective function $w$ is a $\{0,1\}$-vector,
i.e., $w\in \{0,1\}^N$,

\item the description of the feasible set
$\mathcal{X}\subseteq \mathbb{R}^N$ is $w$-down-closed, and

\item there is a box-$w$-DI solvable description of
$\conv(\mathcal{X})$.
\end{enumerate}
\end{theorem}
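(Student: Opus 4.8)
The plan is to lift the decomposition idea of Burch et al.\ to the abstract level, using the three hypotheses to manufacture a suitable linear program. First I would dualize the inner maximization. For a fixed removal set $R$, the inner problem in \eqref{eq:initProb} is $\nu^*(R):=\max\{w^Tx : x\in\conv(\mathcal X),\ x(e)=0\ \forall e\in R\}$, which by integrality of $\conv(\mathcal X)$ equals the maximum over the corresponding face; writing it as the box program \eqref{eq:box-LP} with $u(e)=0$ for $e\in R$ and $u(e)=\kappa(e):=\max\{x(e):x\in\conv(\mathcal X)\}$ otherwise, box-$w$-DI solvability supplies strong duality and an integral optimal dual of \eqref{eq:box-DP}. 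Since the outer problem is also a minimization, the two minimizations collapse into one, over the dual variables $(y,r)$ jointly with the choice of which elements to interdict.

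Second, I would pass to a linear relaxation of this merged minimization. Keeping the full box weights $u=\kappa$ and introducing a removal variable $\gamma$ that cancels the contribution of interdicted elements, the relaxation reads
\begin{equation*}
\min\{\, b^Ty + \textstyle\sum_e\kappa(e)\,(r(e)-\gamma(e)) \ :\ A^Ty+r\ge w,\ 0\le\gamma\le r,\ y,r\ge0,\ c^T\gamma\le B\,\}.
\end{equation*}
For every integral feasible point this objective equals $\nu^*(\operatorname{supp}(\gamma))$ by strong duality of the box program, while $c^T\gamma=c(\operatorname{supp}(\gamma))$, so the integral optimum is exactly $\OPT$ and the relaxation value is at most $\OPT$. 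Here $w$-down-closedness (property~\eqref{defitem:wDownRed} of Definition~\ref{def:wDownClosed}) is what makes the model faithful: forcing $x(e)=0$ lowers the optimum by at most $\kappa(e)$, so charging exactly $\kappa(e)$ per removed unit neither over- nor under-counts; box-$w$-DI solvability then guarantees that, after deleting the single budget row $c^T\gamma\le B$, the remaining polyhedron is integral and that we can optimize over it through oracle access to \eqref{eq:box-DP} even when $\conv(\mathcal X)$ has exponentially many facets.

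Third comes the structural heart. Let $P_0$ be the polyhedron obtained by dropping the budget row, so we optimize over $P_0\cap\{c^T\gamma\le B\}$. An optimal vertex of this intersection is either a vertex of $P_0$ — then it is integral and yields a budget-feasible set of value exactly $\OPT$, a $(1,1)$-solution — or it lies on an edge of $P_0$ and is thus a convex combination $v^\star=\mu v_1+(1-\mu)v_2$ of two integral vertices of $P_0$ with the budget tight, $c^T\gamma(v^\star)=B$. From $v_1,v_2$ I read off interdiction sets $R_1,R_2$ and compute $O_i:=\nu^*(R_i)$ via the oracle; since $\mu\,c(R_1)+(1-\mu)\,c(R_2)=B$, one set, say $R_1$, is budget-feasible and the other, $R_2$, exceeds $B$.

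Finally I would run the two-solution trade-off. Using $\mu O_1+(1-\mu)O_2\le\OPT$ with $O_1\ge\OPT$ (as $R_1$ is feasible) and $O_2,c(R_1)\ge0$ gives $O_1\le\OPT/\mu$ and $O_2\le\OPT$, while $(1-\mu)c(R_2)\le B$ gives $c(R_2)\le B/(1-\mu)$; hence $R_1$ is $(1/\mu,1)$- and $R_2$ is $(1,1/(1-\mu))$-approximate. For a given $\alpha>0$, a case split on whether $\mu\ge\tfrac1{1+\alpha}$ returns either $R_1$ as a $(1+\alpha,1)$-solution or $R_2$ as a $(1,1+\tfrac1\alpha)$-solution, which is exactly a $2$-pseudoapproximation. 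I expect the main obstacle to lie in the second and third steps: arranging the relaxation so that its budget-free polyhedron is genuinely integral while modelling the inherently bilinear coupling between ``is $e$ interdicted'' and the dual weight $r(e)$ by the linear difference $r-\gamma$ — precisely the point where both $w$-down-closedness and box-$w$-DI solvability are indispensable — and, for efficiency, extracting the optimal fractional vertex together with its two integral edge-endpoints from oracle access alone, where integrality of $\mathcal X$ is used.
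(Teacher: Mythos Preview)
Your overall architecture---merge the two minimizations, relax the budget, extract two integral points straddling $B$, and run the convex-combination trade-off---matches the paper's. But the proposal has a real gap at the step you yourself flag as ``the main obstacle.''

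You assert that, after deleting the budget row, the polyhedron $P_0=\{(y,r,\gamma): A^Ty+r\ge w,\ 0\le\gamma\le r,\ y,r\ge0\}$ is integral, so that an optimal vertex of $P_0\cap\{c^T\gamma\le B\}$ lies on an edge of $P_0$ with two \emph{integral} endpoints. Box-$w$-DI solvability does not give you this. It only guarantees that, for every $u\ge0$, the LP $\min\{b^Ty+u^Tr: A^Ty+r\ge w,\ y,r\ge0\}$ has an integral \emph{optimum}; the $b$-part of the objective is fixed, so this is much weaker than integrality of the dual polyhedron, let alone of your $P_0$ with the extra $\gamma$-layer. Without integrality of $P_0$, your edge-endpoint extraction and the downstream inequalities (``$c^T\gamma=c(\operatorname{supp}\gamma)$'', ``objective at $v_i$ equals $\nu^*(R_i)$'') are unjustified. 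Relatedly, your use of $w$-down-closedness is only heuristic (``charging $\kappa(e)$ per removed unit neither over- nor under-counts''); nothing in the LP enforces that relation.

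The paper circumvents both issues. First, $w$-down-closedness is used precisely (Lemmas~\ref{lem:intObj} and~\ref{lem:clipR}) to show that interdicting $e$ is equivalent to replacing the objective $w$ by $w-\chi^{\{e\}}$; since $w\in\{0,1\}^N$, this eliminates the $\kappa$-bounds entirely and yields the clean relaxation~\eqref{eq:dualLPfull}. Second, instead of relying on integrality of a polyhedron, the paper Lagrangian-relaxes the budget with multiplier $\lambda$: for each fixed $\lambda\ge0$, LP$(\lambda)$ is exactly the dual of the box program~\eqref{eq:box-LP} with $u=\lambda c$, so box-$w$-DI \emph{does} supply an integral optimum. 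A bisection on $\lambda$ (with bounds and segment widths from Lemmas~\ref{lem:LSBounded} and~\ref{lem:segmentsAreLarge}) produces $\lambda^*$ and two integral $r^1,r^2$ straddling $B$, and Theorem~\ref{thm:oneRIsGood} finishes with the same trade-off you sketch. Thus the Lagrangian route is not a stylistic choice: it is exactly what lets the weak box-$w$-DI hypothesis do the work that your $P_0$-integrality claim cannot.
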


Using well-known box-TDI description of classical
combinatorial optimization
problems~(see~\cite{schrijver2003combinatorial}),
Theorem~\ref{thm:2pseudoapprox} leads to
$2$-pseudoapproximations for the interdiction of
many combinatorial optimization problems.

\begin{corollary}
There is a $2$-pseudoapproximation for interdicting
maximum cardinality independent sets of a matroid
or the intersection of two matroids, maximum
$s$-$t$ flows, and maximum polymatroidal network flows.
Furthermore, there is
a $2$-pseudoapproximation for all problems where
a maximum cardinality set has to be found with
respect to down-closed constraints captured by a TU matrix.
For example, this includes maximum $b$-stable
sets in bipartite graphs. 
\end{corollary}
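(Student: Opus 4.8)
The plan is to derive the corollary directly from Theorem~\ref{thm:2pseudoapprox} by verifying, for each listed problem, that its natural formulation meets the three hypotheses. The first two conditions require essentially no work in every case. For the cardinality problems---maximum cardinality independent set of a matroid or of the intersection of two matroids, and maximum cardinality sets under down-closed TU constraints (including $b$-stable sets in bipartite graphs)---the objective is $w=\mathbf{1}$, so $w\in\{0,1\}^N$; for maximum $s$-$t$ flows and for polymatroidal network flows the objective is $w=\chi^{\delta^+(s)}\in\{0,1\}^A$. Condition (ii) is equally immediate: independence systems and, more generally, down-closed feasible sets are $w$-down-closed for every $w\in\{0,1\}^N$, as observed after Definition~\ref{def:wDownClosed}, while the $w$-down-closedness of the $s$-$t$ flow polytope was established in the Example and extends to polymatroidal flows, since these only add packing constraints, as noted in the text following the Example.

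The only substantive task is therefore to exhibit, in each case, a box-$w$-DI solvable description of $\conv(\mathcal{X})$. Here I would invoke the classical fact that all of these polytopes admit box-TDI descriptions: the matroid and matroid-intersection polytopes via Edmonds' rank inequalities, the flow polytope via its (network-)TU constraint matrix, the polymatroidal flow polytope via its submodular-plus-flow description, and the general TU case directly, since every system $Ax\le b,\ x\ge 0$ with $A$ totally unimodular is box-TDI. Because box-$w$-DI solvability only demands dual integrality for the single fixed vector $w$, and without lower bounds on $x$, box-TDI-ness already supplies the dual-integrality part of the definition for free.

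The main obstacle is the remaining requirement of box-$w$-DI solvability: a polynomial-time oracle that decides feasibility of~\eqref{eq:box-LP} and returns an integral optimal dual vector $r$, especially for the matroid and matroid-intersection polytopes, whose rank-based descriptions have exponentially many facets and so cannot be handed to a generic LP solver. For the TU cases (flows and down-closed TU constraints) this is routine:~\eqref{eq:box-LP} has polynomial size, is always feasible because the nonempty $w$-down-closed set $\mathcal{X}$ contains the zero vector (as shown at the end of the $w$-down-closedness subsection), and its dual~\eqref{eq:box-DP} has a totally unimodular constraint matrix, hence an integral optimal vertex computable in polynomial time. For the matroid cases I would instead rely on the combinatorial algorithms behind max-weight matroid optimization: the box-constrained primal~\eqref{eq:box-LP} is itself a capacitated matroid, respectively matroid-intersection, optimization problem solvable combinatorially, and an integral optimal dual vector $r$ can be extracted from the optimality certificate---a chain or laminar family of tight rank constraints in the one-matroid case, and an analogous intersection certificate in the two-matroid case---which is precisely the integral dual guaranteed by Edmonds' box-TDI theorems. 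I would treat the one-matroid and two-matroid cases separately, since extracting the integral dual from a matroid-intersection optimality certificate needs more care than reading it off the greedy algorithm.
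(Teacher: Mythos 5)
Your proposal is correct and follows essentially the same route as the paper, which proves the corollary by invoking Theorem~\ref{thm:2pseudoapprox} together with the classical box-TDI descriptions from Schrijver and the remark (made earlier in Section~\ref{sec:settingRes}) that efficient integral dual oracles are known for all these systems---the TU cases via integral vertices of the polynomial-size dual, the matroid and matroid-intersection cases via standard combinatorial algorithms. Your additional observations, that feasibility of~\eqref{eq:box-LP} is automatic since a nonempty $w$-down-closed set contains the zero vector and that the matroid duals are extracted from optimality certificates for the exponential-size rank descriptions, are exactly the details the paper leaves implicit.
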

We recall that for the maximum $s$-$t$ flow problem,
a $2$-pseudoapproximation was already known due to
Burch et al.~\cite{burch2003decomposition}.

\bigskip

Furthermore, for interdicting independent sets of
a matroid we obtain stronger results by
leveraging the strong combinatorial
structure of matroids to adapt our approach.
Consider a matroid $M=(N,\mathcal{I})$ on ground
set $N$ with independent sets
$\mathcal{I}\subseteq 2^N$. We recall the definition
of a matroid, which requires $\mathcal{I}$ to be a
nonempty set such that: (i) $\mathcal{I}$ is
an independence system, i.e., $I\in \mathcal{I}$
and $J\subseteq I$ implies $J\in \mathcal{I}$,
and
(ii) for any $I,J\in \mathcal{I}$ with $|I|<|J|$,
there exists $e\in J\setminus I$ such that
$I\cup\{e\}\in \mathcal{I}$.
We typically assume that a matroid is given by an
\emph{independence oracle}, which is an oracle that,
for any $I\subseteq N$, returns whether $I\in \mathcal{I}$
or not.
See~\cite[Volume~B]{schrijver2003combinatorial} for more
information on matroids.

For matroids, we can get a $2$-pseudoapproximation
even for arbitrary nonnegative weight functions $w$, i.e., 
for interdicting the \emph{maximum weight} independent
set of a matroid. Furthermore, we can also handle
monotone nonnegative submodular interdiction costs $c$. A
\emph{submodular function} $c$ defined on a ground set
$N$, is a function $c:2^N\rightarrow \mathbb{R}_{\geq 0}$
that assigns a nonnegative value $c(S)$ to each set
$S\subseteq N$ and fulfills the following property
of \emph{economies of scale}:
\begin{equation*}
c(A\cup\{e\}) - c(A) \geq c(B\cup \{e\})- c(B)
\qquad A\subseteq B \subseteq N, e\in N\setminus B.
\end{equation*}
In words, the marginal cost of interdicting an
element is lower when more elements will be
interdicted.
Economies of scale can often be a natural property in
interdiction problems. It allows for
modeling dependencies that are sometimes called
\emph{cascading failures} or \emph{chain-reactions},
depending on the context. More precisely, it may
be that the interdiction of a set of elements $S\subseteq N$
will render another element $e\in N$ unusable.
This can be described by a submodular interdiction
cost $c$ which assigns a marginal cost of $0$ to the
element $e$, once all elements of $S$ have been removed.
Still, removing only $e$ may have a strictly positive
interdiction cost.
Such effects cannot be captured with linear
interdiction costs.
A submodular function $c:2^N\rightarrow \mathbb{R}_{\geq 0}$
is called \emph{monotone} if $c(A)\leq c(B)$ for
$A\subseteq B\subseteq N$.
We typically assume that a submodular function $f$ is
given through a \emph{value oracle}, which is an 
oracle that, for any set $S\subseteq N$, returns
$f(S)$.

\begin{theorem}
There is an efficient $2$-pseudoapproximation
to interdict the problem of finding a maximum weight
independent set in a matroid, with monotone
nonnegative submodular interdiction
costs. The following is a formal description of this
interdiction problem:
\begin{equation*}
\min_R\{ \max_I \{ w(I) \mid I\in\mathcal{I},
I\cap R =\emptyset\} \mid R\subseteq N, c(R)\leq B\},
\end{equation*}
where $c:2^N\rightarrow \mathbb{R}_{\geq 0}$ is
a monotone nonegative submodular function, and
$w\in \mathbb{Z}_{\geq 0}^N$ \footnote{Notice that
the integrality requirement for $w$ is not restrictive.
Any $w\in \mathbb{Q}_{\geq 0}^N$ can be scaled up to
an integral weight vector without changing the problem.}.
The matroid is given through an independence oracle and
the submodular cost function $c$ through a value oracle.
\end{theorem}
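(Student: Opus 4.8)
The plan is to separate the two generalizations hidden in the statement --- arbitrary nonnegative weights and monotone submodular costs --- and to dispose of them in that order, using Theorem~\ref{thm:2pseudoapprox} as the engine for the first and reopening its proof for the second.

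First I would eliminate the weights by lifting the problem onto a polymatroid over the same ground set $N$. Let $f$ be the weighted rank function $f(S)=\max\{w(I)\mid I\in\mathcal{I},\,I\subseteq S\}$, which is monotone, integer-valued and submodular, and whose value at any $S$ is computed by the matroid greedy algorithm from the independence oracle and $w$. I would take $\mathcal{X}=P_f\cap\mathbb{Z}^N$, where $P_f=\{x\ge 0\mid x(S)\le f(S)\ \forall S\subseteq N\}$, and set the objective to $w'=\bfone$. The point of this choice is that forcing $x(e)=0$ for $e\in R$ leaves exactly $\max\{\bfone^T x\mid x\in P_f,\ \mathrm{supp}(x)\subseteq N\setminus R\}=f(N\setminus R)=\max\{w(I)\mid I\in\mathcal{I},\,I\cap R=\emptyset\}$, so the $\bfone$-objective interdiction over $P_f$ reproduces the weighted matroid interdiction on $N$ with unchanged costs. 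I would then verify the three hypotheses of Theorem~\ref{thm:2pseudoapprox}: $w'=\bfone$ is a $\{0,1\}$-vector; $P_f$ is down-closed, so $\mathcal{X}$ is $\bfone$-down-closed in the sense of Definition~\ref{def:wDownClosed} (setting coordinate $e$ to zero gives the required $x'$ with equality in condition (iii)); and the polymatroid system $\{x\ge 0\mid x(S)\le f(S)\}$ is box-TDI, with the box-constrained optimum and an integral dual --- a chain of tight sets together with the tight upper bounds --- produced in polynomial time by the polymatroid greedy from the value oracle for $f$. For linear costs this already yields the $2$-pseudoapproximation as a special case of Theorem~\ref{thm:2pseudoapprox}.

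The remaining and main difficulty is that the budget is governed by a monotone submodular $c$, so the theorem cannot be invoked verbatim. Here I would reopen its algorithm. Dualizing the inner $\bfone$-maximization over $P_f$ with the deletion constraints turns it into a fractional covering program in which every element $e$ must either be covered by the dual variables $y$ or placed into the interdiction set $R$; writing $z\in\{0,1\}^N$ for the indicator of $R$, this reads $\min\sum_S f(S)\,y_S$ subject to $\sum_{S\ni e}y_S+z(e)\ge 1$ for all $e$, together with $c(\mathrm{supp}(z))\le B$. The crucial observation is that the Lovász extension of a submodular function is piecewise-linear and convex, so relaxing $z$ to $[0,1]^N$ and replacing the budget by its Lovász-extension value at most $B$ turns the whole relaxation into a linear program over a polyhedron (separable in the oracle model via the base polytope of $c$ and the value oracle of $f$). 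I would then run the Burch-style parametric argument on this relaxation, tracing the optimal face as the budget parameter varies; at the critical value this produces two interdiction sets, one respecting and one overrunning the budget, whose convex combination is the fractional optimum and which furnish the $(1+\alpha,1)$- and $(1,1+1/\alpha)$-guarantees.

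I expect the genuine obstacle to be exactly this last step: certifying that the fractional optimum decomposes into two \emph{integral} interdiction sets with the claimed bicriteria guarantee, even though the budget region $\{\,\hat{c}(z)\le B\,\}$ is itself not an integral polytope (here $\hat{c}$ denotes the Lovász extension of $c$). The route through is to couple the box-TDI integrality of $P_f$ established above --- which keeps the cover variables $y$ integral along the relevant edge --- with the polymatroid structure underlying $\hat{c}$: rounding $z$ against the linear ordering of $N$ that defines the active linear piece of $\hat{c}$ recovers two honest sets $R$ whose submodular costs are controlled by $\hat{c}$. This is precisely where monotonicity of $c$ is used, since it guarantees both that rounding $z$ up to an integral set never destroys coverage feasibility and that the cheapest interdiction set containing a prescribed uncovered set is that set itself, keeping the cost bookkeeping of the two endpoints tight.
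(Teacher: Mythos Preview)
Your first reduction --- replacing arbitrary nonnegative $w$ by the all-ones objective over the weighted-rank polymatroid $P_f$ --- is exactly what the paper does in Section~\ref{sec:weightedCase}, and your verification of the hypotheses of Theorem~\ref{thm:2pseudoapprox} is correct. For linear costs this already finishes the argument.

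For submodular costs, however, the paper takes a different and cleaner route than your Lov\'asz-extension plan. Rather than keeping a vector $z\in[0,1]^N$ and replacing the budget by $\hat c(z)\le B$, the paper lifts to variables $q(S)$ indexed by \emph{all} subsets $S\subseteq N$, with the intended interpretation that a single $q(S)=1$ indicates the interdiction set. After dropping $\sum_S q(S)\le 1$ (which is shown not to change the optimum) and Lagrangifying the budget constraint $\sum_S \kappa(S)q(S)\le B$, the resulting $LP(\lambda)$ is recognised as the LP dual of a \emph{polymatroid intersection} problem: maximise $\bfone^T x$ over $P_{r_w}\cap P_{\lambda\kappa}$. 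The polymatroid intersection min--max theorem (Schrijver, \S46.2) then says the dual optimum is attained at a single set $A\subseteq N$, namely $q=\chi^{\{A\}}$, $y=\chi^{\{N\setminus A\}}$, with value $\lambda\kappa(A)+r_w(N\setminus A)$. Thus every $LP(\lambda)$ has an optimal solution that \emph{is} an honest interdiction set, and the two endpoints $R^1,R^2$ at the critical $\lambda^*$ come for free, with no rounding needed. Monotonicity of $\kappa$ is used only to ensure $\lambda\kappa$ defines a polymatroid.

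The step you flag as the ``genuine obstacle'' --- extracting two integral interdiction sets from the fractional optimum of your Lov\'asz-extension relaxation --- is indeed the gap in your proposal. Your suggestion to round $z$ along the Lov\'asz ordering is plausible in spirit but not justified as written: you would need to show that the Lagrangian for fixed $\lambda$ has an integral optimum in $z$, and the coupling of the $z(e)$'s through $\hat c$ together with the covering constraints in $y$ makes this nontrivial to argue directly. The paper's polymatroid-intersection reformulation sidesteps this entirely, since integrality of the dual is a structural theorem rather than something to be recovered by rounding.
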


\medskip

Finally, we show that our approach can sometimes be
refined to obtain stronger approximation guarantees.
We illustrate this on the interdiction version
of the $b$-stable set problem in bipartite graphs.
Here, a bipartite graph $G=(V, E)$  with bipartition
$V=I\cup J$, and a vector $b\in \mathbb{Z}^E_{> 0}$
is given. A $b$-stable set in $G$
is a vector $x\in \mathbb{Z}_{\geq 0}^V$ such that
$x(i) + x(j) \leq b(\{i,j\})$ for $\{i,j\}\in E$.
Hence, by choosing $b$ to be the all-ones vector,
we obtain the classical stable set problem.
Because it can be formulated as a linear program with TU constraints,
finding a maximum cardinality $b$-stable set
in a bipartite graph is efficiently solvable.
However, its interdiction version is easily seen to be
NP-hard by a reduction from the knapsack problem.
Exploiting the adjacency properties of
a polytope that is crucial in our analysis
we can even get a true approximation algorithm,
which does not violate the budget.
More precisely, we obtain a polynomial-time
approximation scheme (PTAS), which is an algorithm
that, for any $\epsilon > 0$, computes efficiently an
interdiction set leading to a value of at
most $1-\epsilon$ times the optimal value.
\begin{theorem}\label{thm:PTASBIndepSet}
There is a PTAS for the interdiction of
$b$-stable sets in bipartite graphs.
\end{theorem}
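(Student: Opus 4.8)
The plan is to start from the $2$-pseudoapproximation of Theorem~\ref{thm:2pseudoapprox} and then \emph{repair} its budget-violating output using the very special polyhedral structure of bipartite $b$-stable sets. First I would specialize the framework to this problem. Setting $N=V$, $w=\bfone$, and letting $\mathcal{X}$ be the $b$-stable sets, the inner maximization after interdicting $R\subseteq V$ is, by LP duality over the (totally unimodular) bipartite incidence description, a minimum $b$-weighted edge cover of the non-interdicted vertices $V\setminus R$. Hence the whole interdiction problem can be written as a single covering program: with $y$ selecting cover edges (value $b^Ty$) and $z_v$ indicating interdiction of $v$, it becomes $\min\, b^Ty$ over the integral polytope $P=\{(y,z)\ge 0 : \sum_{e\ni v}y_e+z_v\ge 1\ \forall v\in V,\ z\le \bfone\}$ subject to the single extra budget constraint $c^Tz\le B$. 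Because only one linear constraint is added to an integral polytope, the LP optimum lies on an edge of $P$ between two adjacent integral vertices $(y_1,z_1)$ and $(y_2,z_2)$, one budget-feasible and one super-optimal but over budget; this is exactly the edge that the machinery behind Theorem~\ref{thm:2pseudoapprox} produces.

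The crucial structural step is to understand this edge. Since $P$ is an edge-cover-type polytope of a bipartite graph, I would show that two adjacent integral vertices differ only along a single connected alternating structure, so that the support of $(y_2,z_2)-(y_1,z_1)$ is one alternating path (or even cycle) in $G$. Consequently the budget-tight fractional optimum $v^*=\lambda(y_1,z_1)+(1-\lambda)(y_2,z_2)$, with $c^Tz^*=B$, has all of its fractional coordinates confined to this one path, along which decisions alternate between selecting a cover edge and interdicting a vertex. Because the LP is a relaxation of the combined minimization, its value satisfies $b^Ty^*\le \OPT$.

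To round $v^*$ to an \emph{integral, budget-feasible} solution with little loss, I would sweep along this path: there is a switch position before which we commit to the decisions of $(y_2,z_2)$ and after which to those of $(y_1,z_1)$. Sweeping toward $(y_1,z_1)$ only undoes interdiction, so some switch position restores $c^Tz\le B$, and patching the path at the switch costs at most one extra cover edge, raising the value by at most the $b$-cost of a single edge above $b^Ty^*$. Trying all $O(|V|)$ switch positions and keeping the best feasible one is polynomial. To turn this additive single-edge loss into a $(1+\epsilon)$ factor I would add a guessing step: for accuracy $\epsilon$, enumerate the at most $1/\epsilon$ heaviest edges of an optimal cover and pre-commit them (polynomially many guesses for fixed $\epsilon$); afterwards every remaining edge has $b$-value at most $\epsilon\cdot\OPT$, so the single-edge rounding loss is at most $\epsilon\cdot\OPT$. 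Returning the best budget-feasible solution over all guesses then yields value at most $(1+\epsilon)\,b^Ty^*\le(1+\epsilon)\OPT$ without ever exceeding the budget.

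The main obstacle is the structural claim that adjacent vertices of $P$ differ along a single alternating path, together with the guarantee that sweeping toward the feasible endpoint monotonically frees budget and can always be patched at the cost of exactly one cover edge; this is precisely the ``adjacency property of the polytope'' on which the refinement hinges, and verifying it for the specific box-constrained TU polytope $P$ is the technical heart. The remaining care lies in the charging argument: I must exploit that $w=\bfone$ is a $\{0,1\}$-objective, so the residual value is a sum of unit vertex contributions and is large whenever the cover is expensive, in order to convert the single-edge loss into a multiplicative $(1+\epsilon)$ guarantee once the heavy edges have been guessed away.
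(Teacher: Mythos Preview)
Your overall plan matches the paper's: recast the interdiction LP as a budgeted bipartite edge-cover problem, exploit that the LP optimum lies on an edge of the integral cover polytope between a feasible and an over-budget vertex, use the alternating-path adjacency structure to bound the rounding loss by $O(b_{\max})$, and turn this into a PTAS by guessing the $O(1/\epsilon)$ heaviest cover edges. The paper carries this out by first making the polytope an \emph{honest} edge-cover polytope (adding two auxiliary vertices $w_I,w_J$ so that the $z$-variables become edges), which lets it invoke Hurkens' adjacency lemma directly.

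The one real divergence is your rounding step. You propose to ``sweep'' along the alternating path and patch at a switch position with one extra edge; the paper does something simpler and cleaner: it just returns the budget-feasible endpoint $F^2$ and proves $b(F^2)\le b^Ty^*+2\,b_{\max}$ via a direct exchange (remove from the over-budget cover $F^1$ its at most two interdiction edges on the path, then add at most two $F^2$-edges to restore coverage). Your claim that ``sweeping toward $(y_1,z_1)$ only undoes interdiction'' is not justified---the alternating path mixes cover-edges and interdiction-edges, so partial swaps need not decrease $c^Tz$ monotonically---and the one-edge patching bound is asserted rather than argued. None of this is fatal (you could enumerate all switch positions, or simply take $F^2$), but the paper's route avoids the issue entirely and yields the additive $2\,b_{\max}$ loss with a two-line exchange argument.
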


We complete this discussion of interdicting
$b$-stable sets in bipartite graphs
by showing that the special case of
interdicting stable sets in bipartite
graph, i.e., $b=1$, is efficiently solvable.
This is done through a reduction to a
polynomial number of efficiently solvable
matroid intersection problems.

\begin{theorem}\label{thm:maxCardBIndepSet}
The problem of interdicting the maximum cardinality
stable set in a bipartite graph can be
solved efficiently.
\end{theorem}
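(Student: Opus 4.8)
The plan is to reduce the problem to a polynomial number of weighted matroid intersection instances over pairs of partition matroids. Throughout, write $n := |V|$, and for a bipartite graph $H$ let $\nu(H)$ denote the size of a maximum matching and $\alpha(H)$ the size of a maximum stable set. Since we interdict vertices, an interdiction set $R \subseteq V$ with $c(R) \le B$ removes $R$, and the objective is $\alpha(G-R)$, where $G-R$ is the subgraph induced on $V \setminus R$. The first step is to rewrite this objective so that matroid techniques apply. Since a set is stable exactly when its complement is a vertex cover, $\alpha(H) = |V(H)| - \tau(H)$, and by K\"onig's theorem $\tau(H) = \nu(H)$ for bipartite $H$; hence $\alpha(G-R) = (n - |R|) - \nu(G-R)$. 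Minimizing $\alpha(G-R)$ over budget-feasible $R$ is therefore equivalent to maximizing $|R| + \nu(G-R)$, and, as a maximum matching of $G-R$ is optimal for fixed $R$, to
\[
\max\bigl\{\,|R| + |M| \;\mid\; R \subseteq V,\; M \text{ a matching},\; V(M) \cap R = \emptyset,\; c(R) \le B \,\bigr\},
\]
where $V(M)$ is the set of vertices covered by $M$.

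Next I would encode the valid pairs $(R,M)$ --- a removal set $R$ together with a matching $M$ vertex-disjoint from $R$ --- as the common independent sets of two partition matroids on the ground set $\Omega := V \cup E$. Writing $I \cup J$ for the bipartition and $\delta(v)$ for the edges incident to $v$, define $\mathcal{M}_1$ on $\Omega$ by the blocks $\{i\} \cup \delta(i)$ for each $i \in I$ (each edge lies in exactly one such block, via its $I$-endpoint) together with the singletons $\{j\}$ for $j \in J$, every block of capacity one; define $\mathcal{M}_2$ symmetrically with blocks $\{j\} \cup \delta(j)$ for $j \in J$ and singletons $\{i\}$ for $i \in I$. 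A set $X \subseteq \Omega$ is independent in both $\mathcal{M}_1$ and $\mathcal{M}_2$ exactly when, setting $R := X \cap V$ and $M := X \cap E$, every vertex is the endpoint of at most one edge of $M$ (so $M$ is a matching) and no removed vertex is incident to an edge of $M$ (so $V(M) \cap R = \emptyset$). Thus common independent sets of $\mathcal{M}_1, \mathcal{M}_2$ are in bijection with valid pairs, and $|X| = |R| + |M|$ is precisely the quantity to be controlled.

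The remaining difficulty, and the main obstacle, is the budget $c(R) \le B$, a knapsack-type constraint that cannot be imposed directly inside a matroid intersection. I would decouple it by parameterizing over the optimal objective value. Since $\alpha(G-R) \in \{0,1,\dots,n\}$, it suffices, for each target $k$, to decide whether some budget-feasible $R$ attains $\alpha(G-R) \le k$, i.e.\ whether a valid pair satisfies $|R| + |M| \ge n-k$ with $c(R) \le B$. Assign cost $c(v)$ to each vertex element $v \in V$ and cost $0$ to each edge element of $\Omega$; then $c(X) = c(X \cap V)$ for every $X$. Standard weighted matroid intersection on $\mathcal{M}_1, \mathcal{M}_2$ produces, for every cardinality $p$, a minimum-cost common independent set of size $p$ (such sets exist for all $p$ up to the maximum, as common independent sets of partition matroids can be shrunk element by element). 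Writing $\mathrm{cost}(p)$ for this value, the minimum cost of achieving $\alpha(G-R) \le k$ is $\gamma(k) := \min_{p \ge n-k} \mathrm{cost}(p)$, and the optimal interdiction value is the smallest $k$ with $\gamma(k) \le B$, the witnessing $R = X \cap V$ being the interdiction set. As matroid intersection over two partition matroids is polynomial-time solvable (it is an assignment, equivalently a min-cost flow, computation) and only polynomially many instances are solved, the whole procedure runs in polynomial time. The points needing care are verifying the bijection between common independent sets and valid pairs, so that the matroid-intersection optimum equals the combinatorial optimum, and confirming that optimizing over all sizes $p \ge n-k$, rather than a single fixed size, correctly captures the test $\alpha(G-R) \le k$.
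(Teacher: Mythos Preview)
Your argument is correct and takes a genuinely different route from the paper's. Both proofs start identically, using K\"onig's theorem to rewrite the objective as maximizing $|R|+|M|$ over budget-feasible vertex sets $R$ and matchings $M$ disjoint from $R$. From there the paper works over the ground set $E$ only: it orders the vertices by cost, argues that an optimal interdiction set can be taken of the form $V_\ell\setminus V(M^*)$ for some prefix $V_\ell$ of the cheapest vertices, and then \emph{guesses} a quadruple $(\ell,\beta_I,\beta_J,\gamma)$ recording the prefix length, how many matched vertices of $M^*$ fall into $V_\ell$ on each side, and $|M^*|$. For each of the $O(n^4)$ quadruples it solves a laminar matroid intersection problem over $E$ to find a suitable matching. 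Your construction instead enlarges the ground set to $V\cup E$ and encodes the valid pairs $(R,M)$ directly as common independent sets of two partition matroids; the budget becomes a linear cost supported on the vertex elements, and a single run of weighted matroid intersection yields, for every cardinality $p$, a minimum-cost common independent set of that size. Because costs are nonnegative and any common independent set can be shrunk element by element, $\mathrm{cost}(p)$ is nondecreasing, so the optimum is simply $n-p^*$ with $p^*=\max\{p:\mathrm{cost}(p)\le B\}$. This avoids the four-parameter enumeration entirely and reduces the problem to one matroid intersection computation rather than polynomially many; the trade-off is that you work over a larger ground set, but the resulting partition matroids are still of the simplest kind (in fact the intersection is a transversal-type assignment problem). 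The two points you flag as needing care --- the bijection with valid pairs and the correctness of minimizing over all $p\ge n-k$ --- are both fine as you describe them.
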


The above theorem generalizes a result by
Bazgan, Toubaline and Tuza~\cite{bazgan_2011_most}
who showed that interdiction of stable sets
in bipartite graphs can be done efficiently when
all interdiction costs are one.
Our result applies to arbitrary interdiction costs.

\section{General approach to obtain $2$-pseudoapproximations}
\label{sec:2pseudoapprox}

Consider an interdiction problem that fulfills the conditions
of Theorem~\ref{thm:2pseudoapprox}.
As usual, let $N$ be the ground set of our problem,
$w\in \{0,1\}^N$ be the objective vector, and we denote
by $\mathcal{X}\subseteq \mathbb{R}^N_{\geq 0}$ the set of 
feasible solutions. Furthermore, let
$\{x\in \mathbb{R}^N \mid Ax \leq b, x \geq 0\}
= \conv(\mathcal{X})$ be a box-$w$-DI solvable
description of $\conv(\mathcal{X})$. We denote
by $m$ the number of rows of $A$.

One key ingredient in our approach is to model interdiction
as a modification of the objective instead of a restriction
of sets that can be chosen.
This is possible due to $w$-down-closedness.
More precisely, we replace the description of the
interdiction problem given by~\eqref{eq:initProb} with
the following min-max problem.

\begin{equation}
\begin{array}{>{\displaystyle}rr@{\;\;}c@{\;\;}l}
\min_{r} \max_{x} &(w-r)^T x & & \\
              &A x      &\leq &b \\
              &x        &\geq &0 \\
              &   c^T r &\leq &B \\
              &       r &\in  &\{0,1\}^N \\
\end{array}
\label{eq:minMaxExact}
\end{equation}

We start by showing that~\eqref{eq:initProb}
and~\eqref{eq:minMaxExact} are equivalent
in the following sense.
For any interdiction set $R\subseteq N$, let
\begin{align*}
\phi(R):&=
\max\{w^T x \mid x\in \conv(\mathcal{X}), x(e)=0 \;\forall e\in R\} \\
&=\max\{w^T x \mid Ax \leq b,
  x\geq 0, x(e)=0 \;\forall e\in R\}.
\end{align*}
Hence, $\phi(R)$ is the value of the
problem~\eqref{eq:initProb} for a fixed set $R$.
Similarly, we define for any characteristic vector
$r\in\{0,1\}^N$ of an interdiction set
\begin{align*}
\psi(r) :&= \max\{(w-r)^T x \mid x\in \conv(\mathcal{X})\}\\
&= \max\{(w-r)^T x \mid Ax \leq b, x\geq 0\}.
\end{align*}
Thus, $\psi(r)$ is the value of~\eqref{eq:minMaxExact}
for a fixed vector $r\in\{0,1\}^N$.

\begin{lemma}\label{lem:intObj}
For every interdiction set $R\subseteq N$, we have
$\phi(R) = \psi(\chi^R)$.
In particular, this implies that~\eqref{eq:initProb}
and~\eqref{eq:minMaxExact} have the same optimal
value, and optimal interdiction sets $R$ to~\eqref{eq:initProb}
correspond to optimal characteristic vectors $\chi^R$
to~\eqref{eq:minMaxExact} and vice versa.
\end{lemma}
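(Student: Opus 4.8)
The plan is to prove the two inequalities $\phi(R) \le \psi(\chi^R)$ and $\psi(\chi^R) \le \phi(R)$ separately; the first is immediate from the definitions, while the second is where $w$-down-closedness does the real work.

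For $\phi(R) \le \psi(\chi^R)$, I would note that the feasible region of $\phi(R)$, namely $\{x \in \conv(\mathcal{X}) : x(e) = 0\ \forall e \in R\}$, is contained in the feasible region of $\psi(\chi^R)$, which is all of $\conv(\mathcal{X})$. Moreover, on the former the two objectives coincide: if $x(e) = 0$ for every $e \in R$, then $(\chi^R)^T x = \sum_{e \in R} x(e) = 0$, so $(w - \chi^R)^T x = w^T x$. Hence any $x$ feasible for $\phi(R)$ is feasible for $\psi(\chi^R)$ with the same objective value, and taking the maximum gives $\phi(R) \le \psi(\chi^R)$.

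For the reverse inequality I would start from an optimal solution $x^*$ of $\psi(\chi^R)$. Since this maximizes a linear function over the integral polytope $\conv(\mathcal{X})$, the optimum is attained at a vertex, and vertices of $\conv(\mathcal{X})$ belong to $\mathcal{X}$, so I may assume $x^* \in \mathcal{X}$. The goal is then to turn $x^*$ into a point $\hat{x} \in \mathcal{X}$ that is feasible for $\phi(R)$ (i.e.\ vanishes on $R$) while losing at most $(\chi^R)^T x^*$ in the $w$-objective. I would process the elements of $R = \{e_1, \dots, e_k\}$ one at a time, maintaining a componentwise nonincreasing sequence $x^* = x^0 \ge x^1 \ge \cdots \ge x^k$ in $\mathcal{X}$ with the invariant $x^i(e_j) = 0$ for all $j \le i$. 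At step $i$, if $x^{i-1}(e_i) = 0$ I set $x^i = x^{i-1}$; otherwise I apply Definition~\ref{def:wDownClosed} at the coordinate $e_i$ to obtain $x^i \le x^{i-1}$ in $\mathcal{X}$ with $x^i(e_i) = 0$ and $w^T x^i \ge w^T x^{i-1} - x^{i-1}(e_i)$. The monotonicity $x^i \le x^{i-1}$ together with $x^i \ge 0$ guarantees that coordinates already zeroed stay at zero, so the invariant is preserved.

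The key step, and the main technical point, is controlling the total objective loss. Telescoping the per-step inequalities gives $w^T x^k \ge w^T x^* - \sum_{i=1}^k x^{i-1}(e_i)$. Because the sequence is componentwise nonincreasing, $x^{i-1}(e_i) \le x^*(e_i)$ for each $i$, so $\sum_{i=1}^k x^{i-1}(e_i) \le \sum_{e \in R} x^*(e) = (\chi^R)^T x^*$. Setting $\hat{x} = x^k$, which lies in $\mathcal{X} \subseteq \conv(\mathcal{X})$ and vanishes on $R$, this yields $\phi(R) \ge w^T \hat{x} \ge w^T x^* - (\chi^R)^T x^* = (w - \chi^R)^T x^* = \psi(\chi^R)$, completing the second inequality and hence the equality $\phi(R) = \psi(\chi^R)$. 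The ``in particular'' claim then follows at once: the $\{0,1\}$-vectors $r$ are exactly the characteristic vectors $\chi^R$ of subsets $R \subseteq N$, the budget constraints $c^T \chi^R \le B$ and $c(R) \le B$ coincide, and the objectives agree by the equality just proved, so \eqref{eq:initProb} and \eqref{eq:minMaxExact} have identical feasible sets, identical objective values on them, and thus the same optimum and corresponding optimizers.
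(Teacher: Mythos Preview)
Your proof is correct. The first inequality matches the paper verbatim; for the second you give a direct iterative construction, processing the elements of $R$ one by one via $w$-down-closedness and bounding the total loss by a telescoping argument. The paper instead factors the key step into a separate Lemma~\ref{lem:clipR}, which asserts (for any $r\in\mathbb{R}^N_{\ge 0}$) the existence of an optimal solution to $\max\{(w-r)^T x : x\in\conv(\mathcal{X})\}$ that vanishes on $\{e:r(e)\ge 1\}$; its proof is an extremal argument (take an optimal $x$ minimizing $x(U)$ and derive a contradiction from a single application of $w$-down-closedness). Applying that lemma with $r=\chi^R$ yields the inequality in one line. Your constructive route is self-contained and perhaps more transparent about where the loss goes; the paper's route isolates a reusable statement---Lemma~\ref{lem:clipR} is invoked again in the proof of Lemma~\ref{lem:dropUpperBound1}---and handles the more general case $r\in\mathbb{R}^N_{\ge 0}$ rather than just $\chi^R$.
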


We show Lemma~\ref{lem:intObj} based on another lemma
stated below that highlights an important
consequence of $w$-down-closedness,
which we will use later again.

\begin{lemma}\label{lem:clipR}
Let $r\in \mathbb{R}^N_{\geq 0}$ and
$U = \{e\in N \mid r(e) \geq 1\}$.
Then there exists $x\in \mathbb{R}_{\geq 0}^N$ with
$x(e) = 0\;\forall e\in U$, such that $x$ is an optimal
solution to the following linear program.
\begin{equation}
\begin{array}{>{\displaystyle}rr@{\;\;}c@{\;\;}l}
\max_{x} &(w-r)^T x & & \\
         &A x      &\leq &b \\
         &x        &\geq &0 \\
\end{array}
\label{eq:psiProb}
\end{equation}

\end{lemma}
\begin{proof}
Among all optimal solutions to the above linear
program, let $x^*$ be one that minimizes
$x^*(U)$.
Notice that $x^*$ can be chosen to be a vertex
of $\conv(\mathcal{X}) = \{x\in
\mathbb{R}^N \mid Ax \leq b , x\geq 0\}$,
since $x^*$ can be obtained by minimizing the
objective $\chi^U$ over the face of all optimal
solutions to the above LP.
We have to show $x^*(U) = 0$.
Assume for the sake of contradiction that
there is an element $e\in U$ such that
$x^*(e) > 0$.
Since $x^*$ is a vertex of $\conv(\mathcal{X})$,
we have $x^*\in \mathcal{X}$.
By $w$-down-closedness of $\mathcal{X}$, there
is a vector $x' \in \mathcal{X}$ with $x' \leq x^*$,
$x'(e)=0$, and $w^T x' \geq w^T x^* - x^*(e)$.
We thus obtain
\begin{align*}
(w - r)^T x' &\geq w^T x^* - x^*(e) - r^T x' \\
             &\geq w^T x^* - x^*(e) - (r^T x^* - x^*(e)) \\
             &= (w-r)^T x^*,
\end{align*}
where in the second inequality we used
$r^T x' \leq r^T x^* - x^*(e)$, which follows
from $x'\leq x^*$ together with $x'(e)=0$
and $r(e) \geq 1$.
Hence, $x'$ is an optimal solution to the LP
with $x'(U) < x^*(U)$, which violates the
definition of $x^*$ and thus finishes the proof.
\end{proof}

\begin{proof}[Proof of Lemma~\ref{lem:intObj}]
Let $R\subseteq N$ be an interdiction set, and $r=\chi^R$
its characteristic vector.
Let $x\in \mathcal{X}$ be an optimal solution to the
maximization problem defining $\phi(R)$, i.e.,
$w^T x = \phi(R)$ and $x(e)=0 \;\forall e\in R$.
We have
\begin{equation*}
\psi(r) \geq (w-r)^T x = w^T x = \phi(R),
\end{equation*}
where the first equality follows from $x(e)=0$
for $e\in R$. Hence, $\psi(r) \geq \phi(R)$.

Conversely, let $x\in \conv(\mathcal{X})$
be an optimal
solution to the maximization problem defining
$\psi(r)$, i.e., $\psi(r) = (w-r)^T x$.
By Lemma~\ref{lem:clipR}, $x$ can be chosen
such that $r^T x = x(R) = 0$. Hence,
\begin{equation*}
\psi(r) = (w-r)^T x = w^T x \leq \phi(R),
\end{equation*}
and thus $\phi(R) = \psi(r)$.
\end{proof}

Hence,~\eqref{eq:minMaxExact} is an alternative description
of the interdiction problem~\eqref{eq:initProb} in which
we are interested.
In a next step we relax the integrality of $r$ to obtain
the following mathematical program.

\begin{equation}
\begin{array}{>{\displaystyle}rr@{\;\;}c@{\;\;}l}
\min_{r\in \mathbb{R}^n} \max_{x \in \mathbb{R}^n} &(w-r)^T x & & \\
              &A x      &\leq &b \\
              &x        &\geq &0 \\
              &   c^T r &\leq &B \\
              &1 \;\; \geq \;\; r        &\geq &0 \\
\end{array}
\label{eq:minMax}
\end{equation}
As we will show next, the constraint $1\geq r$ can be
dropped due to $w$-down-closedness without changing
the objective.
This leads to the following problem.
\begin{equation}
\begin{array}{>{\displaystyle}rr@{\;\;}c@{\;\;}l}
\min_{r\in \mathbb{R}^n} \max_{x \in \mathbb{R}^n} &(w-r)^T x & & \\
              &A x      &\leq &b \\
              &x        &\geq &0 \\
              &   c^T r &\leq &B \\
              &r        &\geq &0 \\
\end{array}
\label{eq:minMaxSimple}
\end{equation}
The following lemma not only highlights that the
objective values of~\eqref{eq:minMax}
and~\eqref{eq:minMaxSimple} match, but also
that any optimal interdiction vector $r$
of~\eqref{eq:minMaxSimple} can easily be transformed
to an optimal interdiction vector of~\eqref{eq:minMax}.
Thus, we can restrict
ourselves to~\eqref{eq:minMaxSimple}.
We recall that $\psi(r)$ corresponds to the inner
maximization problem of both~\eqref{eq:minMax}
and~\eqref{eq:minMaxSimple} for a fixed vector $r$.

\begin{lemma}\label{lem:dropUpperBound1}
We have
\begin{equation*}
\psi(r) = \psi(r\wedge 1)
  \qquad \forall r\in \mathbb{R}_{\geq 0}^N,
\end{equation*}
where $r\wedge 1$ is the component-wise minimum between
$r$ and the all-ones vector $1\in \mathbb{R}^N$.
This implies that~\eqref{eq:minMax}
and~\eqref{eq:minMaxSimple} have
the same optimal value, and if $r$ is optimal
for~\eqref{eq:minMaxSimple} then 
$r\wedge 1$ is optimal for~\eqref{eq:minMax}.
\end{lemma}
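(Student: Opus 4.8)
The plan is to prove the pointwise identity $\psi(r)=\psi(r\wedge 1)$ first, and then read off the two consequences about \eqref{eq:minMax} and \eqref{eq:minMaxSimple} almost immediately. The identity splits into two inequalities, one routine and one carrying the real content. For the routine inequality $\psi(r\wedge 1)\ge \psi(r)$ I would simply invoke monotonicity of the objective: since $r\wedge 1\le r$ componentwise, we have $w-(r\wedge 1)\ge w-r$, and because the inner maximization ranges over $x\ge 0$, every feasible $x$ satisfies $(w-r\wedge 1)^T x\ge (w-r)^T x$; maximizing both sides over the (common) feasible region gives the claim. No structural hypothesis is needed here.

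The reverse inequality $\psi(r)\ge \psi(r\wedge 1)$ is where $w$-down-closedness enters, through Lemma~\ref{lem:clipR}. Set $U=\{e\in N\mid r(e)\ge 1\}$, and observe that this coincides with $\{e\mid (r\wedge 1)(e)\ge 1\}$. I would apply Lemma~\ref{lem:clipR} with the capped vector $r\wedge 1$ in place of $r$, obtaining an optimal solution $\hat x$ of the LP \eqref{eq:psiProb} with objective $(w-r\wedge 1)^T x$ that moreover vanishes on $U$. The key observation is that $r$ and $r\wedge 1$ differ only on $U$, where $\hat x$ is zero, and agree off $U$; hence $(r\wedge 1-r)^T\hat x=0$ and therefore $(w-r)^T\hat x=(w-r\wedge 1)^T\hat x=\psi(r\wedge 1)$. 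Since the feasible set for $x$ is identical in both programs, $\hat x$ is feasible for the LP defining $\psi(r)$, so $\psi(r)\ge (w-r)^T\hat x=\psi(r\wedge 1)$. Combining the two inequalities yields $\psi(r)=\psi(r\wedge 1)$.

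The main obstacle is precisely this reverse direction, and the crucial technical choice is to invoke Lemma~\ref{lem:clipR} with $r\wedge 1$ rather than with $r$. This places the support-killing exactly on the coordinate set $U$ on which the two objective vectors disagree, so that capping $r$ at $1$ is invisible to the chosen optimal solution. Applying the lemma to $r$ directly would only reprove the routine direction and would not close the argument.

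Finally, for the consequences I would argue as follows. Any $r$ feasible for \eqref{eq:minMaxSimple}, i.e.\ $r\ge 0$ and $c^T r\le B$, yields $r\wedge 1$ feasible for \eqref{eq:minMax}: clearly $0\le r\wedge 1\le 1$, and since the costs $c$ are nonnegative and $r\wedge 1\le r$ we get $c^T(r\wedge 1)\le c^T r\le B$. By the identity just proved, $r\wedge 1$ attains in \eqref{eq:minMax} the objective value $\psi(r\wedge 1)=\psi(r)$ that $r$ attains in \eqref{eq:minMaxSimple}, so the optimum of \eqref{eq:minMax} is at most that of \eqref{eq:minMaxSimple}. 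The opposite inequality is immediate, because \eqref{eq:minMaxSimple} minimizes over a superset of the feasible vectors of \eqref{eq:minMax}. Hence the two optimal values coincide, and whenever $r$ is optimal for \eqref{eq:minMaxSimple} the vector $r\wedge 1$ is feasible for \eqref{eq:minMax} with the same, now optimal, value, and is therefore optimal for \eqref{eq:minMax}.
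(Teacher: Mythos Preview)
Your proposal is correct and follows essentially the same approach as the paper: both apply Lemma~\ref{lem:clipR} to the capped vector $r\wedge 1$ to obtain an optimizer $\hat x$ for $\psi(r\wedge 1)$ that vanishes on $U=\{e:r(e)\ge 1\}$, then use that $r$ and $r\wedge 1$ agree off $U$ to conclude $(w-r)^T\hat x=(w-r\wedge 1)^T\hat x$. Your write-up additionally spells out the deduction of the consequences for~\eqref{eq:minMax} and~\eqref{eq:minMaxSimple}, which the paper leaves implicit.
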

\begin{proof}
Let $r\in \mathbb{R}^N_{\geq 0}$ and consider the
maximization problem that defines $\psi(r)$, which
is the same as the linear program desribed
by~\eqref{eq:psiProb}.
Furthermore, let $r'=r\wedge 1$, and let
$U=\{e\in N \mid r(e) \geq 1\}$.
In particular, $r$ and $r'$ are identical on $N\setminus U$.
We clearly have $\psi(r') \geq \psi(r)$ by monotonicity
of $\psi$. Therefore only $\psi(r') \leq \psi(r)$ has
to be shown.

By Lemma~\ref{lem:clipR}, there exists an optimal
vector $x$ to the maximization problem defining $\psi(r')$
that satisfies $x(e) = 0 \; \forall e\in U$.
Furthermore, by using that $r$ and $r'$ are
identical on $N\setminus U$ we obtain
\begin{align*}
\psi(r') &= (w-r')^T x \\
         &= w^T x - \sum_{e\in N\setminus U} r'(e) x(e)
                  - \sum_{e\in U} r'(e) x(e)\\
         &= w^T x - \sum_{e\in N\setminus U} r(e) x(e)
                  - \sum_{e\in U} r'(e) x(e)
           && (\text{$r$ and $r'$ are identical on $N\setminus U$})\\
         &= w^T x - \sum_{e\in N\setminus U} r(e) x(e)
                  - \sum_{e\in U} r(e) x(e)
           && (\text{$x(e)=0$ for $e\in U$})\\
         &= (w-r)^T x \\
         &\leq \psi(r),
\end{align*}
as desired.
\end{proof}

Interestingly, problem~\eqref{eq:minMaxSimple} has already
been studied in a different context. It can be interpreted
as the problem to inhibit a linear optimization problem by
a continuous and limited change of the objective vector $w$.
In particular, Frederickson and
Solis-Oba~\cite{frederickson1996increasing,%
frederickson1997efficient} present efficient
algorithms to solve this problem when the underlying
combinatorial problem is the maximum weight independent
set problem in a matroid.
J{\"u}ttner~\cite{juttner2006budgeted} presents
efficient procedures for
polymatroid intersection and minimum cost circulation problem.
Also, J{\"u}ttner provides an excellent discussion
how such problems can be solved efficiently using parametric
search techniques.

However, our final goal is quite different from their setting
since, eventually,
we need to find a $\{0,1\}$-vector $r$.
This difference is underlined by the fact that without
integrality, problem~\eqref{eq:minMaxSimple}
can often be solved efficiently,
whereas the interdiction problems we consider are NP-hard.

Still, we continue to
further simplify~\eqref{eq:minMaxSimple}
in a similar way as it was done by
J{\"u}ttner~\cite{juttner2006budgeted}.
For a fixed $r$, the inner maximization problem
in~\eqref{eq:minMaxSimple} is a
linear program with a finite optimum, since
$\mathcal{X}$ is bounded and nonempty by
assumption, and therefore also
$\conv(\mathcal{X})=\{x\in \mathbb{R}^N\mid Ax\leq b, x\geq 0\}$
is bounded and nonempty.
Hence, we can leverage strong duality 
to dualize the inner maximization into a minimization problem.
We thus end up with a problem where we first
minimize over $r$ and then over the dual variables,
which we can rewrite as a single minimization,
thus obtaining the following LP.
\begin{equation}
\begin{array}{>{\displaystyle}rr@{\;}r@{\;\;}c@{\;\;}l}
\min          &b^T y  &     &     &  \\
              &A^T y  &+\;r &\geq &w \\
              &y   &        &\geq &0 \\
              &    & c^T r  &\leq &B \\
              &    &     r  &\geq &0 \\
\end{array}
\label{eq:dualLPfull}
\end{equation}
Hence, by strong duality, the optimal value
of~\eqref{eq:dualLPfull} is the same as the optimal
value of~\eqref{eq:minMaxSimple}.
This reduction also shows why
problem~\eqref{eq:minMaxSimple}, which has no integrality
constraints on $r$, can often be solved
efficiently. This can often be achieved by
obtaining an optimal vector $r\in \mathbb{R}^N_{\geq 0}$
by solving the LP~\eqref{eq:dualLPfull} with standard
linear programming techniques.

What we will do in the following is to show that there
is an optimal solution $(r,y)$ for~\eqref{eq:minMaxSimple} which
can be written as a convex combination of two integral
solutions $(r^1,y^1)$ and $(r^2,y^2)$ that may violate
the budget constraint.
Similar to a reasoning used in Burch et
al.~\cite{burch2003decomposition}
this then implies than one of $r^1$ and $r^2$ is
a $2$-pseudoapproximation.

To compute $r^1$ and $r^2$, we move the constraint
$c^T r \leq B$ in~\eqref{eq:dualLPfull} into the objective
via Lagrangian duality, by introducing a
multiplier $\lambda\geq 0$
(see~\cite{boyd_2004_convex} for more details).
We do this in two steps to highlight that the resulting
Lagrangian dual problem can be solved via the oracle
guaranteed by box-$w$-DI solvability.
First, we dualize~\eqref{eq:dualLPfull} to the 
obtain the following linear program, which is
nicely structured in the sense
that for any fixed $\lambda\geq 0$, it corresponds to
optimizing a linear function over $\conv(\mathcal{X})$
with upper box constraints.

\begin{equation}
\begin{array}{>{\displaystyle}rr@{\;}r@{\;\;}c@{\;\;}l}
\max        &w^T z  & -\;\lambda B  &     &  \\
            &A z    &             &\leq &b \\
            &z      & -\;\lambda c  &\leq &0 \\
            &z      &         &\geq &0 \\
            &       & \lambda &\geq &0 \\
\end{array}
\label{eq:primLPfull}
\end{equation}

Consider the above LP as a problem parameterized by
$\lambda\geq 0$.
Since $\{x\in \mathbb{R}^N \mid Ax \leq b, x\geq 0\}$
is box-$w$-DI solvable, the LP obtained from~\eqref{eq:primLPfull}
by fixing $\lambda\geq 0$ has an optimal integral dual
solution. Furthermore, such an optimal integral dual solution 
can be found efficiently by box-$w$-DI solvability.
The dual problem of~\eqref{eq:primLPfull} for a fixed
$\lambda \geq 0$ is the problem LP($\lambda$) below
with optimal objective value $L(\lambda)$.
\begin{equation}
\begin{array}{>{\displaystyle}rr@{\;}r@{\;\;}c@{\;\;}l}
L(\lambda) =
        \min  &b^T y  &\multicolumn{3}{@{}l}{-\;\lambda(B-c^T r)}   \\
              &A^T y  &+\;r &\geq &w \\
              &y   &        &\geq &0 \\
              &    &    r   &\geq &0 \\
\end{array}
\tag{LP($\lambda$)}
\label{eq:dualLPlag}
\end{equation}
Notice that~\ref{eq:dualLPlag} is indeed the problem
obtained from~\eqref{eq:dualLPfull} by moving the
constraint $c^T r \leq B$ into the objective using $\lambda$
as Lagrangian multiplier.

The following lemma summarizes the relationships between
the different problems we introduced.

\begin{lemma}\label{lem:optValues}
The optimal values of~\eqref{eq:minMax},
\eqref{eq:minMaxSimple}, \eqref{eq:dualLPfull}, and
\eqref{eq:primLPfull} are all the same and equal
to $\max_{\lambda \geq 0} L(\lambda)$.

Furthermore, the common optimal value of the above-mentioned
problems are a lower bound to $\OPT$, the optimal value of
the considered interdiction problem~\eqref{eq:initProb}.
\end{lemma}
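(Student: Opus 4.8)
The plan is to treat the two assertions of the lemma separately. For the first, I would exhibit a chain of equalities linking \eqref{eq:minMax}, \eqref{eq:minMaxSimple}, \eqref{eq:dualLPfull}, \eqref{eq:primLPfull}, and $\max_{\lambda\geq 0}L(\lambda)$, where each link is either one of the lemmas already proved or a direct application of strong linear programming duality. For the second, I would observe that the relaxed problem \eqref{eq:minMax} is a continuous relaxation of the integral reformulation \eqref{eq:minMaxExact} of \eqref{eq:initProb}, so its value cannot exceed $\OPT$.

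Concretely, the chain would go as follows. By Lemma~\ref{lem:dropUpperBound1} the inner value $\psi(r)$ is unchanged under $r\mapsto r\wedge 1$, so \eqref{eq:minMax} and \eqref{eq:minMaxSimple} share the same optimum. Because $\conv(\mathcal{X})=\{x\mid Ax\leq b,\ x\geq 0\}$ is bounded and nonempty, for each fixed $r$ the inner maximization is a feasible and bounded LP; dualizing it and merging the resulting minimization with the outer minimization over $r$ turns \eqref{eq:minMaxSimple} into \eqref{eq:dualLPfull}, with equal value by strong duality. The program \eqref{eq:primLPfull} is by construction the LP dual of \eqref{eq:dualLPfull}, so strong duality again equates their values. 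Finally, to identify \eqref{eq:primLPfull} with $\max_{\lambda\geq 0}L(\lambda)$, I would fix $\lambda\geq 0$ in \eqref{eq:primLPfull}; the residual program $\max\{w^Tz-\lambda B\mid Az\leq b,\ z\leq\lambda c,\ z\geq 0\}$ has LP dual exactly \eqref{eq:dualLPlag} with value $L(\lambda)$, and taking the maximum over the free variable $\lambda\geq 0$ recovers the optimum of \eqref{eq:primLPfull}.

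For the lower bound I would use Lemma~\ref{lem:intObj}, which gives $\OPT=\min\{\psi(r)\mid r\in\{0,1\}^N,\ c^Tr\leq B\}$, the optimum of \eqref{eq:minMaxExact}. Problem \eqref{eq:minMax} minimizes the same objective $\psi$ over the larger region $\{r\mid 0\leq r\leq 1,\ c^Tr\leq B\}$, which contains every budget-feasible $\{0,1\}$-vector, so its optimum is at most $\OPT$; since all four programs share this optimum, the common value lower-bounds $\OPT$. The main obstacle is the last link of the chain: one must be careful to treat $\lambda$ as a decision variable of \eqref{eq:primLPfull} and recognize that fixing it isolates an inner LP whose dual is \eqref{eq:dualLPlag}, rather than conflating the Lagrangian parameter with a constant. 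Everything else reduces to routine strong-duality arguments, all justified by the standing boundedness and nonemptiness of $\mathcal{X}$.
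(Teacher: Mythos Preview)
Your proposal is correct and follows essentially the same chain of equalities as the paper's proof: Lemma~\ref{lem:dropUpperBound1} for \eqref{eq:minMax}$=$\eqref{eq:minMaxSimple}, strong LP duality for \eqref{eq:minMaxSimple}$=$\eqref{eq:dualLPfull} and \eqref{eq:dualLPfull}$=$\eqref{eq:primLPfull}, the Lagrangian link to $\max_{\lambda\geq 0}L(\lambda)$, and finally Lemma~\ref{lem:intObj} plus relaxation for the lower bound on $\OPT$. The only cosmetic difference is that the paper invokes Lagrangian duality directly to equate $\max_{\lambda\geq 0}L(\lambda)$ with the optimum of \eqref{eq:dualLPfull}, whereas you route the same identity through \eqref{eq:primLPfull} by fixing $\lambda$ and dualizing the inner program; these are the same argument phrased two ways.
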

\begin{proof}
Problem~\eqref{eq:minMax} and~\eqref{eq:minMaxSimple} have
identical optimal values due to Lemma~\ref{lem:dropUpperBound1}.
The LP~\eqref{eq:dualLPfull} was obtained
from~\eqref{eq:minMaxSimple} by dualizing the inner
maximization problem. Both problems have the same optimal
value due to strong duality, which holds since
$\conv(\mathcal{X})=\{x\in \mathbb{R}^N \mid Ax\leq b, x\geq 0\}$
is a nonempty polytope and thus, the inner maximization
problem of~\eqref{eq:minMaxSimple} has a finite optimum
value for any $r\in \mathbb{R}^n$.
This also shows that the optimum value
of~\eqref{eq:minMaxSimple}, and hence also of \eqref{eq:dualLPfull}
and \eqref{eq:minMax}, is finite.
Problems~\eqref{eq:primLPfull} and~\eqref{eq:dualLPfull} are
a primal-dual pair of linear programs. For this pair of LPs,
strong duality holds because~\eqref{eq:dualLPfull},
and therefore also~\eqref{eq:primLPfull}, has a finite
optimum value.
Finally~$\max_{\geq 0} L(\lambda)$ is the same as the optimum
value of~\eqref{eq:dualLPfull} by Lagrangian duality.

It remains to observe that the optimal value of the above
problems is a lower bound to $\OPT$.
We recall that by Lemma~\ref{lem:intObj},
problem~\eqref{eq:minMaxExact}
is a rephrasing of the original interdiction
problem~\eqref{eq:initProb}, and thus also has optimal
value $\OPT$. Finally,~\eqref{eq:minMax} is obtained
from~\eqref{eq:minMaxExact} by relaxation the integrality
condition on $r$. Thus, the optimum value
of~\eqref{eq:minMax}---which is also the optimum
value of~\eqref{eq:minMaxSimple}, \eqref{eq:dualLPfull},
\eqref{eq:primLPfull}, and $\max_{\lambda \geq 0} L(\lambda)$---is
less or equal to $\OPT$, as claimed.
\end{proof}

The following theorem shows that we can efficiently
compute an optimal dual multiplier $\lambda^*$ together
with two integral vectors $r^1, r^2$ that are optimal
solutions to $LP(\lambda^*)$, one of which will turn
out to be a $2$-pseudoapproximation to the considered
interdiction problem~\eqref{eq:initProb}.

\begin{theorem}\label{thm:canComputeRs}
There is an efficient algorithm to compute a maximizer
$\lambda^*$ of $\max_{\lambda \geq 0} L(\lambda)$, and
two vectors $r^1,r^2\in \mathbb{Z}_{\geq 0}^N$ such that:
\begin{enumerate}[(i)]

\item $\exists$ integral $y^1,y^2\in \mathbb{Z}^m$ such
that both $(r^1,y^1)$ and $(r^2,y^2)$ are optimal
solutions to $LP(\lambda^*)$.

\item $c^T r^1 \geq B\geq c^T r^2$.
\end{enumerate}
\end{theorem}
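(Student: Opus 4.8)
The plan is to use that $L$, being a Lagrangian dual value, is a concave and piecewise-linear function of $\lambda$: every feasible integral pair $(r,y)$ for $LP(\lambda)$ contributes the affine map $\lambda\mapsto b^Ty+\lambda(c^Tr-B)$, and $L(\lambda)=\min_\lambda L(\lambda)$ is their lower envelope. Hence the superdifferential of $L$ at any $\lambda$ is exactly the set of slopes $c^Tr-B$ carried by the optimal pieces, and a maximizer $\lambda^*$ is characterized by $0$ lying in this set, i.e.\ by the existence of one optimal solution with slope $\ge 0$ (that is, $c^Tr^1\ge B$) and one with slope $\le 0$ (that is, $c^Tr^2\le B$). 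These are precisely the vectors the theorem demands, so the whole task reduces to locating $\lambda^*$ and reading off the two straddling optimal pieces.

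The key engine is that for each fixed $\lambda\ge 0$ the problem $LP(\lambda)$ in~\eqref{eq:dualLPlag} is, up to the constant $-\lambda B$, the dual~\eqref{eq:box-DP} of the box-constrained LP~\eqref{eq:primLPfull} with upper bound $u=\lambda c$. Box-$w$-DI solvability therefore lets me, in polynomial time, evaluate $L(\lambda)$ and obtain an integral optimal $r$ together with the guaranteed existence of an integral $y$; I recover $b^Ty=L(\lambda)-\lambda(c^Tr-B)$ at no extra cost. I would then binary search on $\lambda$ driven by the sign of the returned slope $s:=c^Tr-B$, maintaining a bracket $[a,b]$ with a stored optimal solution of slope $s_a\ge 0$ at $a$ and $s_b<0$ at $b$. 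For the right end I take $b=\Lambda$, a value so large that the box $u=\lambda c$ is inactive, forcing the optimal $r$ to be $0$ and the slope to be $-B<0$; for the left end I query a small $\lambda_0>0$ and set $a=\lambda_0$ if its slope is $\ge 0$, branching to the degenerate case $\lambda^*=0$ otherwise. If any query ever returns slope exactly $0$, that single solution already serves as both $r^1$ and $r^2$.

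To bound the number of bisections I would invoke the input-size convention of box-$w$-DI solvability: for each queried $\lambda$, kept a rational of polynomial bit-size, the oracle returns $L(\lambda)$ and an integral $r$ of polynomial encoding, and the recovered intercept $b^Ty$ is then also of polynomial bit-size. Consequently every optimal piece has slope and intercept of polynomial bit-size, so the breakpoints of $L$ -- the pairwise crossings $(b^Ty_2-b^Ty_1)/(c^Tr_1-c^Tr_2)$ of such pieces -- are rationals of polynomial bit-size, and any two distinct ones differ by at least $2^{-p}$ for some fixed polynomial $p$; this also bounds the least positive breakpoint from below, which is what makes the choice of $\lambda_0$ and the detection of $\lambda^*=0$ legitimate. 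Since $\lambda^*\in[0,\Lambda]$ with $\Lambda$ of polynomial bit-size, bisecting $[0,\Lambda]$ for polynomially many steps shrinks the bracket below $2^{-p}$ and thereby isolates a single breakpoint, which must be $\lambda^*$.

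For the extraction I set $r^1:=r_a$ and $r^2:=r_b$ from the final bracket, so $c^Tr^1-B=s_a\ge 0$ and $c^Tr^2-B=s_b<0$, giving~(ii). The two pieces $\ell_a,\ell_b$ they carry are optimal at $a$ and $b$ respectively, and because $(a,b)$ contains no breakpoint other than $\lambda^*$ (after a negligible perturbation ensuring $a,b$ themselves are not breakpoints), each piece stays optimal up to $\lambda^*$; hence both are optimal at their crossing point, which therefore equals $\lambda^*=(b^Ty_b-b^Ty_a)/(c^Tr_a-c^Tr_b)$, computed exactly from the recovered data. This yields~(i), with integral $y^1,y^2$ furnished by the oracle. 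The degenerate case $\lambda^*=0$ is handled separately: there $r^2:=r_{\lambda_0}$ (slope $<0$, optimal at $0$ by continuity) works, while an $r^1$ with $c^Tr^1\ge B$ is obtained by padding any optimal $r$ at $\lambda=0$ in a single coordinate, which preserves optimality because $r$ is cost-free in the objective when $\lambda=0$. I expect the main obstacle to be exactly this bit-complexity and breakpoint-spacing bound, compounded by the fact that the oracle returns only one optimal piece per $\lambda$, which forces the two-endpoint extraction above rather than a direct description of the optimal face.
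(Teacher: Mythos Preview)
Your approach is essentially the same as the paper's: both prove the theorem by bisection on the piecewise-linear concave Lagrangian dual $L(\lambda)$, using the box-$w$-DI oracle at each probe to evaluate $L$ and extract an integral optimal $r$, maintaining a bracket with nonnegative slope on the left and negative slope on the right, and finally recovering $\lambda^*$ as the crossing point of the two straddling affine pieces.

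The only substantive differences are in the concrete bounds. The paper shows explicitly that $\lambda^*\le\nu^*$ (Lemma~\ref{lem:LSBounded}), and, more interestingly, that every breakpoint of $L$ is a rational with denominator at most $c(N)$, because any \emph{integral} optimal $r$ for $LP(\lambda)$ is actually a $\{0,1\}$-vector: this follows from $\psi(r)=\psi(r\wedge 1)$ (Lemma~\ref{lem:dropUpperBound1}), which in turn rests on $w$-down-closedness. Hence adjacent breakpoints are at least $1/(c(N))^2$ apart and $1+\lfloor\log_2(\nu^*(c(N))^2)\rfloor$ bisection steps suffice. Your generic bit-complexity argument is valid under the paper's input-size convention, but the paper's bound is sharper and structurally more informative. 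The paper also sidesteps your separate $\lambda^*=0$ branch by initializing directly at $\lambda^1=0$ with $r^1=\chi^N$, which is optimal for $LP(0)$ (since $r$ is absent from the objective there) and satisfies $c^T\chi^N=c(N)>B$.
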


Before proving Theorem~\ref{thm:canComputeRs}, we show
that it implies our main result, Theorem~\ref{thm:2pseudoapprox}.

\begin{theorem}\label{thm:oneRIsGood}
Let $\lambda^*$ be a maximizer of $\max_{\lambda \geq 0} L(\lambda)$,
let $(r^1,y^1), (r^2,y^2)$ be two optimal solutions
to $LP(\lambda^*)$
with $c^T r^1 \geq B\geq c^T r^2$,
and let $\alpha>0$.
Then at least one of the following two conditions holds:
\begin{enumerate}[(i)]
\item\label{item:r1Good} $c^T r^1 \leq (1+\frac{1}{\alpha})B$, or
\item\label{item:r2Good} $b^T y^2 \leq (1+\alpha) L(\lambda^*)$.
\end{enumerate}
Furthermore, if~\eqref{item:r1Good} holds, then $r^1\wedge 1$
is the characteristic vector of a
$(1,1+\frac{1}{\alpha})$-approximation to~\eqref{eq:initProb}.
If~\eqref{item:r2Good} holds, then $r^2\wedge 1$ is the
characteristic vector of a
$(1+\alpha,1)$-approximation to~\eqref{eq:initProb}.
\end{theorem}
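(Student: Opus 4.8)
The plan is to translate the abstract LP quantities $b^T y^1, b^T y^2$ into the actual interdiction values of the two candidate solutions, and then to extract the claimed dichotomy from the optimality equations at $\lambda^*$. First I would observe that, since $r^1,r^2$ are integral and nonnegative, the vectors $r^i\wedge 1$ are exactly the characteristic vectors $\chi^{R^i}$ of their supports $R^i:=\{e\in N\mid r^i(e)\ge 1\}$. The key identity to establish is $b^T y^i=\phi(R^i)$ for $i=1,2$. To see this I would decompose $LP(\lambda^*)$ as $\min_{r\ge 0}[\psi(r)+\lambda^* c^T r]-\lambda^* B$, using that for a fixed $r$ the inner minimization over $y$ is precisely the LP dual of $\psi(r)$, so its optimum equals $\psi(r)$. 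Optimality of $(r^i,y^i)$ then forces $y^i$ to be optimal for this inner problem at $r=r^i$, giving $b^T y^i=\psi(r^i)$; Lemma~\ref{lem:dropUpperBound1} yields $\psi(r^i)=\psi(r^i\wedge 1)=\psi(\chi^{R^i})$, and Lemma~\ref{lem:intObj} gives $\psi(\chi^{R^i})=\phi(R^i)$. Since $c>0$ and $r^i\ge r^i\wedge 1$, the budget satisfies $c(R^i)=c^T(r^i\wedge 1)\le c^T r^i$; in particular $c(R^2)\le c^T r^2\le B$, so $R^2$ is budget-feasible.

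Next I would write out the optimality conditions. Both $(r^1,y^1)$ and $(r^2,y^2)$ attain $L(\lambda^*)$, so
\[
b^T y^1+\lambda^*(c^T r^1-B)=L(\lambda^*)=b^T y^2+\lambda^*(c^T r^2-B).
\]
Each summand $\psi(r^i)=b^T y^i$ is nonnegative (the maximization defining $\psi$ always admits $x=0$, since $0\in\conv(\mathcal{X})$), so the first equation together with $c^T r^1\ge B$ gives $\lambda^*(c^T r^1-B)\le L(\lambda^*)$. Now suppose~\eqref{item:r1Good} fails, i.e.\ $c^T r^1-B>B/\alpha$. For $\lambda^*>0$ this yields $\lambda^* B/\alpha<\lambda^*(c^T r^1-B)\le L(\lambda^*)$, hence $\lambda^* B\le\alpha L(\lambda^*)$ (and the bound holds trivially when $\lambda^*=0$). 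Rearranging the second equation then gives $b^T y^2=L(\lambda^*)+\lambda^*(B-c^T r^2)\le L(\lambda^*)+\lambda^* B\le(1+\alpha)L(\lambda^*)$, which is exactly~\eqref{item:r2Good}. This establishes the dichotomy.

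Finally I would convert each branch into an approximation guarantee using $L(\lambda^*)\le\OPT$ from Lemma~\ref{lem:optValues}. If~\eqref{item:r1Good} holds then $c(R^1)\le c^T r^1\le(1+\tfrac{1}{\alpha})B$, while $\phi(R^1)=b^T y^1=L(\lambda^*)-\lambda^*(c^T r^1-B)\le L(\lambda^*)\le\OPT$, so $r^1\wedge 1=\chi^{R^1}$ is a $(1,1+\tfrac{1}{\alpha})$-approximation. If~\eqref{item:r2Good} holds then $R^2$ is budget-feasible and $\phi(R^2)=b^T y^2\le(1+\alpha)L(\lambda^*)\le(1+\alpha)\OPT$, so $r^2\wedge 1$ is a $(1+\alpha,1)$-approximation. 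I expect the main obstacle to be the first step: carefully justifying $b^T y^i=\phi(R^i)$, i.e.\ that the $y$-component of the jointly optimal Lagrangian solution really computes the true interdiction value of the integral removal set $R^i$. This is where the structural hypotheses enter --- integrality of $r^i$ to identify $r^i\wedge 1$ with a characteristic vector, and $w$-down-closedness through Lemmas~\ref{lem:dropUpperBound1} and~\ref{lem:intObj}. Once this bridge is in place, the dichotomy and the rounding estimates are routine manipulations of the two optimality equations.
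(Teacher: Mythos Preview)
Your proof is correct, but it differs from the paper's in two notable ways. First, you establish the \emph{equality} $b^T y^i=\psi(r^i)$ by decomposing $LP(\lambda^*)$ into an outer minimization over $r$ and an inner dual of $\psi(r)$; the paper never needs this and works only with the weak-duality inequality $b^T y^i\ge\psi(r^i)$ (from $y^i$ being feasible for the dual of the LP defining $\psi(r^i)$). Your equality is true and the argument is clean, but it is more than is required. Second, for the dichotomy the paper argues by contradiction: it takes the convex combination $(r_\mu,y_\mu)$ with $\mu=\alpha/(1+\alpha)$, drops the nonnegative terms $\mu\, b^T y^1$ and $(1-\mu)\lambda^* c^T r^2$, and obtains $L(\lambda^*)>L(\lambda^*)$ if both \eqref{item:r1Good} and \eqref{item:r2Good} fail. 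You instead proceed directly: from $b^T y^1\ge 0$ and the first optimality equation you extract $\lambda^*(c^T r^1-B)\le L(\lambda^*)$, and if \eqref{item:r1Good} fails this gives $\lambda^* B\le\alpha L(\lambda^*)$, which you feed into the second equation to obtain \eqref{item:r2Good}. Your route is arguably more transparent and avoids the convex-combination trick; the paper's route has the minor advantage of being symmetric in the two solutions and not needing to single out the $\lambda^*=0$ case. The ``main obstacle'' you anticipate is thus somewhat overstated: the bridge $b^T y^i\ge\psi(r^i)=\psi(r^i\wedge 1)=\phi(R^i)$ via Lemmas~\ref{lem:dropUpperBound1} and~\ref{lem:intObj} already suffices, and the joint optimality of $(r^i,y^i)$ need not be unpacked further.
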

\begin{proof}
Before showing that either~\eqref{item:r1Good}
or~\eqref{item:r2Good} holds, we show the second part
of the theorem.

Assume first that~\eqref{item:r1Good} holds.
We recall that problem~\eqref{eq:initProb}
and~\eqref{eq:minMaxExact} are equivalent
due to Lemma~\ref{lem:intObj}. Thus, the objective
value of the interdiction problem~\eqref{eq:initProb}
that corresponds to $r^1\wedge 1$ is given
by $\psi(r^1\wedge 1)$ which, by
Lemma~\ref{lem:dropUpperBound1}, is equal to
$\psi(r^1)$.
Hence, to show that $r^1\wedge 1$ is a
$(1,1+\frac{1}{\alpha})$-approximation, it suffices
to prove $\psi(r^1) \leq L(\lambda^*)$, because
$L(\lambda^*)\leq \OPT$ by Lemma~\ref{lem:optValues}.

Indeed, $\psi(r^1)\leq L(\lambda^*)$ holds due to:
\begin{align*}
L(\lambda^*) &= b^T y^1 - \lambda^*(B - c^T r^1)
  && \text{($(r^1,y^1)$ is a maximizer of $LP(\lambda^*)$)}\\
  &\geq b^T y^1
    && \text{($B\leq c^T r^1$ and $\lambda^*\geq 0$)}\\
  &\geq \psi(r^1)
    && \text{($y^1$ is a feasible solution to the dual of the LP
              defining $\psi(r^1)$)}.
\end{align*}
Similarly, if~\eqref{item:r2Good} holds then the 
objective value corresponding to $r^2\wedge 1$ is
\begin{align*}
\psi(r^2) &\leq b^T y^2
    && \text{($y^2$ is a feasible solution to the dual of the LP
              defining $\psi(r^2)$)}\\
  &\leq (1+\alpha)L(\lambda^*)
    && \text{(by~\eqref{item:r2Good})}\\
  &\leq (1+\alpha)\OPT
    && \text{(by Lemma~\ref{lem:optValues})}.
\end{align*}
Since $r^2$ satisfies $c^T r^2\leq B$, the
characteristic vector $r^2\wedge 1$ is therefore indeed a
$(1+\alpha,1)$-approximation to~\eqref{eq:initProb}.

Hence, it remains to show that at least one
of~\eqref{item:r1Good} and~\eqref{item:r2Good} holds.
Assume for the sake of contradiction that both do not
hold.
Because both $(r^1,y^1)$ and $(r^2,y^2)$ are maximizers
of $LP(\lambda^*)$, also any convex combination of these
solutions is a maximizer. In particular let
$\mu = \frac{\alpha}{1+\alpha}$ and consider
the maximizer $(r_{\mu}, y_{\mu})$ of $LP(\lambda^*)$, where
$r_{\mu} = \mu r^1 + (1-\mu) r^2$ and
$y_{\mu} = \mu y^1 + (1-\mu) y^2$.
We obtain
\begin{align*}
L(\lambda^*) &= b^T y_{\mu} - \lambda^* (B-c^T r_{\mu}) \\
  &\geq (1-\mu) b^T y^2 - \lambda^* (B- \mu c^T r^1)
   && \text{(ignoring $\mu b^T y^1$ and $(1-\mu)\lambda^* c^T r^2$,
which are both $\geq 0$)}\\
  &= \frac{1}{1+\alpha} b^T y^2
     - \lambda^* \left(B - \frac{\alpha}{1+\alpha} c^T r^1\right)
    && \text{(using $\mu=\frac{\alpha}{1+\alpha}$)}\\
  &> L(\lambda^*)
    && \text{(using that both~\eqref{item:r1Good}
              and~\eqref{item:r2Good} do not hold)},
\end{align*}
thus leading to a contradiction and proving the theorem.

\end{proof}

Theorem~\ref{thm:canComputeRs} together with
Theorem~\ref{thm:oneRIsGood} imply our main result,
Theorem~\ref{thm:2pseudoapprox}, due to the following.
Theorem~\ref{thm:canComputeRs} guarantees that we can
compute efficiently $\lambda^*, r^1, r^2$ as needed in
Theorem~\ref{thm:oneRIsGood}. Then, depending whether
condition~\eqref{item:r1Good} or~\eqref{item:r2Good} holds,
we either return $r^1\wedge 1$ or $r^2\wedge 1$ as our
$2$-pseudoapproximation.
Notice that to check whether~\ref{item:r2Good} holds,
we have to compute $L(\lambda^*)$. This can be done efficiently
due to the fact that our description of $\conv(\mathcal{X})$
is box-$w$-DI solvable. More precisely, as already discussed,
$LP(\lambda^*)$ is the dual of~\eqref{eq:primLPfull}
for $\lambda=\lambda^*$
whose optimal value can be computed by box-$w$-DI solvability.
Hence, it remains to prove Theorem~\ref{thm:canComputeRs}.

\subsection{Proof of Theorem~\ref{thm:canComputeRs}}

First we discuss some basic properties of $L(\lambda)$.
We start by observing that $L(\lambda)$ is finite for
any $\lambda >0$. This follows by the fact that
$L(\lambda)$ is the optimal value of~\eqref{eq:primLPfull}
when $\lambda$ is considered fixed. More precisely,
for any fixed $\lambda\geq 0$, the problem~\eqref{eq:primLPfull}
is feasible and bounded. It is feasible because $z=0$
is feasible since $b\geq 0$. Furthermore, it is bounded
since by assumption
$\conv(\mathcal{X})=\{z\in \mathbb{R}^N \mid Az\leq b, z\geq 0\}$
is a polytope.
Additionally, $L(\lambda)$ has the following properties,
which are true for any Lagrangian dual of a finite LP
(see~\cite{boyd_2004_convex} for more details):
\begin{itemize}
\item $L(\lambda)$ is piecewise linear.
\item Let $[\lambda_1, \lambda_2]$ be one of the linear segments
of $L(\lambda)$, let $t\in (\lambda_1,\lambda_2)$, and
$(r_t, y_t)$ be an optimal solution to $L(t)$. Then,
$(r_t, y_t)$ is an optimal solution for the whole
segment, i.e., for any $LP(\lambda)$
with $\lambda\in [\lambda_1,\lambda_2]$.
As a consequence, the slope of the segment is
$c^T r_t - B$.
\end{itemize}
Also, we recall that $L(\lambda)$ can be evaluated efficiently
for any $\lambda \geq 0$; since~\eqref{eq:primLPfull} is box-$w$-DI
solvable, it can be solved for any fixed $\lambda\geq 0$.

We will find an optimal multiplier $\lambda^*\geq 0$ using
bisection. For this, we start by showing two key properties
of $L(\lambda)$. First, we show that any optimal multiplier
$\lambda^*$ to~$L(\lambda)$ is not larger than some upper
bound with polynomial input length. 
Second, we show that each
linear segment of $L(\lambda)$ has some minimal
width, which makes it possible to reach it with a
polynomial number of iterations using bisection.

We recall that
\begin{equation*}
\nu^*=\max\{w^T x \mid Ax \leq b, x\geq 0\}
  = \min \{b^T y \mid A^T y \geq w, y \geq 0\}
\end{equation*}
is the optimal value of the nominal problem without
interdiction, and that $\log(\nu^*)$ is part of the
input size.

\begin{lemma}\label{lem:LSBounded}
If $\lambda^*$ is a maximizer of $L(\lambda)$, 
then $\lambda^*\leq \nu^*$.
Furthermore, for every $\lambda \geq \nu^*$,
$r=0$ is an optimal solution to $LP(\lambda)$.
\end{lemma}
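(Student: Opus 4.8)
The plan is to reduce everything to the observation that, for a fixed $\lambda\ge 0$, the minimization \eqref{eq:dualLPlag} is, up to the additive constant $-\lambda B$, exactly the dual \eqref{eq:box-DP} of the box-constrained program \eqref{eq:box-LP} with box vector $u=\lambda c$. Concretely, I would first rewrite $L(\lambda) = -\lambda B + \min\{b^T y + \lambda c^T r \mid A^T y + r \ge w,\ y,r\ge 0\}$ and note that the inner minimization is precisely \eqref{eq:box-DP} for $u=\lambda c$. Since $\lambda c \ge 0$ and the associated primal \eqref{eq:box-LP} is feasible ($x=0$ works because $b\ge 0$), box-$w$-DI solvability furnishes an \emph{integral} optimal solution $(y^*,r^*)$ with $r^*\in\mathbb{Z}^N_{\ge 0}$.

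Next I would record two elementary bounds on this inner minimum. On the one hand, setting $r=0$ turns the inner program into $\min\{b^T y \mid A^T y \ge w,\ y\ge 0\}$, whose value is $\nu^*$ by LP duality; hence the inner minimum is at most $\nu^*$, and proving that $r=0$ is optimal is equivalent to proving the inner minimum equals $\nu^*$. On the other hand, I would bound the integral optimum from below: if $r^*=0$, feasibility forces $A^T y^*\ge w$ and thus $b^T y^*\ge \nu^*$; while if $r^*\neq 0$, then integrality of $r^*$ together with $c:N\to\mathbb{Z}_{>0}$ gives $c^T r^*\ge 1$, so the objective is at least $\lambda c^T r^*\ge \lambda\ge \nu^*$ (this is the only place the hypothesis $\lambda\ge\nu^*$ is used, and the term $b^T y^*\ge 0$ is dropped via $b\ge 0$, $y^*\ge 0$). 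Combining the two directions shows the inner minimum equals $\nu^*$ and is attained at $r=0$, which is the second assertion of the lemma.

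The first assertion then follows at once: for every $\lambda\ge\nu^*$ we have $L(\lambda)=\nu^*-\lambda B$, which is strictly decreasing because $B>0$; hence $L(\lambda)<L(\nu^*)$ for all $\lambda>\nu^*$, so no maximizer can exceed $\nu^*$.

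I expect the main obstacle to be pinpointing why integrality is indispensable rather than a mere convenience. The naive approach---pick a primal optimum $x^*$ of the nominal problem and argue $x^*_e\le\nu^*$ coordinatewise---\emph{fails} for general $w$-down-closed systems: the one-dimensional segment $\conv\{(0,0),(1,5)\}$ with $w=(1,0)$ is $w$-down-closed, yet a fractional $r\neq 0$ strictly improves the inner minimum for $\lambda\in[1,5)$. What rescues the argument is precisely that box-$w$-DI solvability delivers an \emph{integral} dual optimum, so a nonzero $r^*$ must cost at least one unit of $c$; this single nontrivial input is exactly what produces the clean threshold $\nu^*$. I would therefore be careful to invoke box-$w$-DI solvability explicitly and to verify its feasibility precondition, rather than relying on any purely polyhedral or down-closedness reasoning.
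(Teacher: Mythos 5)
Your proof is correct and follows essentially the same route as the paper's: both hinge on box-$w$-DI solvability furnishing an integral optimal dual pair $(y^*,r^*)$ for the inner minimization (which is \eqref{eq:box-DP} with $u=\lambda c$, equivalently the dual of \eqref{eq:primLPfull} at fixed $\lambda$), the observation that a nonzero integral $r^*$ together with $c\in\mathbb{Z}^N_{>0}$ forces $c^T r^*\geq 1$ and hence an objective of at least $\lambda\geq\nu^*$ (after dropping $b^Ty^*\geq 0$), and the strict decrease of $L(\lambda)=\nu^*-\lambda B$ beyond $\nu^*$ since $B>0$. The only difference is presentational---you sandwich the inner minimum between $\nu^*$ and $\nu^*$ directly where the paper runs the same case analysis as a proof by contradiction---and your closing remark correctly identifies dual integrality, not $w$-down-closedness, as the load-bearing hypothesis, which is precisely where the paper invokes it as well.
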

\begin{proof}
Let $r=0\in \mathbb{Z}^N$ and $y^*$ be a
minimizer of $\min\{b^T y \mid A^T y \geq w, y\geq 0\}$.
Hence, in particular, $b^T y^* = \nu^*$.
We first show that for any $\lambda\geq \nu^*$, the
pair $(r,y^*)$ is a minimizer of $LP(\lambda)$.
Assume for the sake of contradiction that there is
some $\lambda \geq \nu^*$ such that $(r,y^*)$
is not a minimizer of $LP(\lambda)$. Let
$(r',y')$ be a minimizer of $LP(\lambda)$ which,
because the dual of $LP(\lambda)$ is box-$w$-DI, can be assumed to be integral.
Clearly, we must have $r'\neq 0=r$, since for
$r=0$, the vector $y^*$ attains by definition
the smallest value in $LP(\lambda)$.
Hence, we obtain
\begin{align*}
b^T y^* - \lambda B &> b^T y' - \lambda B + \lambda c^T r'
  && \text{($(r',y')$ attains a smaller value than $(r,y^*)$ in $LP(\lambda)$)}\\
&\geq -\lambda B + \lambda c^T r'
  && \text{($b^Ty'\geq 0$ since $b\geq 0$ and $y'\geq 0$)},
\end{align*}
which implies
\begin{equation*}
\nu^* > \lambda c^T r'.
\end{equation*}
However, this is a contradiction since $\lambda \geq \nu^*$,
and $c^T r'\geq 1$ because $c\in \mathbb{Z}^N_{> 0}$
and $r'\in \mathbb{Z}^N_{\geq 0}$ is nonzero.
Thus, $(r,y^*)$ is indeed a minimizer of $LP(\lambda)$
for any $\lambda \geq \nu^*$.
However, since $B>0$, this implies
\begin{equation*}
L(\nu^*) = b^T y - \nu^* B >
b^T y - \lambda B = L(\lambda) \qquad \forall \lambda > \nu^*,
\end{equation*}
thus implying the lemma.
\end{proof}

Hence, Lemma~\ref{lem:LSBounded} implies that to find
a maximizer $\lambda^*$ of $L(\lambda)$, we only have
to search within the interval $[0,\nu^*]$.

\begin{lemma}\label{lem:segmentsAreLarge}
Each segment of the piecewise linear function $L(\lambda)$
has width at least $\frac{1}{(c(N))^2}$.
\end{lemma}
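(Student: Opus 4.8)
The plan is to show that every breakpoint of the concave piecewise-linear function $L(\lambda)$ is a rational number whose denominator is at most $c(N)$. The width bound then follows from the elementary fact that two distinct rationals with denominators at most $c(N)$ differ by at least $1/(c(N))^2$.

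First I would record what a breakpoint looks like. By the properties of $L$ recalled above, on each linear segment an optimal solution $(y,r)$ of $LP(\lambda)$ can be chosen integral (this is exactly what box-$w$-DI solvability provides), the segment's affine form is $\lambda\mapsto b^Ty-\lambda(B-c^Tr)$, and its slope is $c^Tr-B$. Picking integral optima $(y^L,r^L)$ and $(y^R,r^R)$ from the two segments adjacent to a breakpoint $\lambda_0$ and equating their affine forms gives
\[
\lambda_0=\frac{b^Ty^L-b^Ty^R}{c^Tr^R-c^Tr^L}.
\]
Since $b\in\mathbb{Z}^m$ (as in all the box-TDI descriptions we use) and $y^L,y^R$ are integral, the numerator is an integer; since $c$ and $r^L,r^R$ are integral, the denominator is a nonzero integer. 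Hence $\lambda_0=p/q$ with $q=|c^Tr^R-c^Tr^L|$.

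The crux is to bound this denominator by $c(N)$, i.e.\ to show $0\le c^Tr\le c(N)$ on every segment. Because $L$ is concave, its slope $c^Tr-B$ is largest on the leftmost segment, so it suffices to bound $c^Tr$ there. I would compute the right slope at $\lambda=0$: the inner minimization has value $0$ at $\lambda=0$, realized by $y=0,\,r=w$, and among all $\lambda=0$-optimal dual solutions this choice has $c^Tr=c^Tw\le c(N)$; since the initial slope equals the minimum of $c^Tr$ over the $\lambda=0$-optima, it is at most $c(N)$. Concavity then forces $0\le c^Tr\le c(N)$ on every segment, so $1\le q\le c(N)$ for every breakpoint.

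Finally, any two consecutive breakpoints $p_1/q_1\neq p_2/q_2$ with $q_1,q_2\le c(N)$ satisfy $|p_1/q_1-p_2/q_2|=|p_1q_2-p_2q_1|/(q_1q_2)\ge 1/(q_1q_2)\ge 1/(c(N))^2$; the same estimate applies at the endpoints $\lambda=0$ and $\lambda=\nu^*$ (both integers, cf.\ Lemma~\ref{lem:LSBounded}), giving the claimed lower bound on every segment width. I expect the main obstacle to be the slope bound $c^Tr\le c(N)$: one must argue carefully that, although $r$ is only constrained to be nonnegative in $LP(\lambda)$ and could a priori be large, the relevant (largest) slope is governed by the $\lambda\to 0^+$ behaviour and hence by $c^Tw$.
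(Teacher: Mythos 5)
Your proof is correct and shares the paper's overall skeleton---every kink of $L(\lambda)$ is a rational with denominator at most $c(N)$, and distinct rationals with denominators bounded by $c(N)$ are at least $\frac{1}{(c(N))^2}$ apart---but your route to the denominator bound is genuinely different. The paper shows that the integral optimal $r$-vectors on the two segments meeting at a kink can be taken in $\{0,1\}^N$: it rewrites $L(\lambda)=\min_{r\geq 0}\left(-\lambda B + c^T r +\psi(r)\right)$ and combines $\psi(r)=\psi(r\wedge 1)$ (Lemma~\ref{lem:dropUpperBound1}) with $c\in\mathbb{Z}^N_{>0}$ to conclude that an integral minimizer satisfies $r=r\wedge 1$, whence $c^T(r^1-r^2)\leq \|c\|_1=c(N)$. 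You instead bound $c^Tr\leq c(N)$ on every segment via concavity anchored at $\lambda=0$: since $b\geq 0$ (which holds because $0\in\conv(\mathcal{X})$ by $w$-down-closedness), the pair $(y,r)=(0,w)$ is optimal for $LP(0)$ with $L(0)=0$, so the leftmost slope is at most $c^Tw-B\leq c(N)-B$, and concavity propagates this to all segments. This is a valid and arguably more elementary alternative, bypassing the clipping machinery of Lemma~\ref{lem:clipR} at this point, though it yields less structural information (the paper's argument shows the relevant dual vectors are actually $\{0,1\}$-vectors, not merely of bounded cost). One caveat: for integrality of the numerator $b^Ty^L-b^Ty^R$ you assume $b\in\mathbb{Z}^m$, which box-$w$-DI solvability does not by itself guarantee; the paper instead observes that $b^Ty^i=\psi(r^i)$ is the optimum of an integral objective over the integral polytope $\conv(\mathcal{X})$, hence an integer---you should patch your argument with this one-line observation to keep the lemma in full generality. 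Also, your phrasing about the endpoint $\lambda=\nu^*$ is unnecessary: the last segment of $L(\lambda)$ extends to $+\infty$ and needs no bound, while the first segment has width at least $\frac{1}{c(N)}$ because $0$ has denominator $1$; neither point affects correctness.
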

\begin{proof}
We start by deriving a property of the kinks of $L(\lambda)$,
namely that they correspond to a rational value $\lambda$
whose denominator is at most $\frac{1}{c(N)}$. Later
we will derive from this property that the distance between
any two kinks is at least $\frac{1}{(c(N))^2}$.

Let $\overline{\lambda} >0$ be the value of a kink of $L(\lambda)$,
i.e., there is one segment of the piecewise linear function
$L(\lambda)$ that ends at $\overline{\lambda}$ and one that
starts at $\overline{\lambda}$. We call the segment ending
at $\overline{\lambda}$ the \emph{left segment} and the one
starting at $\overline{\lambda}$ the \emph{right segment}.
Let $(r^1,y^1)$ be an optimal solution for all $LP(\lambda)$
where $\lambda$ is within the left segment.
Similarly, let $(r^2,y^2)$ be an optimal solution for
the right segment. By box-$w$-DI solvability, we can choose
$(r^i,y^i)$ for $i\in \{1,2\}$ to be integral.
We start by showing that 
$r^1$ and $r^2$ are $\{0,1\}$-vectors,
i.e, $r^1,r^2\in \{0,1\}^N$.
We can rewrite $L(\lambda)$ as follows:
\begin{equation}\label{eq:LAsPsi}
\begin{aligned}
L(\lambda) &= \min\{b^T y - \lambda(B-c^T r)
\mid A^T y + r \geq w, y\geq 0, r\geq 0\}\\
 &=\min_{r\geq 0}\left(-\lambda B + c^T r +
  \max\{(w-r)^T x \mid Ax\leq b, x\geq 0\}\right)\\
 &=\min_{r\geq 0} \left(-\lambda B + c^T r +\psi(r)\right),
\end{aligned}
\end{equation}
where the second equality follows by dualizing the LP
of the first line for a fixed $r\geq 0$, and the
third equality follows by the definition of $\psi$.
By Lemma~\ref{lem:dropUpperBound1}, we have
$\psi(r)=\psi(r\wedge 1)$, and since $c\in \mathbb{Z}_{>0}$,
this implies that a minimizing $r$ is such that
$r=r\wedge 1$. Thus an integral minimizing $r$ satisfies
$r\in \{0,1\}^N$, as desired.

The slope of the left segment is $\beta_1 = -B + c^T r^1$
and the slope of the right segment is
$\beta_2 = -B + c^T r^2$.
Let $\alpha_1 = b^T y^1$
and $\alpha_2 = b^T y^2$.
Again using~\eqref{eq:LAsPsi} we have
$\alpha_i = \psi(r^i)$ for $i\in \{1,2\}$.
This implies that $\alpha_i$ for $i\in \{1,2\}$ is
integral because $\psi(r^i)$ is defined as the optimum
of an LP with integral objective vector over an
integral polytope
$\{x\in \mathbb{R}^N \mid Ax\leq b, x\geq 0\}$.

Because $L(\lambda)$ is concave, the slope decreases
strictly at each kink, i.e., $\beta_1 > \beta_2$.
Furthermore, since the left and right segment
touch at $\overline{\lambda}$, we have
\begin{equation*}
\alpha_1 + \overline{\lambda} \beta_1 =
    \alpha_2 + \overline{\lambda} \beta_2.
\end{equation*}
Because $\beta_1 > \beta_2$ and $\overline{\lambda}>0$,
this implies $\alpha_1 < \alpha_2$, and $\overline{\lambda}$
can be written as
\begin{equation*}
\overline{\lambda}
  = \frac{\alpha_2 - \alpha_1}{\beta_1 - \beta_2}.
\end{equation*}
Notice that
\begin{equation*}
\beta_1 - \beta_2 = c^T (r^1-r^2) \leq \| c \|_1 = c(N),
\end{equation*}
where we use the fact that $r^1, r^2\in \{0,1\}^N$
for the inequality.
In summary, any kink $\overline{\lambda}$ is a rational
number $\frac{p}{q}$ with $p,q\in \mathbb{Z}_{>0}$ and
$q\leq c(N)$.
In particular, this implies that the first segment,
which goes from $\lambda=0$ to the first kink,
has width at least
$\frac{1}{c(N)} \geq \frac{1}{(c(N))^2}$.
The last segment clearly has infinite width. Any
other segment is bordered by two kinks
$\lambda_1 = \frac{p_1}{q_1}$ and
$\lambda_2 = \frac{p_2}{q_2}$ with $\lambda_1 < \lambda_2$
and has therefore a width of 
\begin{align*}
\lambda_1 - \lambda_2 &= \frac{p_1 q_2 - p_2 q_1}{q_1 q_2}\\
  &\geq \frac{1}{q_1 q_2}
    && \text{(since $\lambda_1 -\lambda_2 >0$)}\\
  &\geq \frac{1}{(c(N))^2}
    && \text{(since $q_1,q_2 \leq c(N)$)}.
\end{align*}

\end{proof}

We use the bisection procedure Algorithm~\ref{alg:findLR1R2}
to compute $\lambda^*$,
$r^1$, and $r^2$ as claimed by 
Theorem~\ref{thm:canComputeRs}.
Notice that $L(\lambda^1)$ and $L(\lambda^2)$,
as needed by Algorithm~\ref{alg:findLR1R2} to determine
$\lambda^*$, can be computed due to box-$w$-DI solvability.
Algorithm~\ref{alg:findLR1R2} is clearly efficient;
it remains to show its correctness.

\begin{algorithm2e}[h!]
\DontPrintSemicolon
Initialization: $\lambda^1=0$, $\lambda^2=\nu^*$,
$r^1 = \chi^N$, $r^2=0$\;

\For{$i=1,\dots, 1+\lfloor\log_2(\nu^* (c(N))^2) \rfloor$}{
  $\lambda = \frac{1}{2}(\lambda^1+\lambda^2)$.\;
\medskip

  Use box-$w$-DI solvability oracle to compute
  integral $r\in \mathbb{Z}_{\geq 0}$ satisfying that
  there is a $y$ such that $(r, y)$ is an optimal
  solution to $LP(\lambda)$.\;

\medskip

\uIf{$-B + c^T r\geq 0$}{
  $\lambda^1 = \lambda$.\\
  $r^1 = r$.
}
\Else{
  $\lambda^2 = \lambda$.\\
  $r^2 = r$.
}

}

Compute $\lambda^*$ as the intersection of the
two segments at $\lambda^1$ and
$\lambda^2$:
\begin{equation*}
\lambda^* = \frac{L(\lambda^2) - L(\lambda^1)
 - \lambda^2 (-B+c^T r^2) + \lambda^1 (-B + c^T r^1)}
{c^T (r^1 - r^2)}.
\end{equation*}

\medskip

\Return{$\lambda^*, r^1, r^2$.}

\caption{Computing $\lambda^*$, $r^1$ and $r^2$ as claimed
by Theorem~\ref{thm:canComputeRs}}
\label{alg:findLR1R2}

\end{algorithm2e}

\begin{lemma}
$\lambda^*$, $r^1$ and $r^2$ as returned by
Algorithm~\ref{alg:findLR1R2} fulfill the properties
required by Theorem~\ref{thm:canComputeRs}.
\end{lemma}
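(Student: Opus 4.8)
The plan is to verify that Algorithm~\ref{alg:findLR1R2} correctly identifies a maximizer $\lambda^*$ together with integral optimal solutions $(r^1, y^1)$ and $(r^2, y^2)$ to $LP(\lambda^*)$ straddling the budget. The central structural facts I would lean on are those established just before the algorithm: $L(\lambda)$ is piecewise linear and concave on $[0,\nu^*]$ (Lemma~\ref{lem:LSBounded} confines the maximizer to this interval), and each linear segment has width at least $\frac{1}{(c(N))^2}$ (Lemma~\ref{lem:segmentsAreLarge}). Together these give the bisection a quantitative handle: the number of distinct segments over $[0,\nu^*]$ is at most $\nu^*(c(N))^2$, so after $1+\lfloor\log_2(\nu^*(c(N))^2)\rfloor$ halvings the interval $[\lambda^1,\lambda^2]$ is narrower than a single segment width.

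First I would set up the bisection invariant. At each iteration the algorithm evaluates an integral optimal $(r,y)$ for $LP(\lambda)$ at the midpoint $\lambda$, and branches on the sign of the slope $-B + c^T r$. Since the slope of $L$ at a point $\lambda$ with integral optimizer $(r,y)$ equals $c^T r - B$, a nonnegative slope means $\lambda$ lies weakly left of the maximizer (because $L$ is concave, its slope is nonincreasing and the maximizer is where the slope changes sign), and a negative slope means $\lambda$ lies weakly right of it. I would maintain the invariant that $\lambda^1$ always carries an integral optimal solution $r^1$ with $c^T r^1 \geq B$ (nonnegative slope), while $\lambda^2$ carries an integral optimal $r^2$ with $c^T r^2 < B$; the initialization $\lambda^1=0, r^1=\chi^N$ and $\lambda^2=\nu^*, r^2=0$ satisfies this, using $c^T\chi^N = c(N) \geq B$ and, by Lemma~\ref{lem:LSBounded}, $r=0$ being optimal at $\nu^*$ with slope $-B<0$. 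The update rules preserve this invariant exactly. This immediately yields property~(ii) of Theorem~\ref{thm:canComputeRs}, namely $c^T r^1 \geq B \geq c^T r^2$.

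Next I would argue that after the loop terminates, the maximizer $\lambda^*$ lies in $[\lambda^1,\lambda^2]$ and that $r^1, r^2$ are optimal not merely on their own segments but jointly at $\lambda^*$. The key point is that once $\lambda^2 - \lambda^1$ is smaller than the minimum segment width $\frac{1}{(c(N))^2}$, the open interval $(\lambda^1,\lambda^2)$ can contain at most one kink, and in fact the maximizer---being the kink where the slope transitions from $\geq 0$ to $<0$---must be this kink and must lie in $[\lambda^1,\lambda^2]$. Since $(r^1,y^1)$ is optimal throughout the left segment ending at $\lambda^*$ and $(r^2,y^2)$ is optimal throughout the right segment starting at $\lambda^*$, and both segments meet at the kink $\lambda^*$, both solutions are optimal at $\lambda^*$ itself by continuity of $L$ and the fact that optimal solutions extend to segment endpoints. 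This establishes property~(i). Finally, the closed-form for $\lambda^*$ in the algorithm is just the intersection of the two known segment lines $L(\lambda^1) + (\lambda-\lambda^1)(-B+c^T r^1)$ and $L(\lambda^2) + (\lambda-\lambda^2)(-B+c^T r^2)$, and solving this linear equation for $\lambda$ reproduces the displayed formula; the denominator $c^T(r^1-r^2)$ is nonzero because $c^T r^1 \geq B > c^T r^2$.

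The main obstacle, and the step deserving the most care, is justifying that $\lambda^1$ and $\lambda^2$ land on the two segments adjacent to the maximizing kink---rather than, say, on segments one or more kinks away. This is precisely where Lemma~\ref{lem:segmentsAreLarge} does the heavy lifting: because every segment has width at least $\frac{1}{(c(N))^2}$ and the final interval width is below this threshold, no full segment can fit strictly inside $(\lambda^1,\lambda^2)$, forcing $\lambda^1$ and $\lambda^2$ to sit on segments separated by at most one kink. Combined with the sign invariant (slope $\geq 0$ at $\lambda^1$, slope $<0$ at $\lambda^2$), the unique intervening kink is forced to be the maximizer, and the two recorded solutions are exactly the adjacent-segment optimizers. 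I would make sure to handle the boundary subtlety that a midpoint $\lambda$ might itself be a kink, in which case the integral optimizer returned by the oracle is optimal on one of the two incident segments and the slope-based branch still correctly advances the appropriate endpoint.
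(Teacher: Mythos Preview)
Your proposal is correct and follows essentially the same argument as the paper: maintain the sign invariant on the slopes at $\lambda^1,\lambda^2$, use Lemma~\ref{lem:segmentsAreLarge} to conclude that after the loop the interval is shorter than any segment so $\lambda^1,\lambda^2$ sit on adjacent segments meeting at a maximizer, and then recover $\lambda^*$ by intersecting the two segment lines. The paper's case split (one of $\lambda^1,\lambda^2$ being a maximizer versus both lying in segment interiors) is slightly more explicit than your ``unique intervening kink'' phrasing---note the maximizer need not be a single kink if some segment has slope exactly zero---but your argument goes through once that wording is relaxed.
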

\begin{proof}
  Notice that throughout the algorithm
  the following
invariant is maintained: $r^i$ is an optimal
solution to $LP(\lambda^i)$ for $i\in \{1,2\}$.
Furthermore, $-B+c^T r^1 \geq 0$ and $-B+c^T r^2 < 0$.
We highlight that after initialization, these two
invariants are maintained because
$-B + c^T \chi^N = -B + c(N) >0$ because we
assumed $B < c(N)$ to avoid the trivial special
case when everything is interdicted.
Additionally, $-B + c^T 0 = -B < 0$.
Also note that $r^2=0$ is an optimal solution
to $LP(\nu^*)$ by Lemma~\ref{lem:LSBounded}.

Due to this invariant and the fact that $L(\lambda)$
is concave, we know that there is a maximizer
$\lambda^*$ of $L(\lambda)$ within $[\lambda^1, \lambda^2)$.

Observe that the distance $\lambda^2 - \lambda^1$ halves
at every iteration of the for loop.
Consider now $\lambda^1$ and $\lambda^2$ after
the for loop. Their distance is bounded by
\begin{align*}
\lambda^2 - \lambda^1 &=
  \nu^* \left(\frac{1}{2}\right)^{%
     1+\lfloor \log_2(\nu^* (c(N))^2)\rfloor}
 < \nu^* \left(\frac{1}{2}\right)^{\log_2(\nu^*(c(N))^2)}
 = \frac{1}{(c(N))^2}.
\end{align*}

Hence, the distance between $\lambda^2$ and $\lambda^1$ is
less then the width of any segment of the piecewise linear
function $L(\lambda)$, due to Lemma~\ref{lem:segmentsAreLarge}.
This leaves the following options.
Either one of $\lambda^1$ or $\lambda^2$ is a maximizer
of $L(\lambda)$, and the other one is in the interior
of the segment to the left or right, respectively.
Or, neither $\lambda^1$ nor $\lambda^2$ is a maximizer
of $L(\lambda)$. In this case $\lambda^1$ and $\lambda^2$
are in the interior of the segment to the left and
right, respectively, of the unique maximizer $\lambda^*$.
In all of these cases, the solutions $r^1$ and $r^2$ are
both optimal with respect to some maximizer $\lambda^*$ of
$L(\lambda)$, since they are on two segments that
meet on an optimal multiplier~$\lambda^*$.

It remains to prove that the returned $\lambda^*$ is
correct.
Since both $r^1$ and $r^2$ are optimal solutions
to $L(\lambda^*)$ for some maximizer $\lambda^*$,
we have
\begin{equation}\label{eq:bothRiOpt}
L(\lambda^*) = b^T y^1 - \lambda^*(B-c^T r^1)
             = b^T y^2 - \lambda^*(B-c^T r^2).
\end{equation}
Furthermore,
\begin{align*}
L(\lambda^1) &= b^T y^1 - \lambda^1 (B-c^T r^1), \text{ and} \\
L(\lambda^2) &= b^T y^2 - \lambda^2 (B-c^T r^2).
\end{align*}
By replacing $b^T y^i= L(\lambda^i) + \lambda^i (B-c^T r^i)$,
for $i\in \{1,2\}$, in~\eqref{eq:bothRiOpt} and
solving for $\lambda^*$, we obtain
\begin{equation*}
\lambda^* = \frac{L(\lambda^2) - L(\lambda^1)
 - \lambda^2 (-B+c^T r^2) + \lambda^1 (-B + c^T r^1)}
{c^T (r^1 - r^2)},
\end{equation*}
thus showing that the returned $\lambda^*$ is
indeed optimal.
\end{proof}

Hence, even the somewhat limited access through
box-$w$-DI solvability that we assume
to our optimization problem is enough to obtain
an efficient $2$-pseudoapproximation for the
interdiction problem due to the efficiency
of the bisection method described in
Algorithm~\ref{alg:findLR1R2}.
However, in many concrete settings, more efficient
methods can be employed to get an optimal multiplier
$\lambda^*$ and optimal integral dual solutions
$r^1, r^2$. In particular, often one can even obtain
strongly polynomial procedure by employing Megiddo's
parametric search
technique~\cite{megiddo_1979_combinatorial}.
We refer the interested reader
to~\cite{juttner2006budgeted} for a technical details
of how this can be done in a very similar context.

\section{Matroids: weighted case and submodular costs}
\label{sec:matroidInt}

In this section we consider the problem of interdicting
a feasible set $\mathcal{X}\subseteq \{0,1\}^N$ that
corresponds to the independent sets of a matroid.
It turns out that we can exploit structural properties
of matroids to solve natural generalization of the
interdiction problem considered in 
Theorem~\ref{thm:2pseudoapprox}.
In particular, even for arbitrary nonnegative
weight functions $w\in \mathbb{Z}_{\geq 0}^N$, we can
obtain a $2$-pseudoapproximation for the corresponding
interdiction problem.
What's more is that we can achieve this when the
interdiction costs are submodular, rather than just linear.

For clarity, we first discuss in
Section~\ref{sec:weightedCase} a technique to reduce
arbitrary nonnegative weights to the case of
$\{0,1\}$-objectives that was mentioned previously.
In Section~\ref{sec:submCosts}, we then build up and
extend this technique to also deal with submodular
interdiction costs.

\subsection{Weighted case}
\label{sec:weightedCase}

Let $M=(N,\mathcal{I})$ be a matroid, and let
$w:N \rightarrow \mathbb{Z}_{\geq 0}$. The canonical
problem we want to interdict is the problem of
finding a maximum weight independent set, i.e.,
$\max\{w(I) \mid I\in \mathcal{I}\}$.
Let $r_w:2^N \rightarrow \mathbb{Z}_{\geq 0}$ be the
\emph{weighted rank} function, i.e.,
\begin{equation*}
r_w(S) = \max\{w(I) \mid I\subseteq S, I\in \mathcal{I}\}.
\end{equation*}
In words, $r_w(S)$ is the weight of a heaviest independent
set that is contained in $S$.
We recall a basic fact on weighted rank
functions~\cite[Section~44.1a]{schrijver2003combinatorial}.

One key observation we exploit is that the maximum
weight independent set can be rephrased as maximizing
an all-ones objective function over the following
polymatroid:
\begin{equation}\label{eq:descPw}
P_w = \{x\in \mathbb{R}^N_{\geq 0} \mid x(S) \leq r_w(S)
\;\;\forall S\subseteq N\}.
\end{equation}
Even more importantly, we do not only have 
$\max\{ x(N) \mid x\in P_w\} =
\max\{w(I) \mid I\in \mathcal{I}\}$,
but we also have that the problem of interdicting
the maximum weight independent set problem of a matroid
maps to the problem of interdicting the corresponding
all-ones maximization problem on the polymatroid.
This is formalized through the lemma below.
\begin{lemma}\label{lem:toPolymat}
For any $R \subseteq N$, we have
\begin{equation*}
\max\{x(N) \mid x \in P_w, x(R)=0\} =
\max\{w(I) \mid I\in \mathcal{I}, I\subseteq N\setminus R\}.
\end{equation*}
\end{lemma}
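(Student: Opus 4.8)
The plan is to reduce the left-hand side to an all-ones maximization over the polymatroid associated with the \emph{restriction} of the matroid to $N\setminus R$, and then to invoke the recalled identity $\max\{x(N)\mid x\in P_w\}=\max\{w(I)\mid I\in\mathcal{I}\}$ for that restricted matroid. Concretely, let $M|_{N\setminus R}=(N\setminus R,\mathcal{I}')$ be the matroid obtained from $M$ by deleting $R$, so that $\mathcal{I}'=\{I\in\mathcal{I}\mid I\subseteq N\setminus R\}$, and let $r_w'$ be its weighted rank function. The right-hand side is exactly $\max\{w(I)\mid I\in\mathcal{I}'\}=r_w'(N\setminus R)$, so the recalled identity applied to $M|_{N\setminus R}$ will finish the argument once I show that the left-hand side equals the top value of the polymatroid $P_w' = \{x\in\mathbb{R}^{N\setminus R}_{\geq 0}\mid x(S)\leq r_w'(S)\ \forall S\subseteq N\setminus R\}$.

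First I would observe that the constraints $x(R)=0$ and $x\geq 0$ together force $x(e)=0$ for every $e\in R$; hence in the left-hand maximization $x$ is supported on $N\setminus R$ and $x(N)=x(N\setminus R)$. Second, I would argue that, once $x$ vanishes on $R$, the constraints $x(S)\leq r_w(S)$ for sets $S$ meeting $R$ are redundant: for any $S\subseteq N$ we have $x(S)=x(S\setminus R)$, and by monotonicity of the weighted rank function $r_w(S\setminus R)\leq r_w(S)$, so the constraint indexed by $S$ is implied by the one indexed by $S\setminus R\subseteq N\setminus R$. Thus the feasible region of the left-hand problem coincides with $P_w'$. Third, I would note that for $S\subseteq N\setminus R$ the definition of $r_w$ gives $r_w(S)=\max\{w(I)\mid I\subseteq S,\ I\in\mathcal{I}\}=\max\{w(I)\mid I\subseteq S,\ I\in\mathcal{I}'\}=r_w'(S)$, since every $I\subseteq S$ already lies in $N\setminus R$; hence $P_w'$ is precisely the polymatroid of the restricted matroid. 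Applying the recalled identity to $M|_{N\setminus R}$ then yields $\max\{x(N\setminus R)\mid x\in P_w'\}=r_w'(N\setminus R)=\max\{w(I)\mid I\in\mathcal{I},\ I\subseteq N\setminus R\}$, which is the desired equality.

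The main obstacle I anticipate is the second step, namely justifying that the constraints involving elements of $R$ can be dropped without enlarging the feasible set; this is where monotonicity of $r_w$ is essential and where one must be careful that the resulting description is exactly $P_w'$ rather than a relaxation. Everything else is bookkeeping: checking that the deletion minor $M|_{N\setminus R}$ is again a matroid (so that the recalled identity, which holds for an arbitrary matroid, applies), and that its weighted rank function agrees with the restriction of $r_w$. I do not expect any difficulty treating the two inequalities separately, since reducing to the restricted polymatroid turns the claim into a single application of an identity already available to us.
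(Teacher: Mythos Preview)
Your argument is correct. You reduce to the restriction matroid $M|_{N\setminus R}$, identify the feasible region of the left-hand side (after zeroing the $R$-coordinates) with the polymatroid $P_w'$ of that restriction, and then invoke the already-stated identity $\max\{x(N)\mid x\in P_w\}=\max\{w(I)\mid I\in\mathcal{I}\}$ for the restricted matroid. The redundancy step is handled cleanly via monotonicity of $r_w$.

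The paper takes a different, more direct route: it proves the two inequalities separately without passing through the restriction. For $\text{lhs}\leq\text{rhs}$ it observes that any feasible $x^*$ satisfies $x^*(N)=x^*(N\setminus R)\leq r_w(N\setminus R)$ straight from the polymatroid constraint for the set $N\setminus R$. For $\text{lhs}\geq\text{rhs}$ it takes a maximizer $I^*\subseteq N\setminus R$ and exhibits an explicit feasible point $y$ with $y(e)=w(e)$ for $e\in I^*$ and $y(e)=0$ otherwise, checking $y\in P_w$ via $y(S)=w(S\cap I^*)\leq r_w(S)$. Thus the paper's proof is self-contained and does not rely on the $R=\emptyset$ identity as a black box (indeed it re-proves that identity in the course of the argument), whereas your approach is more structural: it shows the general statement is \emph{equivalent} to the special case applied to a minor. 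Your route is arguably more conceptual and extends naturally if one already trusts the base identity; the paper's route is shorter on dependencies and yields an explicit primal witness for the $\geq$ direction.
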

\begin{proof}
Observe that the right-hand side of the above
equality is, by definition, equal to $r_w(N\setminus R)$.

\textbf{lhs $\leq$ rhs:}
Let $x^*$ be a maximizer of
$\max\{x(N) \mid x\in P_w, x(R)=0\}$. We have
\begin{align*}
x^*(N) &= x^*(N\setminus R) & \text{($x^*(R)=0$)}\\
        &\leq r_w(N\setminus R) &\text{(since $x^*\in P_w$)},
\end{align*}
thus showing the desired inequality.

\textbf{lhs $\geq$ rhs:}
Conversely, let $I^*$ be a maximizer of
$\max\{w(I) \mid I\in \mathcal{I}, I\subseteq N\setminus R\}$.
Hence, $w(I^*)=r_w(N\setminus R)$.
Define $y\in \mathbb{R}^N_{\geq 0}$ by
\begin{equation*}
y(e) = \begin{cases}
  w(e) & \text{if } e\in I^*\\
     0 & \text{if } e\in N\setminus I^*.
\end{cases}
\end{equation*}

Clearly, $y(N) = w(I^*)=r_w(N\setminus R)$.
Thus, to show that the left-hand side of the equality
of Lemma~\ref{lem:toPolymat} is at least as large as
the right-hand side, it suffices to show that $y$ is
feasible to the maximization problem on the left-hand
side, i.e., $y(R)=0$ and $y\in P_w$.
We have $y(R)=0$ since $I^*\subseteq N\setminus R$.
Furthermore,
\begin{align*}
y(S) = w(S\cap I^*) \leq r_w(S)
  \qquad \forall S\subseteq N,
\end{align*}
where the inequality follows from $S\cap I^*\in \mathcal{I}$.
Hence, this implies $y\in P_w$ and completes the proof.
\end{proof}

We therefore can focus on the problem
$\max\{x(N) \mid x\in P_w, \;x(R) =0\}$ to which we
can now apply Theorem~\ref{thm:2pseudoapprox}.
For this it remains to observe that $P_w$ is $1$-down-closed because
it is down-closed. Furthermore, the description of $P_w$
given by~\eqref{eq:descPw} is box-$1$-DI solvable since
it is well-known to be even box-TDI, a property that
holds for all
polymatroids~\cite[Section~44.3]{schrijver2003combinatorial}, and one can efficiently find
an optimal integral dual 
solution to the problem of finding a maximum size point
over~\eqref{eq:descPw} with upper box constraints.
In fact, this problem can be interpreted as a maximum
cardinality polymatroid intersection problem,
one polymatroid being $P_w$ and the other one being
defined by the upper box constraints.
An optimal integral dual solution to the maximum
cardinality polymatroid intersection problem
can be found in strongly
polynomial time by standard techniques
(for clarity we provide some more details about
this in Section~\ref{sec:submCosts}).
In summary, our technique presented
in Section~\ref{sec:2pseudoapprox}
to obtain $2$-pseudoapproximations therefore indeed applies
to this setting.

\subsection{Submodular costs}
\label{sec:submCosts}

In this section, we show how to obtain a
$2$-pseudoapproximation
for the interdiction of the maximum weight independent set
of a matroid with submodular interdiction costs.
When dealing with submodular interdiction costs, we assume
that the interdiction costs $\kappa$ are a nonnegative
and monotone submodular function
$\kappa:2^N \rightarrow \mathbb{R}_{\geq 0}$.
As before, a removal set $R\subseteq N$ has to satisfy the
budget constraint, i.e., $\kappa(R)\leq B$.
We assume that the submodular function $\kappa$ is given by
a value oracle.

To design a $2$-pseudoapproximation, we will describe a way
to formulate the problem such that it can be attacked with
essentially the same techniques as described in
Section~\ref{sec:2pseudoapprox}.
For simplicity of presentation, and to avoid replicating
reasonings introduced in Section~\ref{sec:2pseudoapprox},
we focus on the key differences in this section, and refer
to Section~\ref{sec:2pseudoapprox} for proofs that are
essentially identical.

We extend the model for the weighted case. A variable
$q(S)$ is introduced for each set $S\subseteq N$.
In the non-relaxed mathematical program, we have
$q\in \{0,1\}^{2^N}$, and only one variable $q(S)$
is equal to one, which indicates the set $S$ of elements
we interdict.
Below is a mathematical description of a relaxation,
where we allow the variables $q(S)$ to take real
values. If instead of allowing $q(S)\in \mathbb{R}_{\geq 0}$,
we set $q(S)\in\{0,1\}$, then the mathematical program below
would be an exact description of the interdiction problem
with submodular interdiction costs.

\begin{equation}
\begin{array}{>{\displaystyle}rr@{\;\;}c@{\;\;}ll}
\min_{q\in \mathbb{R}^{2^N}} \max_{x \in \mathbb{R}^n} &
   (1-\sum_{S\subseteq N} \chi^S \cdot q(S))^T x & & &\\
              &x(S)      &\leq &r_w(S) & \forall S\subseteq N \\
              &x        &\geq &0  & \\
              &   \sum_{S\subseteq N} \kappa(S)\cdot q(S) &\leq &B  & \\
              &   \sum_{S\subseteq N} q(S)     &\leq &1  &\\
              &   q(S)     &\geq &0  & \forall S\subseteq N\\
\end{array}
\label{eq:minMaxSubm}
\end{equation}

We start by dropping the constraint
$\sum_{S\subseteq N} q(S) \leq 1$. As we will see later,
this does not change the objective value.
This step
is similar to dropping the constraint $r\leq 1$ when going
from~\eqref{eq:minMax} to~\eqref{eq:minMaxSimple} in the
standard setting of our framework without submodular
interdiction costs.
We thus obtain the following mathematical program.

\begin{equation}
\begin{array}{>{\displaystyle}rr@{\;\;}c@{\;\;}ll}
\min_{q\in \mathbb{R}^{2^N}} \max_{x \in \mathbb{R}^n} &
   (1-\sum_{S\subseteq N} \chi^S \cdot q(S))^T x & & &\\
              &x(S)      &\leq &r_w(S) & \forall S\subseteq N \\
              &x        &\geq &0  & \\
              &   \sum_{S\subseteq N} \kappa(S)\cdot q(S) &\leq &B  & \\
              &   q(S)     &\geq &0  & \forall S\subseteq N\\
\end{array}
\label{eq:minMaxSimplSubm}
\end{equation}

Now, by dualizing the inner problem we get the following LP.

\begin{equation}
\begin{array}{>{\displaystyle}rr@{\;\;}c@{\;\;}ll}
\min & \sum_{S\subseteq N} r_w(S) y(S) & & &\\
              &\left(\sum_{S\subseteq N: e\in S} y(S)\right)
          + \left( \sum_{S\subseteq N: e\in S} q(S) \right)
                   &\geq & 1 & \forall e\in N \\
              &   y(S)     &\geq &0  & \forall S\subseteq N\\
              &   q(S)     &\geq &0  & \forall S\subseteq N\\
              &   \sum_{S\subseteq N} \kappa(S)\cdot q(S) &\leq &B  & \\
\end{array}
\label{eq:dualLPfullSubm}
\end{equation}

As in the case with linear interdiction costs, we dualize the
budget constraint with a Lagrangian multiplier $\lambda$
to obtain the following family of LPs, parameterized by
$\lambda$:

\begin{equation}
\begin{array}{>{\displaystyle}rr@{\;\;}c@{\;\;}ll}
L(\lambda) = \min & \sum_{S\subseteq N} r_w(S) y(S)
               +\lambda\left( \sum_{S\subseteq N} \kappa(S) \cdot q(S)\right)
               - \lambda B
              & & &\\
              &\left(\sum_{S\subseteq N: e\in S} y(S)\right)
          + \left( \sum_{S\subseteq N: e\in S} q(S) \right)
                   &\geq & 1 & \forall e\in N \\
              &   y(S)     &\geq &0  & \forall S\subseteq N\\
              &   q(S)     &\geq &0  & \forall S\subseteq N\\
\end{array}
\tag{LP($\lambda$)}
\label{eq:dualLPlagS}
\end{equation}

It remains to observe that for any
$\lambda \geq 0$,~\ref{eq:dualLPlagS}
is the dual of a maximum cardinality
polymatroid intersection problem---when
forgetting about the constant term
$-\lambda B$---where the two
polymatroids are defined by the submodular
functions $r_w$ and $\lambda\cdot \kappa$, respectively.
A key result in this context is that there is
a set $A\subseteq N$ such that the optimal primal
value, which is equal to the optimal dual value by
strong duality, is equal to $\lambda \kappa(A) + r_w(N\setminus A)$
(see~\cite[Section~46.2]{schrijver2003combinatorial}).
This implies that defining $q(A)=1$, $y(N\setminus A)=1$,
and setting all other entries of $q$ and $y$ to zero
is an optimal solution to~\eqref{eq:dualLPlagS}.
Furthermore, such a set $A$ can be found in strongly
polynomial time~\cite[Section~47.1]{schrijver2003combinatorial}.
Note that this fact also implies that dropping the
constraint $\sum_{S\subseteq N} q(S) \leq 1$ when going
from~\eqref{eq:minMaxSubm} to~\eqref{eq:minMaxSimplSubm}
did not change the objective value of the mathematical
program.
Furthermore, we can evaluate $L(\lambda)$ efficiently for
any $\lambda\geq 0$.

From this point on, the approach is identical to
the one presented in Section~\ref{sec:2pseudoapprox}
for linear interdiction costs.
More precisely, we 
determine the optimal dual multiplier $\lambda^*$
and two optimal dual solutions $(q^1,y^1)$, $(q^2, y^2)$
to $LP(\lambda^*)$ such that 
\begin{enumerate}[(i)]
\item The dual solutions have the above-mentioned property that
all four vectors $y^1, q^1, y^2, q^2$ only have $0$-entries with
the exception of a single $1$-entry. Let $R_1, R_2\subseteq N$
be the sets such that $q^1(R^1)=q^2(R^2)=1$.

\item One solution has interdiction cost that is upper bounded
by the budget and one has an interdiction cost that is lower
bounded by the budget, i.e., 
$\kappa (A^1) \leq B \leq \kappa (A^2)$.
\end{enumerate}

The value $\lambda^*$ and vectors $q^1, y^1, q^2, y^2$ can
either be found by bisection, as described in
Section~\ref{sec:2pseudoapprox}, or they can be obtained
in strongly polynomial time via Megiddo's parametric
search technique (see~\cite{juttner2006budgeted} for details).
An identical reasoning as used in Theorem~\ref{thm:oneRIsGood}
shows that one of $R^1$ or $R^2$ is a $2$-pseudoapproximation.

\section{Refinements for bipartite $b$-stable set interdiction}
\label{sec:bIndep}

This section specializes our approach to the interdiction
of $b$-stable sets in a bipartite graph.
We recall that given is a bipartite graph
$G=(V,E)$ with bipartition $V=I\cup J$ and edge
capacities~$b\in\mathbb{Z}_{\geq 0}^E$.
A $b$-stable set is a vector $x\in \mathbb{Z}_{\geq 0}^N$
such that $x(i)+x(j)\leq b(\{i,j\})$ for each $\{i,j\}\in E$.
The \emph{value} of a $b$-stable set $x$ is given
by $x(V)$. The maximum $b$-stable set problem asks
to find a $b$-stable set of maximum value.
Furthermore, we are given an interdiction cost
$c:V\rightarrow \mathbb{Z}_{>0}$ for each vertex, and an
interdiction budget $B\in \mathbb{Z}_{>0}$.
As usual, the task is to remove a subset $R\subseteq V$
with $c(R)\leq B$ such that value of a maximum $b$-stable
set in the graph obtained from $G$ by removing $R$ is as
small as possible.

In Section~\ref{sec:PTASBIndepSet} we show how our approach
can be adapted to get a PTAS for $b$-stable set interdiction,
thus proving Theorem~\ref{thm:PTASBIndepSet}.
In Section~\ref{sec:maxCardBIndepSet} we complete the
discussion on $b$-stable set interdiction by presenting
an exact algorithm to solve the interdiction problem of
the classical stable set problem in bipartite graphs,
which corresponds to the case when $b$ is the all-ones vector.

Before presenting these results,
we remark that $b$-stable set problem has
also a well-known vertex-capacitated variant. 
In this case an additional vector
$u\in\mathbb{Z}_{\geq 0}^V$ is given and
constraints $x\leq u$ are imposed.
The vertex-capacitated problem can easily be reduced
to the uncapacitated problem by adding two
additional vertices $v_I, v_J$, where $v_I$ is added to $I$
and $v_J$ to $J$, and connecting $v_I$ to all vertices in $J$
and $v_J$ to all vertices in $I$.
Finally, by choosing $b(\{v_I,j\})=u(j)$ for $j\in J$ and
$b(\{v_J, i\})=u(i)$ for $i\in I$, one obtains a $b$-stable
set problem that is equivalent to the vertex-capacitated version.
Furthermore, a vertex interdiction strategy for minimizing
the maximum $b$-independent set problem in this auxiliary graph
carries over exactly to the vertex-capacitated variant.
Thus, the approach we present can also deal with vertex
capacities.

\subsection{PTAS by exploiting adjacency structure}
\label{sec:PTASBIndepSet}

As in our general approach, we start with the
relaxation~\eqref{eq:minMax}.
Below, we adapt the description of the relaxation to
this specialized setting highlight some structural aspects of the problem.
\begin{equation}
\begin{array}{>{\displaystyle}rr@{\;\;}c@{\;\;}l}
\min_{r\in \mathbb{R}^V} \max_{x \in \mathbb{R}^V} &(1-r)^T x & & \\
              &A x      &\leq &b \\
              &x        &\geq &0 \\
              &   c^T r &\leq &B \\
              &0 \;\; \leq \;\; r        &\geq &1 \\
\end{array}
\label{eq:minMaxBIndepSet}
\end{equation}
Notice that the matrix $A\in \{0,1\}^{E\times V}$ is
the incidence matrix of the bipartite graph $G$, i.e.,
$A(e,v)=1$ if and only if $v\in V$ is one of the endpoints
of $e\in E$. This matrix is well known to be
totally unimodular (TU)~\cite{korte_2012_combinatorial}.
Similar to our general approach, we could now drop
the constraint $r\leq 1$. However, since this does not
lead to a further simplification in this setting, we will
keep this constraint.
Following our general approach, we dualize the inner
maximization problem to obtain the following linear program.
\begin{equation}
\begin{array}{>{\displaystyle}rr@{\;}r@{\;\;}c@{\;\;}l}
\min          &b^T y  &     &     &  \\
              &A^T y  &+\;r &\geq &1 \\
              &y   &        &\geq &0 \\
              &    & c^T r  &\leq &B \\
              &  0\;\;  &  \leq\;\; r  &\leq &1 \\
\end{array}
\label{eq:dualLPfullBIndepSet}
\end{equation}

Observe that $\{0,1\}$-solutions
to~\eqref{eq:dualLPfullBIndepSet} have a nice combinatorial
interpretation. More precisely, they correspond to a
subset $R\subseteq V$ of the vertices (where $\chi^R = r$)
with $c(R)\leq B$ and an edge
set $F\subseteq E$ (where $\chi^F = y$) such that
$F$ is an edge cover in the graph obtained from $G$
by removing the vertices $R$.

Not surprisingly, apart from
the budget constraint $c^T r \leq B$,
the feasible region of the above LP closely resembles
the bipartite edge cover polytope. We will make this
link more explicit in the following with the goal to
exploit well-known adjacency properties of the bipartite
edge cover polytope.
First, notice that for any feasible solution $(y,r)$
to~\eqref{eq:dualLPfull}, the vector $(y\wedge 1, r)$
is also feasible with equal or lower objective value.
This follows from the fact that $A$ is a $\{0,1\}$-matrix.
Hence, we can add the constraint $y\leq 1$ without changing
the problem to obtain the following LP.
\begin{equation}
\begin{array}{>{\displaystyle}rr@{\;}r@{\;\;}c@{\;\;}l}
\min          &b^T y  &     &     &  \\
              &A^T y  &+\;r &\geq &1 \\
              &    & c^T r  &\leq &B \\
              &  0\;\;  &  \leq\;\; y  &\leq &1 \\
              &  0\;\;  &  \leq\;\; r  &\leq &1 \\
\end{array}
\label{eq:dual2LPfullBIndepSet}
\end{equation}
The feasible region of the above
LP is given by intersection the polyope
\begin{equation*}
P=\left\{
\begin{pmatrix}
y\\
r
\end{pmatrix}
\in \mathbb{R}^{|E|+|V|}
\;\middle\vert\;
A^T y + r \geq 1, 0\leq y \leq 1, 0\leq r\leq 1
\right\}
\end{equation*}
with the half-space
$\{(y,r)\in \mathbb{R}^{|E|+|V|} \mid c^T r\leq B\}$.
Notice that $P$ is integral because the matrix
$A$ is TU.
The key property we exploit is that $P$
has very well-structured adjacency properties,
because it can be interpreted as a face of a bipartite 
edge cover polytope, a polytope whose adjacency
structure is well known.
More precisely, it turns out that any two adjacent vertices
of $P$ represent solutions that do
not differ much in terms of cost and objective function.
Hence, similar to our general approach, we compute
two vertex solutions of $P$, one over budget but with
a good objective value and the other one under budget,
with the additional property that they are adjacent
on $P$. We then return the one solution that is
budget-feasible. 
This procedure as stated does not yet lead
to a PTAS, but it can be transformed into one
by a classical preprocessing technique that we
will briefly mention at the end.

We start by introducing a bipartite edge cover polytope $P'$
such that $P$ is a face of $P'$. To simplify the exposition,
we do a slight change to the above sketch of the algorithm.
More precisely, we will restate~\eqref{eq:dual2LPfullBIndepSet}
in terms of a problem on $P'$ and then work on the polytope
$P'$ instead of $P$.
We will define $P'$ with a system of linear constraints.
It has two new rows and one new variable $r_{IJ}$ in addition to the constraints~$A^Ty+r\geq 1$ of $P$.
The rows correspond to two new vertices in the graph, one in $I$ and one in $J$, and the new variable
is for an edge between the two new vertices.
The updated constraints are
\begin{equation}\label{eq:auxConstrBIS}
\underbrace{\left(\begin{array}{ccc} 
A^T & I   & 0\\
0 & (\chi^J)^T & 1\\
0 & (\chi^I)^T & 1
\end{array}\right)}_{D:=}
\left(\begin{array}{c}
y\\
r\\
r_{\textsc{IJ}}
\end{array}\right)\geq 1,
\end{equation}
where $\chi^I, \chi^J \in \{0,1\}^V$ are the characteristic
vectors of $I\subseteq V$ and $J\subseteq V$, respectively.
Let $D$ be the $\{0,1\}$-matrix on the left-hand side
of the constraint~\eqref{eq:auxConstrBIS}.
Notice that $D$ is the vertex-edge incidence matrix
of a bipartite graph $G'=(V',E')$, where $G'$ is
obtained from $G$ as follows:
add one new vertex $w_I$ to $I$ and one new
vertex $w_J$ to $J$; then connect $w_I$ to
all vertices in $J\cup\{w_J\}$ and $w_J$ to
all vertices in $I$.
Hence, $I'=I\cup\{w_I\}$ and $J'=J\cup\{w_J\}$
is a bipartition of $V'$.
Since $D$ is an incidence matrix of a bipartite
graph, it is TU.
For easier reference to the different types of
edges in $G'$ we partition $E'$ into the
edge $E$, the edge set
\begin{equation*}
E_R = \{\{w_I, j\} \mid j\in J\} \cup
      \{\{i,w_J\} \mid i\in I\},
\end{equation*}
and the single edge $f=\{w_I, w_J\}$, i.e.,
$E' = E \cup E_R \cup \{f\}$.

Now consider the edge cover polytope that
corresponds to $D$:
\begin{equation*}
P' = \left\{
\begin{pmatrix}
y\\
r\\
r_{IJ}
\end{pmatrix}
\in \mathbb{R}^{|E|+|V|+1}) 
\;\middle\vert\; D \cdot
\begin{pmatrix}
y\\
r\\
r_{IJ}
\end{pmatrix}
\geq 1, 0\leq y,r,r_{IJ} \leq 1\right\}.
\end{equation*}
Notice that $P$ is obtained from $P'$ by
considering the face of $P'$ defined by
$r_{IJ}=1$, and projecting out the variable
$r_{IJ}$.
Every vertex $y,r,r_{IJ}$ of $P'$
is a characteristic vector
of an edge cover in $G'$, where $y$ represents
the characteristic vector of the edges in $E$,
the vector $r$ is the characteristic vector
of the edges in $E_R$,
and $r_{IJ}=1$ indicates that $f$ is part
of the edge cover.

We can now restate~\eqref{eq:dual2LPfullBIndepSet}
as follows in terms of $P'$:
\begin{equation}\label{eq:budgetedEdgeCover}
\min\left\{b^T y \;\middle\vert\;
\begin{pmatrix}
y\\
r\\
r_{IJ}
\end{pmatrix}
\in P'
,c^T r \leq B\right\}.
\end{equation}
Indeed, one can always choose for free $r_{IJ}=1$
in the above LP, since $r_{IJ}$ does not appear in
the objective.
Furthermore, when setting $r_{IJ}=1$,
the LP~\eqref{eq:budgetedEdgeCover} has the
same feasible vectors $(y,r)$
as~\eqref{eq:dual2LPfullBIndepSet}.
We can thus focus on~\eqref{eq:budgetedEdgeCover}
instead of~\eqref{eq:dual2LPfullBIndepSet}.

One can interpret an edge cover $F$ in $G'$ as an
interdiction strategy of the original problem as follows.
Every vertex $v\in V$ that is incident with
either $w_I$ or $w_J$ through an edge of $F$ will be
interdicted.
To obtain a better combinatorial interpretation
of~\ref{eq:budgetedEdgeCover}, we extend the
vectors $b$ and $c$ to all edges $E'$.
More precisely, $b$ is only defined for edges
in $E$. We set $b(e)=0$ for $e\in E'\setminus E$.
Furthermore, the vector $c$ can be interpreted
as a vector on the edges $E_R$, where
$c(\{w_I, j\}):=c(j)$ and $c(\{i,w_J\}):=c(i)$
for $i\in I$ and $j\in J$. For $e\in E'\cup\{f\}$
we set $c(e)=0$.
Using this notation, the best $\{0,1\}$-solution
to~\eqref{eq:budgetedEdgeCover} can be interpreted
as an edge cover $F$ of $G'$ that minimizes $b(F)$
under the constraint $c(F)\leq B$.
One can observe that the best $\{0,1\}$-solution
to~\eqref{eq:budgetedEdgeCover} corresponds to an
optimal interdiction set for the original non-relaxed
interdiction problem.

Also, we want to highlight that problems of this type,
where a combinatorial optimization problem has to
be solved under an additional linear packing
constraint with nonnegative coefficients are also
known as \emph{budgeted optimization problems} or
\emph{restricted optimization problems}
and have been studied for various problem settings,
like spanning trees, matchings, and shortest paths
(see~\cite{grandoni_2014_new} and references
therein for more details).
The way we adapt our procedure to exploit adjacency
properties of the edge cover polytope is inspired by
procedures to find budgeted matchings
and spanning trees~\cite{ravi_1996_constrained,berger2008budgeted,grandoni_2014_new}.

We compute an optimal vertex solution
$p^* = (y^*,r^*,r^*_{IJ}=1)$
to~\eqref{eq:budgetedEdgeCover}
via standard linear programming techniques.
If $r^*$ is integral, i.e., $r^*\in \{0,1\}^V$, then
$r^*$ corresponds to an optimal interdiction set since
it is optimal for the relaxation and integral.
Hence, assume $r^*$ not to be integral from now on.
This implies that $p^*$ is in the
interior of an edge of $P'$, since it is a vertex
of the polytope obtained by intersecting $P'$ 
with a single additional constraint.
This edge of the polytope $P'$ is described by looking
at the constraints of $P'$ that are tight with respect
to the optimal vertex solution. From this description
of the edge, we can efficiently compute its two endpoints
$y^1, r^1, r_{IJ}^1=1$ and $y^2,r^2, r_{IJ}^2=1$, which 
are vertices of $P'$ and therefore integral.
These two solutions correspond to edge covers
$F^1, F^2\subseteq E'$ in $G'$
with $f\in F^1\cap F^2$.
For simplicity, we continue to work with these
edge covers $F^1$ and $F^2$.
One of these edge covers will violate the budget
constraint and be superoptimal,
say the first one, i.e., $c(F^1) > B$ and $b(F^1) < b^T y^*$,
and the other one strictly satisfies the budget constraint
 and is suboptimal,
i.e., $c(F^2) < B$ and $b(F^2) > b^T y^*$.
Hence, this is just a particular way to obtain
two solutions as required by our general approach,
with the additional property that they are
adjacent on the polytope $P'$.

The key observation is that $F^2$ is not just
budget-feasible, but almost optimal. We prove
this by exploiting the following adjacency property
of edge cover polytopes shown by Hurkens.

\begin{lemma}[Hurkens~\cite{hurkens1991diameter}]
\label{lem:hurkens}
Two edge covers $U_1$ and $U_2$ of a bipartite graph
are adjacent if and only if $U_1\Delta U_2$ is an
alternating cycle or an alternating path with endpoints
in $V(U_1\cap U_2)$, where $V(U_1\cap U_2)$ denotes all
endpoints of the edges in $U_1\cap U_2$.
\end{lemma}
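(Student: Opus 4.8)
The plan is to reduce the adjacency question to a purely combinatorial statement about the symmetric difference $D := U_1 \Delta U_2$. I would use the standard fact that two vertices $p_1,p_2$ of a polytope are adjacent if and only if the segment $[p_1,p_2]$ is the minimal face containing the midpoint $\tfrac12(p_1+p_2)$. Because the edge cover polytope is a $\{0,1\}$-polytope, any vertex $\chi^W$ on that minimal face must agree with both $\chi^{U_1}$ and $\chi^{U_2}$ on every coordinate where these already agree; that is, $U_1\cap U_2 \subseteq W \subseteq U_1\cup U_2$, so $W=(U_1\cap U_2)\cup S$ for some $S\subseteq D$. Equivalently, decompositions $\chi^{U_1}+\chi^{U_2}=\chi^{W_1}+\chi^{W_2}$ into two edge covers correspond exactly to partitions $D=A\sqcup B$ for which both $(U_1\cap U_2)\cup A$ and $(U_1\cap U_2)\cup B$ are edge covers. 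Thus $U_1,U_2$ are non-adjacent precisely when $D$ admits such a partition other than the trivial one $\{U_1\cap D,\,U_2\cap D\}$, and the whole lemma becomes a question about uniqueness of this partition.

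Next I would record when $(U_1\cap U_2)\cup S$ is an edge cover. Every vertex already covered by $U_1\cap U_2$ is fine for any $S$, so the only constraints come from the \emph{exposed} vertices, those covered by neither a common edge. Since $U_1$ and $U_2$ are both edge covers, each exposed vertex has at least one incident edge of $U_1\setminus U_2$ and at least one of $U_2\setminus U_1$ in $D$, hence degree at least two in $D$ with both ``colours'' present. Consequently a partition $A\sqcup B=D$ yields two edge covers if and only if every exposed vertex receives at least one edge in $A$ and at least one in $B$. In particular every degree-$1$ vertex of $D$ is forced to be covered by $U_1\cap U_2$, which already pins the endpoints of any path component of $D$ into $V(U_1\cap U_2)$, matching the hypothesis of the lemma.

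With this dictionary I would run a component analysis of $D$, two-coloured by $U_1\setminus U_2$ versus $U_2\setminus U_1$. For the ``only if'' direction I argue contrapositively: if $D$ is disconnected, swapping the two colours on a single component produces a nontrivial valid partition, so $U_1,U_2$ are non-adjacent; and if $D$ is connected but has a vertex of degree at least three, or contains two consecutive edges of the same colour, then a local rerouting yields a second valid partition. What survives is exactly a single path or cycle whose edges alternate colours. For the ``if'' direction I show that along a single alternating path or cycle the exposed internal vertices each have exactly two incident $D$-edges, of opposite colours, so the ``one edge on each side'' requirement propagates the choice uniquely around the component, leaving only the trivial partition; the minimal face is then the segment $[\chi^{U_1},\chi^{U_2}]$ and the two covers are adjacent.

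The hardest part will be the bookkeeping around vertices that lie on the path or cycle yet are \emph{also} covered by $U_1\cap U_2$. For such a vertex the corresponding covering constraint need not be tight at the midpoint, so it imposes no ``one-on-each-side'' condition and can break the unique propagation; indeed one checks that an alternating cycle with two covered vertices already admits a nontrivial partition and is therefore non-adjacent, whereas with a single covered vertex uniqueness survives. Deciding exactly which placements of covered vertices preserve uniqueness is the delicate heart of the argument and is precisely what the stated structural condition is designed to encode. I would handle this by tracking, component by component, the covering constraints that are tight at the midpoint and showing that a single alternating path or cycle with the prescribed endpoint condition is exactly the configuration in which these tight constraints admit a unique feasible colouring, every other configuration exposing a freely movable edge that produces the nontrivial decomposition witnessing non-adjacency. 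For a fully rigorous treatment I would, as the paper does, ultimately defer to Hurkens' original analysis of the edge-cover polytope.
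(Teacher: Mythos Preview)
The paper does not prove this lemma at all: it is quoted as a known result of Hurkens and used as a black box. Consequently there is no ``paper's proof'' to compare your attempt against; your final sentence already acknowledges this, and in that sense your proposal and the paper agree---both defer to~\cite{hurkens1991diameter}.

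That said, your sketch is a sensible outline of how one would argue such a statement from scratch: the reduction of adjacency on a $\{0,1\}$-polytope to the (non)uniqueness of a decomposition $\chi^{U_1}+\chi^{U_2}=\chi^{W_1}+\chi^{W_2}$, the observation that any such $W_i$ must sit between $U_1\cap U_2$ and $U_1\cup U_2$, and the translation into a colouring/partition problem on the components of $U_1\Delta U_2$ are all the right moves. You are also right to flag the delicate point about vertices on the cycle or path that are already covered by $U_1\cap U_2$: at such vertices the covering inequality is slack at the midpoint, so it does not force a ``one edge on each side'' constraint, and this is exactly where a naive propagation argument can break down. Pinning down precisely which configurations of covered vertices still force uniqueness is indeed the crux, and it is what Hurkens' original analysis handles; your plan to treat it by tracking which covering constraints are tight is the correct framing, even if you stop short of carrying it out.
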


\begin{lemma}
$b(F^2) \leq b^T y^* + 2 b_{\max}$, where
$b_{\max} = \max_{e\in E} b(e)$.
\end{lemma}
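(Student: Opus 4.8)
The plan is to reduce the inequality to a bound on a single convex-combination coefficient and then control that coefficient using the adjacency structure from Hurkens' lemma. Since $p^* = (y^*, r^*, r^*_{IJ}=1)$ lies in the relative interior of the edge of $P'$ with endpoints $\chi^{F^1}, \chi^{F^2}$, and the extra constraint $c^T r\le B$ is what makes $p^*$ a vertex of the truncated polytope, the budget must be tight at $p^*$. Hence I would write $p^* = \theta\chi^{F^1} + (1-\theta)\chi^{F^2}$ with $\theta = \frac{B-c(F^2)}{c(F^1)-c(F^2)}\in(0,1)$, where I use $c(F^1)>B>c(F^2)$. Because $b^T y$ is linear and $b$ is supported only on $E$, applying it to the $y$-part of this convex combination gives the exact identity
\begin{equation*}
b(F^2) - b^T y^* = \theta\bigl(b(F^2)-b(F^1)\bigr).
\end{equation*}
Thus it suffices to show $\theta\bigl(b(F^2)-b(F^1)\bigr)\le 2b_{\max}$.

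The next step is to invoke Lemma~\ref{lem:hurkens}: since $F^1$ and $F^2$ are adjacent edge covers of $G'$, their symmetric difference $D = F^1\Delta F^2$ is a \emph{single} alternating cycle or alternating path with endpoints in $V(F^1\cap F^2)$. I would then exploit the special geometry of $G'$: the only edges carrying interdiction cost are those of $E_R$, and every edge of $E_R$ is incident to $w_I$ or to $w_J$. As $D$ is a simple cycle or path, each of the two vertices $w_I, w_J$ is met at most once, so $D$ contains at most two edges at $w_I$ and at most two at $w_J$; moreover $f\in F^1\cap F^2$ so $f\notin D$. Consequently $D$ meets $E_R$ in only a constant number of edges, and the entire cost difference $c(F^1)-c(F^2)$ is governed by these few edges. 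Deleting $w_I,w_J$ from $D$ therefore breaks it into at most two alternating sub-paths living inside $G$ and using only zero-cost edges of $E$.

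The heart of the argument is a patch-and-repair exchange along $D$. Starting from the superoptimal, over-budget cover $F^1$, I would revert the part of the swap adjacent to the cost-bearing edges at $w_I$ (or $w_J$) so as to bring the cost down to at most $B$; because the cost edges are confined to these two vertices, a bounded reversion suffices to cross the budget threshold. Reverting along an arc of a single alternating cycle or path can expose at most two vertices (the two cut points), and each can be re-covered by a single edge of $E$, which has zero interdiction cost and hence does not disturb the budget. This yields an integral, budget-feasible edge cover $\tilde F$ that differs from the relevant endpoint in at most two $E$-edges, so that the objective changes by at most $2b_{\max}$; combining the resulting bound with the optimality of $b^T y^*$ then forces $\theta\bigl(b(F^2)-b(F^1)\bigr)\le 2b_{\max}$.

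The main obstacle I anticipate is making the threshold cut precise: I must show that the single alternating cycle or path can always be severed near $w_I$ or $w_J$ in such a way that the cost lands at or below $B$ \emph{while only two $E$-edges are sacrificed in the repair}. This is exactly where the confinement of the cost edges to the two auxiliary vertices, together with the endpoint condition of Hurkens' lemma guaranteeing that the exposed cut vertices are covered by $F^1\cap F^2$, must be used carefully; the bookkeeping of which at-most-two $E$-edges are lost, and the verification that the repaired object is genuinely an edge cover of $G'$ of cost at most $B$, is the delicate part, whereas the convex-combination identity and the invocation of adjacency are routine.
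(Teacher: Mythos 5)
Your convex-combination identity and your construction of $\tilde F$ essentially track the paper's proof (the paper removes the at most two edges of $E_R\cap F^1$ lying in $F^1\Delta F^2$, notes that $w_I,w_J$ remain covered by $f\in F^1\cap F^2$, and repairs the at most two exposed vertices of $V$), but your concluding inference has a genuine gap. From the two facts you cite --- $c(\tilde F)\le B$ and $b(\tilde F)\le b(F^1)+2b_{\max}$ --- the optimality of $p^*$ for \eqref{eq:budgetedEdgeCover} yields only $b^T y^*\le b(\tilde F)\le b(F^1)+2b_{\max}$, which is $(1-\theta)\bigl(b(F^2)-b(F^1)\bigr)\le 2b_{\max}$: the \emph{wrong} coefficient. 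Your reduction requires $\theta\bigl(b(F^2)-b(F^1)\bigr)\le 2b_{\max}$, and since $\theta$ can be arbitrarily close to $1$, your two facts are consistent with the lemma failing. Concretely, with $b(F^1)=0$, $b(F^2)=10$, $b_{\max}=1$, $\theta=0.9$ one has $b^Ty^*=1$, and $1\le b(\tilde F)\le 2$ is perfectly consistent, while the lemma would assert $10\le 3$. So ``budget-feasibility of $\tilde F$ plus optimality of $b^Ty^*$'' cannot force your target inequality; no amount of care in the patch-and-repair bookkeeping fixes this, because the final step is where the argument breaks.

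The repair is to use the \emph{stronger} cost guarantee your construction actually delivers, namely $c(\tilde F)\le c(F^2)$ rather than merely $c(\tilde F)\le B$: after deleting the $E_R$-edges of $F^1$ in the symmetric difference, $\tilde F\cap E_R\subseteq F^1\cap F^2\cap E_R$, and the repair edges either have zero cost or, better, are taken from $F^2$ (as the paper does), so in all cases $\tilde F\cap E_R\subseteq F^2\cap E_R$ and $c(\tilde F)\le c(F^2)$. Then run the exchange argument at the level of $F^2$ rather than $y^*$: if $b(\tilde F)<b(F^2)$, the point $\theta\chi^{F^1}+(1-\theta)\chi^{\tilde F}$ is feasible for \eqref{eq:budgetedEdgeCover}, since its cost is at most $\theta c(F^1)+(1-\theta)c(F^2)=c^Tr^*\le B$, and its objective is strictly below $b^Ty^*$ (using $1-\theta>0$), contradicting optimality of $p^*$. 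Hence $b(F^2)\le b(\tilde F)\le b(F^1)+2b_{\max}\le b^Ty^*+2b_{\max}$, which is the paper's bound and is uniform in $\theta$. One further small hole in your sketch: you claim each exposed vertex can be re-covered by an edge of $E$, but a vertex of $V$ with no incident $E$-edge is covered in $G'$ only via $E_R$; taking the covering edge from $F^2$ instead always works (it exists because $F^2$ is an edge cover, and it preserves $c(\tilde F)\le c(F^2)$ since the added edges lie in $F^2$).
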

\begin{proof}
We will prove the statement by constructing
a new edge cover $Z\subseteq E'$ of $G'$
with the following two properties:
\begin{enumerate}[(i)]
\item\label{item:zCostOk} $c(Z) \leq c(F^2)$, and
\item\label{item:zBOk} $b(Z) \leq b(F^1)+2b_{\max}$.
\end{enumerate}
We claim that this implies the result due
to the following.
First observe that there can be no edge cover
$W$ of $G'$ such that $c(W)\leq c(F_2)$
and $b(W) < b(F_2)$.
If such an edge cover existed, then 
$p^*$ would not be an
optimal solution to~\eqref{eq:budgetedEdgeCover},
because $p^*$ is a convex combination
of $\chi^{F^1}$ and $\chi^{F^2}$, and
by replacing $F^2$ by $W$ one would obtain
a new budget-feasible solution with
lower objective value.
Hence, if \eqref{item:zCostOk} then $b(Z)\geq b(F^2)$,
which in turn implies
\[b(F^2) \leq b(Z)
\overset{\eqref{item:zBOk}}{\leq} b(F^1)
+ 2 b_{\max} \leq b^T y^* + 2 b_{\max}.
\]
Hence, it remains to prove the existence
of an edge cover $Z\subseteq E'$
satisfying~\eqref{item:zCostOk}
and~\eqref{item:zBOk}.%

By Lemma~\ref{lem:hurkens},
$U= F^1\Delta F^2$ is either an
alternating path or cycle.
In both cases, $U$ contains at most $4$ edges
of $E_R$, at most $2$ in $E_R\cap F^1$
and at most $2$ in $E_R\cap F^2$.
Let $E_R^1 = E_R\cap U \cap F^1$
be the up to two edges of $U$ in $E_R\cap F^1$.
Consider $X=F^1\setminus E_R^1$. 
$X$ is not necessarily an edge cover because
we removed up to two edges of $E_R$.
Hence, there may be up to $4$ vertices not
covered by $X$. However, the up to two edges of
$E_R$ that we removed to obtain $X$ from $F^1$
are both incident with one of the two vertices
$w_I$ and $w_J$.
Since $f\in X$ because $f\in F^1$,
the two vertices $w_I$, $w_J$
remain covered by $X$. Hence, there are at most two
vertices $i,j\in V$ that are not covered by
$X$. These two vertices are covered by
the edge cover $F^2$. Thus, there are up to
two edges $g,h\in F^2\setminus F^1$ that
touch $i$ and $j$. Now consider the
edge cover $Z = X \cup \{g,h\}$.
Observe that $Z\cap E_R = (F_1\cap F_2 \cap E_R)$.
Hence, $c(Z)\leq c(F_2)$ and
condition~\eqref{item:zCostOk} holds.
Furthermore, $X\subseteq F^1$, and thus
$b(Z) \leq b(F^1) + b(g) + b(h) \leq b(F^1) + 2b_{\max}$,
implying~\eqref{item:zBOk} and finishing the proof.
\end{proof}

Hence, $F^2$ corresponds to an interdiction strategy
that is optimal up to $2b_{\max}$.
From here, it is not hard to obtain a PTAS.
Let $\epsilon >0$. If $2b_{\max} \leq \epsilon b^T y^*$,
then $F^2$ corresponds to an interdiction strategy
that is an $(1-\epsilon)$-approximation.
Otherwise, we use the following well-known guessing
technique (see~\cite{ravi_1996_constrained,grandoni_2014_new}).
Consider an optimal integral solution
$\bar{y}, \bar{r}, \bar{r}_{IJ}$ of~\eqref{eq:budgetedEdgeCover}.
The vector $\bar{r}$ of such a solution is the characteristic
vector of an optimal interdiction set, and $\OPT = b^T \bar{y}$
is the optimal value of our interdiction problem.
We guess the $\lceil \frac{2}{\epsilon} \rceil$
heaviest edges $W$
of $\{e\in E \mid \bar{y}(e) = 1\}$,
i.e., the ones with highest $b$-values. This can be done
by going through all subsets of $E$ of size
$\lceil\frac{2}{\epsilon}\rceil$,
which is a polynomial number of subsets
for a fixed $\epsilon > 0$.
For each such guess we consider the resulting residual
version of problem~\eqref{eq:budgetedEdgeCover}, where
we set $y(e)=1$ for each guessed edge and remove all
edges of strictly higher $b$-values than the lowest
$b$-value of the guessed edges. 
Hence, we end up with a residual problem
where $b_{\max}$ is less than or equal to
the $b$-value of any guessed edge. For the right
guess $W$, we have $b(W)\leq \OPT$ and thus get 
\begin{equation*}
b_{\max} \leq \frac{\epsilon}{2}b(W)
  \leq \frac{\epsilon}{2} \OPT,
\end{equation*}
implying that the set $F^2$ for the right guess
is indeed a $(1-\epsilon)$-approximation.

Notice that if $b_{\max}$ is
sufficiently small with respect
to $b^T y^*$, i.e., $2b_{\max} \leq \epsilon b^T y^*$,
then the expensive guessing step can be skipped.

\subsection{Efficient algorithm for stable set interdiction
in bipartite graphs}
\label{sec:maxCardBIndepSet}

We complete the discussion on bipartite $b$-stable set
interdiction by showing that the problem of interdicting
stable sets, which are the same as $1$-stable sets,
in a bipartite graph can be solved in polynomial time.

We reuse the notation of the previous section. Hence,
$G=(V,E)$ is a bipartite graph with bipartition $V=I\cup J$,
$c:E\rightarrow \mathbb{Z}_{>0}$ are the interdiction
costs, and $B\in \mathbb{Z}_{>0}$ is the interdiction budget.
Furthermore, we denote by $\alpha(G)$ the size of a maximum
cardinality stable set in $G$ and by $\nu(G)$ the size
of a maximum cardinality matching.
It is well-known from K\"onig's Theorem that
for any bipartite graph $G=(V,E)$,
\begin{equation*}
\alpha(G) = |V| - \nu(G).
\end{equation*}
Hence, the objective value of some interdiction set
$R\subseteq V$ with $c(R) \leq B$ is equal to
\begin{equation*}
\alpha(G[V\setminus R]) = |V| - |R| - \nu(G[V\setminus R]),
\end{equation*}
where $G[W]$ for any $W\subseteq V$ is the induced subgraph
of $G$ over the vertices $W$, i.e., the graph obtained from
$G$ by removing $V\setminus W$.

We start by discussing some structural properties that can
be assumed to hold for at least one optimal solution.
Let $R^*$ be an optimal solution to the interdiction
problem, and let $M^*\subseteq E$ be a maximum
cardinality matching in $G[V\setminus R]$.
By the above discussion, the value of the interdiction
set $R^*$ is
\begin{equation}\label{eq:indepToMatching}
\alpha(G[V\setminus R^*]) = |V| - |R^*| - |M^*|.
\end{equation}

In the following, we will focus on finding an
optimal matching $M^*$, and then derive $R^*$
from this matching. We start with a lemma that
shows how $R^*$ can be obtained from $M^*$.
For this we need some additional notation.
We number the vertices 
$V=\{v_1,\dots, v_n\}$ such that
$c(v_1) \leq c(v_2) \leq \dots \leq c(v_n)$.
For $\ell\in \{0,\dots, n\}$ let
$V_{\ell}=\{v_1,\dots, v_{\ell}\}$ with
$V_0 = \emptyset$.
Furthermore, for any subset of edges $U$,
we denote by $V(U)$ the set~$\cup_{e\in U}e$ of all endpoints
of edges in $U$.

\begin{lemma}\label{lem:RFromM}
  Let $R$ be an optimal interdiction set and
 let $M^*$ be a maximum cardinality matching
in the graph $G[V\setminus R]$.
Then the set $R^*\subseteq V$ defined below
is also an optimal solution to the interdiction
problem.
\begin{equation*}
R^* = V_\ell \setminus V(M^*),
\end{equation*}
where $\ell\in \{0,\dots, n\}$ is the largest
value such that $c(V_\ell \setminus V(M^*))\leq B$.
\end{lemma}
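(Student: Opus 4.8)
The plan is to use König's identity to convert the objective into a matching-cum-cardinality comparison, and then to check that $R^*$ does at least as well as the given optimal set $R$. For any interdiction set $R'$ with $c(R')\le B$ the objective equals $\alpha(G[V\setminus R'])=|V|-|R'|-\nu(G[V\setminus R'])$. Since $R$ is optimal and, as I will verify, $R^*$ is budget-feasible, it suffices to prove $\alpha(G[V\setminus R^*])\le \alpha(G[V\setminus R])$; optimality of $R$ then forces equality and hence optimality of $R^*$.

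First I would record two consequences of the definition $R^*=V_\ell\setminus V(M^*)$. Budget-feasibility is immediate: by the choice of $\ell$ as the largest index with $c(V_\ell\setminus V(M^*))\le B$, we have $c(R^*)\le B$, and such an $\ell$ exists since $\ell=0$ is always admissible. Next, because $R^*$ explicitly excludes every vertex of $V(M^*)$, we have $V(M^*)\cap R^*=\emptyset$, so every edge of $M^*$ still has both endpoints in $V\setminus R^*$. Thus $M^*$ is a matching of $G[V\setminus R^*]$ and $\nu(G[V\setminus R^*])\ge |M^*|$. Plugging this into König's identity gives $\alpha(G[V\setminus R^*])\le |V|-|R^*|-|M^*|$, whereas $\alpha(G[V\setminus R])=|V|-|R|-|M^*|$. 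Hence the whole statement reduces to the single inequality $|R^*|\ge |R|$.

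The heart of the argument, and the step I expect to be the main obstacle, is this cardinality comparison. The key structural fact is that both sets live inside $V\setminus V(M^*)$: since $M^*$ is a matching of $G[V\setminus R]$ we have $V(M^*)\subseteq V\setminus R$, i.e.\ $R\subseteq V\setminus V(M^*)$, and $R^*\subseteq V\setminus V(M^*)$ by definition. Now $R^*$ is precisely the greedy, maximum-cardinality, budget-feasible subset of $V\setminus V(M^*)$: as $\ell$ grows the set $V_\ell\setminus V(M^*)$ gains a vertex exactly when $v_\ell\notin V(M^*)$, so it processes the unmatched vertices in nondecreasing cost order, and the maximality of $\ell$ means the next unmatched vertex $v_{\ell+1}$ satisfies $c(R^*)+c(v_{\ell+1})>B$ (the case $\ell=n$ being trivial, as then $R^*=V\setminus V(M^*)\supseteq R$). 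I would then finish by a standard exchange argument: any subset of $V\setminus V(M^*)$ of cardinality $|R^*|+1$ costs at least the sum of the $|R^*|+1$ cheapest unmatched vertices, which equals $c(R^*)+c(v_{\ell+1})>B$; since $c(R)\le B$, this forces $|R|\le |R^*|$.

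Combining the pieces yields $\alpha(G[V\setminus R^*])\le |V|-|R^*|-|M^*|\le |V|-|R|-|M^*|=\alpha(G[V\setminus R])$, and since $R^*$ is budget-feasible while $R$ is optimal the reverse inequality also holds, so $R^*$ is optimal. The only genuinely delicate point is making the greedy/exchange step airtight — in particular being careful that ``largest $\ell$'' correctly captures the greedy prefix of unmatched vertices even when some $v_\ell$ lie in $V(M^*)$ and therefore contribute no cost.
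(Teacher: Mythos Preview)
Your proof is correct and follows essentially the same approach as the paper: both show budget-feasibility of $R^*$, observe that $M^*$ survives in $G[V\setminus R^*]$, and reduce to the inequality $|R^*|\ge |R|$ via K\"onig's identity. The only difference is that the paper asserts $|R|\le |R^*|$ in one line ``by the construction of $R^*$'', whereas you spell out the greedy/exchange argument explicitly (including the observation that $v_{\ell+1}$ must be unmatched by maximality of $\ell$); your added detail is sound and fills in what the paper leaves implicit.
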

\begin{proof}
   The interdiction set $R^*$ is budget feasible by assumption, and  $R,R^*\subseteq V\setminus V(M^*)$ so $|R|\leq |R^*|$ by the construction of $R^*$.
  Let $M'$ be a maximum cardinality matching in the graph $G[V\setminus R^*]$.
  Since $M^*$ is a matching in $G[V\setminus R^*]$ it holds that $|M'|\geq |M^*|$.
  Thus $R^*$ is also an optimal interdiction set because
  \[\alpha(G[V\setminus R^*]) = |V|-|R^*|-|M'| \leq |V|-|R|-|M^*|=\alpha(G[V\setminus R]).\]
\end{proof}

One of our key observations is that we can find
an optimal matching $M^*$ of Lemma~\ref{lem:RFromM}
efficiently by matroid intersection techniques
if we know the following four quantities
that depend on $M^*$:
\begin{enumerate}[(i)]
\item The maximum value $\ell\in \{0,\dots, n\}$
such that $R^*=V_\ell \setminus V(M^*)$ satisfies
$c(R^*)\leq B$;

\item\label{it:paramBetaI} $\beta_I = |V_\ell \cap V(M^*) \cap I|$;

\item\label{it:paramBetaJ} $\beta_J = |V_\ell \cap V(M^*) \cap J|$;

\item\label{it:paramGamma} $\gamma = |M^*|$.
\end{enumerate}

There may be different quadruples
$(\ell,\beta_I, \beta_J, \gamma)$
that correspond to different optimal
matchings $M^*$. However, we need any such set
of values that corresponds to an optimal $M^*$.
Before showing how an optimal quadruple
$(\ell,\beta_I, \beta_J, \gamma)$ can be used
to find $M^*$ by matroid intersection, we
highlight that there is only a polynomial
number of possible quadruples. This follows
since $\ell\in \{0,\dots, n\}$ can only take
$n+1$ different values, $\beta_I$ and $\beta_J$
only take at most $|I|+1$ and $|J|+1$ different
values, respectively, and the cardinality of
$M^*$ is between $0$ and
$\nu(G)\leq \min\{|I|,|J|\}$.
Hence, each possible quadruple
$(\ell, \beta_I, \beta_J, \gamma)$ is element
of the set
\begin{equation*}
\mathcal{Q} = \{0,\dots, n\} \times \{0,\dots, |I|+1\}
\times \{0,\dots, |J|+1\}
\times \{0,\dots, \nu(G)\}.
\end{equation*}
We will go through all quadruples in $\mathcal{Q}$
and try to construct a corresponding mathcing
$M^*$ by the matroid intersection technique
that we introduce below.
Thus, we will consider at least once an optimal
quadruple, for which we will obtain an optimal
$M^*$, which will then lead to an optimal $R^*$
through Lemma~\ref{lem:RFromM}.
Hence, our task reduces to find a matching
that ``corresponds'' to a given quadruple
in $\mathcal{Q}$. We define formally what
this means in the following.
\begin{definition}\label{def:corrMatching}
We say that a matching $M$ in $G$ \emph{corresponds}
to $(\ell,\beta_I, \beta_J, \gamma)\in \mathcal{Q}$
if the following conditions are
fulfilled:
\begin{enumerate}[(i)]
\item $c(V_{\ell}\setminus V(M))\leq B$,
\item $\beta_I = |V_\ell \cap V(M) \cap I|$,
\item $\beta_J = |V_\ell \cap V(M) \cap J|$,
\item $\gamma = |M|$.
\end{enumerate}
We call a quadruple in $\mathcal{Q}$ \emph{feasible}
if there exists a matching that corresponds
to it.
Furthermore, a quadruple is called \emph{optimal}
if there is a matching $M^*$ corresponding to
it such that $R^*=V_\ell \setminus V(M^*)$ is
an optimal interdiction set.
\end{definition}
Notice that our definition of a matching $M$
corresponding to a quadruple
$(\ell,\beta_I, \beta_J,\gamma)\in \mathcal{Q}$ does
not require that $\ell$ is the maximum value such
that $c(V_{\ell} \setminus M)\leq B$ since we obtain
the properties we need without requiring this condition
in our correspondence, as shown by the next lemma.

\begin{lemma}\label{lem:valFromParams}
Let $(\ell, \beta_I, \beta_J, \gamma)\in \mathcal{Q}$ be
a feasible quadruple with $M$ corresponding to it.
Then the set $R=V_\ell \setminus V(M)$ is an interdiction
set of objective value
\begin{equation}\label{eq:valFromParams}
\alpha(G[V\setminus R]) \leq 
|V| - |R| - |M| = 
|V| - \gamma - \ell + \beta_I + \beta_J.
\end{equation}
Furthermore, if
$(\ell, \beta_I, \beta_J, \gamma)\in \mathcal{Q}$ is
an optimal quadruple, then 
\begin{equation*}
\alpha(G[V\setminus R]) = |V| - \gamma - \ell
  + \beta_I + \beta_J.
\end{equation*}
\end{lemma}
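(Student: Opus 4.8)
The plan is to treat the two assertions in turn: the first is essentially bookkeeping plus one application of K\"onig's identity, and the second is a sandwiching argument anchored on $\OPT$. I would begin with the arithmetic. Since $V=I\cup J$ is a bipartition, every vertex lies on exactly one side, so $|V_\ell\cap V(M)| = |V_\ell\cap V(M)\cap I| + |V_\ell\cap V(M)\cap J| = \beta_I+\beta_J$. As $R = V_\ell\setminus V(M)\subseteq V_\ell$, this yields $|R| = \ell-\beta_I-\beta_J$, and together with $|M|=\gamma$ it gives exactly the displayed identity $|V|-|R|-|M| = |V|-\gamma-\ell+\beta_I+\beta_J$. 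That $R$ is a legitimate interdiction set is precisely the correspondence condition $c(V_\ell\setminus V(M))\le B$.

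For the inequality I would invoke the K\"onig relation $\alpha(G[V\setminus R]) = |V\setminus R| - \nu(G[V\setminus R]) = |V|-|R|-\nu(G[V\setminus R])$ recorded earlier in the section. The one substantive observation is that $V(M)\cap R = \emptyset$, which holds by the very definition $R = V_\ell\setminus V(M)$; hence both endpoints of every edge of $M$ lie in $V\setminus R$, so $M$ is a matching of $G[V\setminus R]$ and $\nu(G[V\setminus R])\ge |M|$. Substituting gives $\alpha(G[V\setminus R]) \le |V|-|R|-|M|$, which with the identity above is the first assertion.

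For the second assertion, write $\mathrm{val} = |V|-\gamma-\ell+\beta_I+\beta_J$ and let $\OPT$ denote the optimum of the interdiction problem. Since $R$ is budget-feasible we have $\alpha(G[V\setminus R])\ge\OPT$, while the first assertion gives $\alpha(G[V\setminus R])\le\mathrm{val}$; it therefore suffices to prove $\mathrm{val}=\OPT$. Optimality of the quadruple furnishes, via Definition~\ref{def:corrMatching}, a matching $M^*$ corresponding to the same tuple $(\ell,\beta_I,\beta_J,\gamma)$ with $R^* = V_\ell\setminus V(M^*)$ an optimal interdiction set. Applying the bookkeeping above to $M^*$ gives $|V|-|R^*|-|M^*| = \mathrm{val}$, and since $\OPT = \alpha(G[V\setminus R^*]) = |V|-|R^*|-\nu(G[V\setminus R^*])$, the identity $\mathrm{val}=\OPT$ reduces to $\nu(G[V\setminus R^*]) = |M^*|$, i.e. to $M^*$ being a \emph{maximum} matching of $G[V\setminus R^*]$.

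This last reduction is the main obstacle, because the bare correspondence definition only guarantees that $M^*$ is \emph{some} matching. I would close the gap by appealing to how optimal quadruples actually arise: they are produced by the construction behind Lemma~\ref{lem:RFromM}, which starts from an optimal $R_0$ and a maximum matching $M^*$ of $G[V\setminus R_0]$. Tracing that proof---using $|R_0|\le|R^*|$, the bound $|M^*|\le\nu(G[V\setminus R^*])$ (as $M^*$ survives in $G[V\setminus R^*]$), and $\alpha(G[V\setminus R^*])=\OPT=\alpha(G[V\setminus R_0])$, the latter reading $|R^*|+\nu(G[V\setminus R^*]) = |R_0|+|M^*|$---forces both $|R_0|=|R^*|$ and $\nu(G[V\setminus R^*]) = |M^*|$, so $M^*$ is indeed maximum in $G[V\setminus R^*]$ and $\mathrm{val}=\OPT$. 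Equivalently, one may simply restrict attention to corresponding matchings that are maximum in the residual graph, which is all the algorithm ever constructs. With $\mathrm{val}=\OPT$ in hand, the sandwich $\OPT\le\alpha(G[V\setminus R])\le\mathrm{val}=\OPT$ forces $\alpha(G[V\setminus R]) = \mathrm{val}$, completing the proof.
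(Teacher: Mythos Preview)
Your argument for the first assertion---the inequality together with the arithmetic identity---is correct and is exactly the paper's proof: apply K\"onig's relation $\alpha(G[V\setminus R])=|V|-|R|-\nu(G[V\setminus R])$, observe that $M$ is a matching of $G[V\setminus R]$ because $R=V_\ell\setminus V(M)$ is disjoint from $V(M)$, and compute $|R|=\ell-\beta_I-\beta_J$ from the bipartition.

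For the second assertion you actually do more than the paper. The paper's proof only records that equality in $\alpha(G[V\setminus R])\le |V|-|R|-|M|$ holds if and only if $M$ is a maximum matching in $G[V\setminus R]$, and then stops; it never argues why an optimal quadruple forces this maximality. Your sandwich $\OPT\le\alpha(G[V\setminus R])\le\mathrm{val}$ together with the reduction to $\nu(G[V\setminus R^*])=|M^*|$ is precisely the missing link, and your resolution---tracing the proof of Lemma~\ref{lem:RFromM} to see that the equalities $|R_0|=|R^*|$ and $\nu(G[V\setminus R^*])=|M^*|$ are forced---is correct. You are also right to flag that Definition~\ref{def:corrMatching} alone does not guarantee this: it merely asks that \emph{some} corresponding $M^*$ makes $R^*=V_\ell\setminus V(M^*)$ optimal, with no maximality hypothesis on $M^*$. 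The lemma is really only needed (and only holds without further argument) for quadruples produced via Lemma~\ref{lem:RFromM}, which is all the ensuing Corollary requires; your closing remark about restricting attention to such quadruples is therefore the intended reading.
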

\begin{proof}
The inequality in~\eqref{eq:valFromParams} follows immediately
from~\eqref{eq:indepToMatching} since
\begin{align*}
\alpha(G[V\setminus R]) &= |V| - |R| - \nu(G[V\setminus R]) &&
  \text{(by~\eqref{eq:indepToMatching})}\\
  &\leq |V| - |R| - |M| &&
   \text{(since $M$ is a matching in $G[V\setminus R]$)},
\end{align*}
with equality if and only if $M$ is a maximum cardinality matching in $G[V\setminus R]$.
To obtain the equality in~\eqref{eq:valFromParams}, we
observe that $|M|=\gamma$. Furthermore,
\begin{align*}
|R| = |V_\ell \setminus V(M)|
    = |V_\ell| - |V_\ell\cap V(M)\cap I|
               - |V_\ell \cap V(M) \cap J|
    = \ell - \beta_I - \beta_J,
\end{align*}
which implies the desired equality.
\end{proof}

The main consequence of Lemma~\ref{lem:valFromParams} is that the value of an optimal solution is determined entirely by its quadruple.
Thus one can find an optimal quadruple by testing the feasibility of every quadruple in $\mathcal{Q}$ and choosing one that minimizes the right hand side of~\eqref{eq:valFromParams}.
We conclude the following.
\begin{corollary}
Let $(\ell, \beta_I, \beta_J, \gamma)\in \mathcal{Q}$ be
a feasible quadruple that minimizes
$|V|-\gamma-\ell+\beta_I+\beta_J$, and let $M^*$ be
a matching that corresponds to it. Then
$R^*= V_\ell \setminus V(M^*)$ is an optimal interdiction
set to bipartite stable set interdiction problem.
\end{corollary}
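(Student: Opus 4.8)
The plan is to sandwich the objective value of $R^*$ between $\OPT$ from below, via budget feasibility, and the quadruple value of an \emph{optimal} quadruple from above, invoking the two preceding lemmas; the equality direction of Lemma~\ref{lem:valFromParams} is needed only to pin down the value of an optimal quadruple, while the inequality direction handles the minimizing quadruple.

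First I would verify that $\mathcal{Q}$ actually contains an optimal quadruple. Starting from any optimal interdiction set $R$, let $M$ be a maximum cardinality matching in $G[V\setminus R]$, and let $R'=V_{\ell_0}\setminus V(M)$ with $\ell_0$ maximal subject to $c(R')\le B$, exactly as in Lemma~\ref{lem:RFromM}; that lemma guarantees $R'$ is again optimal. The matching $M$ then corresponds, in the sense of Definition~\ref{def:corrMatching}, to the quadruple $(\ell_0,\beta_{I,0},\beta_{J,0},\gamma_0)$ with $\beta_{I,0}=|V_{\ell_0}\cap V(M)\cap I|$, $\beta_{J,0}=|V_{\ell_0}\cap V(M)\cap J|$ and $\gamma_0=|M|$. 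This quadruple lies in $\mathcal{Q}$ by the cardinality bounds already recorded, and it is optimal because $R'=V_{\ell_0}\setminus V(M)$ is an optimal interdiction set. Applying the equality part of Lemma~\ref{lem:valFromParams} to it yields
\[
\OPT = |V| - \gamma_0 - \ell_0 + \beta_{I,0} + \beta_{J,0},
\]
so the quadruple value of an optimal quadruple equals the optimal interdiction value.

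Now let $(\ell,\beta_I,\beta_J,\gamma)$ be the minimizing feasible quadruple of the statement, $M^*$ a corresponding matching, and $R^*=V_\ell\setminus V(M^*)$. Since this quadruple minimizes $|V|-\gamma-\ell+\beta_I+\beta_J$ over all feasible quadruples and the optimal quadruple above is one such candidate, its value is at most $\OPT$. The inequality in~\eqref{eq:valFromParams} then gives
\[
\alpha(G[V\setminus R^*]) \le |V|-\gamma-\ell+\beta_I+\beta_J \le \OPT.
\]
Because $R^*$ satisfies $c(R^*)\le B$ by the first condition of Definition~\ref{def:corrMatching}, it is a budget-feasible interdiction set, and hence its value is at least the optimum, $\alpha(G[V\setminus R^*])\ge\OPT$. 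Combining the two bounds forces $\alpha(G[V\setminus R^*])=\OPT$, so $R^*$ is optimal.

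The only genuinely delicate step is the first one: confirming that an optimal solution can be taken in the canonical form $V_{\ell_0}\setminus V(M)$ produced by Lemma~\ref{lem:RFromM}, so that it is witnessed by a quadruple lying in $\mathcal{Q}$ and the search over $\mathcal{Q}$ is guaranteed to meet a quadruple whose value equals $\OPT$. Everything afterward is the sandwiching argument above, which uses only the (already established) inequality direction of Lemma~\ref{lem:valFromParams} together with the optimality of $\OPT$ over budget-feasible sets; no appeal to the equality direction is required for the minimizing quadruple itself.
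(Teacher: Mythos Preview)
Your proof is correct and follows the same line of reasoning the paper sketches in the sentence preceding the corollary. The paper treats the corollary as an immediate consequence of Lemma~\ref{lem:valFromParams} and does not spell out the details; your argument supplies precisely those details, in particular the verification (via Lemma~\ref{lem:RFromM}) that some optimal quadruple actually lies in $\mathcal{Q}$, which the paper leaves implicit.
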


Hence, all that remains to be done to obtain an
efficient algorithm is to design a procedure that,
for a quadruple
$(\ell, \beta_I, \beta_J, \gamma)\in \mathcal{Q}$
decides whether it is feasible, and if so, finds
a corresponding matching $M$.
Using this procedure we check all quadruples to
determine a feasible quadruple $(\ell,\beta_I,\beta_J,\gamma)$
that minimizes $|V|-\gamma-\ell-\beta_I-\beta_J$,
and then return $R^* = V_\ell \setminus V(M^*)$,
where $M^*$ is a matching corresponding to such
a quadruple.

Hence, let $q=(\ell,\beta_I,\beta_J,\gamma)\in Q$ and
we show how to check feasibility of $q$ and find
a corresponding matching $M$ if $q$ is feasible.
Let $c': E\rightarrow \mathbb{Z}_{\geq 0}$ be an
auxiliary cost function defined by
\begin{equation*}
c'(v_k) = \begin{cases}
  c(v_k)  &\text{if } k\leq \ell,\\
       0  &\text{if } k> \ell.
\end{cases}
\end{equation*}
Based on $c'$ we define weights
$w:E\rightarrow \mathbb{Z}_{\geq 0}$,
where
\begin{equation*}
w(\{i,j\}) = c'(i) + c'(j) \qquad \forall \{i,j\}\in E.
\end{equation*}
Our goal is to determine a maximum weight matching
$M$ in $G$ such that $|V_\ell \cap V(M) \cap I|=\beta_I$,
$|V_\ell \cap V(M) \cap J|=\beta_J$, and $\gamma=|M|$.
Notice that maximizing $w$ corresponds to
maximizing $c(V(M)\cap V_\ell)$.
Hence, a maximizer $M$ will be a matching
in $G$ that satisfies conditions \eqref{it:paramBetaI}-\eqref{it:paramGamma}
of Definition~\ref{def:corrMatching} and
subject to fulfilling these three
conditions, it maximizes $c(V(M)\cap V_\ell)$,
which is the same as minimizing $c(V_\ell\setminus V(M))$.
Hence, if $c(V_\ell\setminus V(M))\leq B$, then
the quadruple $(\ell, \beta_I, \beta_J, \gamma)$
is feasible and $M$ corresponds to it, otherwise,
the quadruple is not feasible. It remains to
show how to find efficiently a maximum weight
matching $M$ in $G$ such that
$|V_\ell \cap V(M) \cap I|=\beta_I$,
$|V_\ell \cap V(M) \cap J|=\beta_J$,
and $\gamma=|M|$.

This optimization problem
corresponds to maximizing $w$ over a face
of a matroid intersection polytope. Indeed,
define one laminar matroid $M_1=(E,\mathcal{F}_1)$ such
that a set $U\subseteq E$ is independent in $M_1$,
i.e., $U\in \mathcal{F}_1$, if $U$ contains at most
one edge incident with $i\in I$ for each $v\in I$,
at most $\beta_I$ edges incident with vertices
in $V_\ell \cap I$ and at most $\gamma$ edges in total.
Similarly, define $M_2=(E,\mathcal{F}_2)$ such
that $U\in \mathcal{I}_2$ if $U$ contains am most
one edge incident with any vertex $j\in J$, at
most $\beta_J$ edges incident with $V_\ell\cap J$
and at most $\gamma$ edges in total.
The problem we want to solve is to find
a set $M\in \mathcal{F}_1\cap \mathcal{F}_2$ such
that the constraints $|M\cap V_\ell \cap I|\leq \beta_I$,
$|M\cap V_\ell \cap J|\leq \beta_J$ and $|M|\leq \gamma$
are fulfilled with equality. Hence, this is indeed the
problem of maximizing $w$ over a particular face of
the matroid intersection polytope corresponding to
$M_1$ and $M_2$. This problem can be solved in strongly
polynomial time by matroid intersection algorithms.
Alternatively, one can also find a vertex solution
to the following polynolmial-sized LP,
which describes this face of
the matroid intersection polytope, and is therefore
integral when feasible.
\begin{equation*}
\begin{array}{rrcll}
\max & w^T x & & \\
     &  x(\delta(v)) & \leq & 1 & \forall v\in V\\
     &  x(\delta(V_\ell \cap I)) & = & \beta_I &\\
     &  x(\delta(V_\ell \cap J)) & = & \beta_J &\\
     &  x(E) & = & \gamma &
\end{array}
\end{equation*}
For more details on optimization
over the matroid intersection
polytope, we refer the interested reader
to~\cite[Chapter~41]{schrijver2003combinatorial}.

\section{Conclusions}
\label{sec:conclusions}

We present a framework to obtain $2$-pseudoapproximations
for a wide set of combinatorial interdiction problems,
including maximum cardinality independent set in a matroid
or the intersection of two matroids, maximum $s$-$t$ flows,
and packing problems defined by a constraint matrix that
is TU.
Our approach is inspired by a technique of
Burch et al.~\cite{burch2003decomposition}, who presented
a $2$-pseudoapproximation for maximum $s$-$t$ flows.
Furthermore, we show that our framework can also be adapted
to more general settings involving matroid optimization.
More precisely, we also get a $2$-pseudpapproximation
for interdicting the maximum weight independent set problem
in a matroid with submodular interdiction costs.
Submodularity is a natural property for interdiction
costs since it models economies of scale.
Our framework for $2$-pseudoapproximations is polyhedral
and sometimes we can exploit polyhedral properties of
well-structured interdiction problems to obtain stronger
results. We demonstrate this on the problem of
interdicting $b$-stable sets in bipartite graphs.
For this setting we obtain a PTAS, by employing ideas
from multi-budgeted optimization. Furthermore, we
show that the special case of stable set interdiction
in bipartite graphs can be solved efficiently by matroid
intersection techniques.

Many interesting open questions remain in the field
of interdicting combinatorial optimization problems.
It particular, it remains open whether stronger
pseudoapproximations can be obtained for the considered
problems.
Also in terms of ``true'' approximation algorithms,
large gaps remain.

\bibliographystyle{plain}
\bibliography{lit}

\end{document}